\documentclass[a4paper,12pt]{article}  
\usepackage{graphicx}
\usepackage{amsmath}
\usepackage{amssymb}
\usepackage{enumerate}
\usepackage{amsthm}
\usepackage[T1]{fontenc}
\usepackage[latin1]{inputenc}
\usepackage[round]{natbib}


\newtheorem{thm}{Theorem}[section]
\newtheorem{prop}[thm]{Proposition}
\newtheorem{cor}[thm]{Corollary}
\newtheorem{lemma}[thm]{Lemma}
\theoremstyle{remark}
\newtheorem{remark}[thm]{Remark}
\newtheorem{example}[thm]{Example}
\theoremstyle{definition}

\newcommand{\pr}[2]{\mathbb{P}_{#1}\left(#2\right)}

\newcommand{\E}[2]{\mathbb{E}_{#1}\left[#2\right]}
\newcommand{\indic}[1]{1_{\left\{#1\right\}}}
\newcommand{\abs}[1]{\left\vert#1\right\vert}
\newcommand{\set}[1]{\left\lbrace #1 \right\rbrace}

\date{\today}

\textheight 8.5in
\oddsidemargin 0.15in
\evensidemargin 0.00in
\textwidth 6.15in

\begin{document}

\title{Survival probabilities of autoregressive processes}
\author{\renewcommand{\thefootnote}{\arabic{footnote}}{\sc Christoph Baumgarten}\footnotemark[1]}
\date{\today}

\footnotetext[1]{
Technische Universit\"at Berlin, Institut f\"ur Mathematik, Sekr.\ MA 7-4,
Stra{\ss}e des 17.\ Juni 136, 10623 Berlin, Germany. Email: {\sl baumgart@math.tu-berlin.de}.
}
\maketitle
\begin{abstract}
Given an autoregressive process $X$ of order $p$ (i.e.\ $X_n = a_1 X_{n-1} + \dots + a_p X_{n-p} + Y_n$ where the random variables $Y_1,Y_2,\dots$ are i.i.d.), we study the asymptotic behaviour of the probability that the process does not exceed a constant barrier up to time $N$ (survival or persistence probability). Depending on the coefficients $a_1,\dots,a_p$ and the distribution of $Y_1$, we state conditions under which the survival probability decays polynomially, faster than polynomially or converges to a positive constant. Special emphasis is put on AR(2) processes.  \\ \\
\noindent
{\it AMS 2010 Subject Classification.} 60G15, 60G50, . \\
\noindent
{\it Key words and phrases.} Autoregressive process, boundary crossing probability, one-sided exit problem, persistence probablity, survival probability.\\
\end{abstract}
\newpage

\section{Introduction}
For fixed $p \geq 1$, define $X_{n} = \sum_{k=1}^p a_k X_{n-k} + Y_{n}$, $n \geq 0$ with the convention that $X_n = 0$ for $n \leq 0$. Troughout the paper, we assume that  $(Y_n)_{n \geq 1}$ is a sequence of i.i.d.\ (nondegenerate) random variables. $(X_n)_{n \geq 1}$ is called an autoregressive process of order $p$ (AR($p$)-process). We sometimes refer to the random variables $(Y_n)_{n \ge 1}$ as innovations. Denote by $p_N(x)$ the probability that the process $X$ stays below $x$ until time $N$, i.e.\
\[
   p_N(x) := \pr{}{\sup_{n=1,\dots,N} X_n \leq x}, \quad N \geq 1, x \geq 0.
\]
We refer to $p_N$ as the survival probability up to time $N$, and we write $p_N$ instead of $p_N(0)$ in the sequel. \\
The aim of this paper is to study the asymptotic behaviour of $p_N(x)$ as $N \to \infty$. Sometimes, the problem of determining the asymptotic behaviour of $p_N(x)$ is referred to as one-sided exit or one-sided barrier problem since $p_N(x) = \pr{}{\tau_x > N}$ where $\tau_x := \inf \set{n \geq 0: X_n > x }$. Such asymptotic results are known in a number of special cases such as random walks, integrated random walks, fractional Brownian motion and AR($1$)-processes. The study of survival probabilities is motivated by several applications such as the inviscid Burgers equation (\cite{sinai:1992-a}) or zeros of random polynomials (\cite{dpsz:2002}). We refer to the recent survey of \cite{aurzada-simon:2012}, \cite{li-shao:2004} and \cite{li-shao:2005} for further information, applications and references. For instance, if $X$ is a random walk ($p=1, a_1 = 1$), it holds that $p_N(x) \sim c(x) N^{-1/2}$ if $\E{}{Y_1}=0$ and $\E{}{Y_1^2} = 1$ (see e.g.\ \cite{feller:1971-vol2}). \cite{novikov-kordzakhia:2008} study AR($1$)-processes with $a_1 \in (0,1)$ and show that $p_N(x)$ decays at least exponentially for a large class of distributions. Bounds on the exponential rate of decay for AR($1$)-processes with Gaussian innovations can be found in \cite{aurzada-baumgarten:2011}. Besides, the decay of the survival probability is known for integrated random walks ($p=2, a_1 = 2, a_2 = -1$): if $\E{}{Y_1} = 0$, $\E{}{Y_1^2} \in (0,\infty)$, it holds that $p_N(x) \asymp N^{-1/4}$ (see \cite{dembo-ding-gao:2012} and the references therein). \\
Taken as a whole, very little is known about the decay of $p_N$ for AR processes except in the few cases mentioned above. As noted in \cite{dembo-ding-gao:2012}, this would be of much interest in view of the frequent appearance of AR-processes and survival probabilities in physical and ecomomic models. Here we investigate the behaviour of the survival probability for such processes under various conditions on the distribution of the innovations. Since an AR($p$)-process $X$ can be written as $X_n = \sum_{k=1}^n c_{n-k} Y_k$ where the $(c_n)$ solve the difference equation $c_n = a_1 c_{n-1} + \dots + a_p c_{n-p}$ with suitable inital conditions, we search criteria for the sequence $(c_n)$ that allow us to characterize the survival probability. Specifically, we are interested in the following question for AR($p$)-processes: when is $p_N$ of polynomial order, when does $p_N$ converge to a positive limit and when is the decay faster than any polynomial?  This classification seems natural if one recalls the results for AR($1$)-processes $X_n = \rho X_{n-1} + Y_n$ where $c_n = \rho^n$ for all $n$. In this case, the behaviour of the survival probability ranges from exponential decay for $\rho < 1$, polynomial decay if $\rho = 1$ and $\E{}{Y_1} = 0$ to convergence to a positive constant if $\rho > 1$. \\
As we will see, the sequence $(c_n)$ often has a much more complex form if $p \geq 2$ so that results for AR($1$)-processes generally cannot be extended directly to higher order processes. We will derive criteria that allow for the classification of the asymptotic behaviour of the $p_N$ as above. Particular emphasis is put on AR($2$)-processes.\\
Let us introduce some notation and conventions: If $f,g: \mathbb{R} \to \mathbb{R}$ are two functions, we write $f \precsim g$ $(x \to \infty)$ if $\limsup_{x \to \infty} f(x)/g(x) < \infty$ and $f \asymp g$ if $f \precsim g$ and $g \precsim f$. Moreover, $f \sim g$ $(x \to \infty)$ if $f(x)/g(x) \to 1$ as $x \to \infty$. If $(X_t)_{t \geq 0}$ is a stochastic process, it will often be convenient to write $X(t)$ instead of $X_t$. If $X$ and $Y$ are random variables, we write $X \stackrel{d}{=} Y$ to denote equality in distribution.\\
The remainder of this article is organized as follows. After presenting the main results for AR($2$) processes below, we start some preliminaries on autoregressive processes in Section~\ref{sec:AR-2}. In Section~\ref{sec:exp-bounds}, we state general conditions ensuring that $p_N$ decays exponentially or at least faster than any polynomial. Special emphasis is put on the case that $(c_n)_{n \geq 0}$ is absolutely summable and AR($2$)-processes. We also prove exponential lower bounds for certain classes of AR-processes. We then determine the region where the survival probability decays polynomially for AR($2$)-processes in Section~\ref{sec:poly}, before briefly treating the case that $p_N$ converges to a positive constant in Section~\ref{sec:pos_lim}.
\subsection{Main results for AR($2$) processes}
Let us illustrate our main result when $X$ is AR(2), i.e. $X_n = a_1 X_{n-1} + a_2 X_{n-2} + Y_n$ with $(Y_n)_{n \geq 1}$ i.i.d. Recall that $X_n = \sum_{k=1}^n c_{n-k} Y_k$ for $n \geq 1$. We decompose $\mathbb{R}^2$ into three disjoint regions $C, E$ and $P$ (see Figure~\ref{fig:R_2_decomp}) defined as follows:
\begin{align*}
   C &:= \set{(a_1,a_2) : a_1 \geq 2, a_1^2 + 4a_2 > 0} \cup \set{(a_1,a_2) : a_1 \in (0,2), a_1 + a_2 > 1 } \\
&\quad \cup \set{(a_1,a_2) : a_1^2 + 4 a_2 = 0, a_1 > 2} \cup \set{(a_1,a_2) : a_1 = 0, a_2 > 1},\\
P &:= \set{(a_1,a_2) : a_1 + a_2 = 1, a_2 \in [-1,1]},\\
 E &:= \mathbb{R}^2 \setminus (C \cup P ).
\end{align*}
\begin{figure}[h!] \label{fig:R_2_decomp}  \centering 
 \includegraphics[scale=0.8]{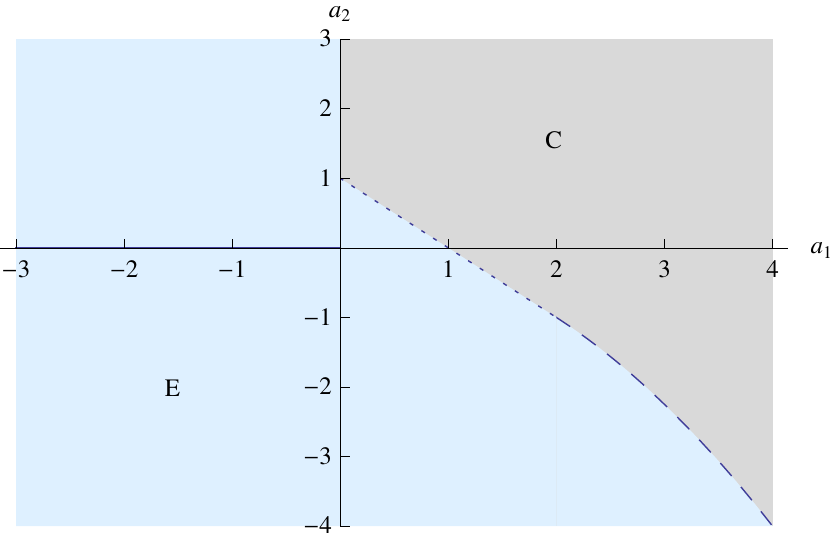}
 \caption{The regions $C$ and $E$. $P$ corresponds to the dotted line. The dashed line is the boundary of $C$ whereas $E$ is open.}
\end{figure}
Depending on the membership of $(a_1,a_2)$ to one of these sets, we can characterize the behaviour of the survival probability under certain conditions on the law of $Y_1$.\\

If $(a_1,a_2) \in P$, the survival probability decays polynomially if $\E{}{Y_1} = 0$ under suitable moment conditions. The choice $a_1 = 2, a_2 = -1$ corresponds to an integrated random walk where $p_N \asymp N^{-1/4}$ if $\E{}{Y_1} = 0$ and $\E{}{Y_1^2} \in (0,\infty)$, see \cite{dembo-ding-gao:2012}. If $a_1 + a_2 = 1$ with $\abs{a_2} < 1$, we will see that $X$ can be seen as a perturbed random walk since $c_n = c + C \epsilon^n$ where $\abs{\epsilon} < 1$. Moreover, $X$ can also be written as an integrated AR(1)-process. The process corresponding to $a_1 = 0,a_2 = 1$ describes two independent random walks such that its survival probability is the square of that of a random walk.
\begin{thm}\label{thm:poly_decay_summary}
   Let $(a_1,a_2) \in P \setminus \set{(2,-1)}$. Assume that $\E{}{Y_1} = 0$ and that $\E{}{e^{ \abs{Y_1}^\alpha} } < \infty$ for some $\alpha > 0$. 
  Then
\[
   p_N = N^{-1/2 + o(1)} \quad (\abs{a_2} < 1), \qquad p_N \asymp N^{-1} \quad (a_2 = 1).
\]
\end{thm}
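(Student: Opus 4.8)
\emph{Reduction via the coefficient sequence.} Because $a_1+a_2=1$, the polynomial $z^2-a_1z-a_2$ has $z=1$ as a root, the second root being $-a_2$, so for $(a_1,a_2)\in P\setminus\set{(2,-1)}$ the roots are $\set{1,-a_2}$ with $\abs{-a_2}<1$, except when $a_2=1$ (which forces $a_1=0$ and roots $\set{1,-1}$). Solving $c_n=a_1c_{n-1}+a_2c_{n-2}$ with $c_0=1$, $c_1=a_1$ gives, for $\abs{a_2}<1$,
\[
  c_n=\frac{1-(-a_2)^{\,n+1}}{1+a_2}=c+C\epsilon^n,\qquad c:=\frac1{1+a_2}>0,\quad C:=\frac{a_2}{1+a_2},\quad \epsilon:=-a_2\in(-1,1),
\]
so that $c_n>0$ for every $n\ge0$ and $X_n=cS_n+P_n$, where $S_n:=\sum_{k=1}^nY_k$ is a mean-zero, finite-variance random walk (finite variance since $\E{}{e^{\abs{Y_1}^\alpha}}<\infty$) and $P_n:=C\sum_{k=1}^n\epsilon^{\,n-k}Y_k$ is a perturbation with $\max_{n\le N}\abs{P_n}\le\kappa\max_{k\le N}\abs{Y_k}$, $\kappa:=\abs{C}/(1-\abs{\epsilon})$. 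For $a_2=1$ one has $c_{2j}=1$, $c_{2j+1}=0$.

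\emph{The case $a_2=1$.} Here $(a_1,a_2)=(0,1)$ and $X_{2j}=Y_2+Y_4+\dots+Y_{2j}$, $X_{2j+1}=Y_1+Y_3+\dots+Y_{2j+1}$, so the even- and odd-indexed subsequences of $X$ are two \emph{independent} mean-zero random walks; hence $p_N$ is the product of their survival probabilities up to times $\lfloor N/2\rfloor$ and $\lceil N/2\rceil$, and the classical estimate $\pr{}{\max_{k\le n}S_k\le0}\asymp n^{-1/2}$ (which uses only $\E{}{Y_1}=0$, $\E{}{Y_1^2}\in(0,\infty)$) gives $p_N\asymp N^{-1}$.

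\emph{The case $\abs{a_2}<1$: upper bound.} With $L=L_N:=(2\log N)^{1/\alpha}$ the moment hypothesis gives $\pr{}{\max_{k\le N}\abs{Y_k}>L}\le N\,e^{-L^\alpha}\E{}{e^{\abs{Y_1}^\alpha}}=O(1/N)$, while on the complementary event $\max_{n\le N}\abs{P_n}\le\kappa L$, so $\set{X_n\le0\ \forall n\le N}$ forces $S_n\le\kappa L/c$ for all $n\le N$. The standard bound $\pr{}{\max_{n\le N}S_n\le x}=O\!\bigl((1+x)N^{-1/2}\bigr)$, uniform in $x\ge0$, then yields $p_N\le O\!\bigl((\log N)^{1/\alpha}\bigr)N^{-1/2}+O(1/N)=N^{-1/2+o(1)}$.

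\emph{The case $\abs{a_2}<1$: lower bound, and the main obstacle.} I would construct a favourable event in two phases. In a \emph{warm-up} of \emph{fixed} length $m_0$, require $Y_1,\dots,Y_{m_0}\in[-2u,-u]$, where $u>0$ is chosen with $\pr{}{Y_1\in[-2u,-u]}>0$ (possible since $\pr{}{Y_1<0}>0$): as every $c_j>0$ this makes $X_n<0$ for $n\le m_0$ and $S_{m_0}\le-um_0$, which, taking $m_0$ a sufficiently large constant, lies comfortably below $-2H(m_0)$ for the slowly growing function $H$ introduced next; this phase has a fixed positive probability. In the \emph{continuation}, work with the walk truncated at the slowly growing level $h(k):=(\log(k+2))^{1/\alpha+1}$, $k>m_0$ (the resulting drift correction is summable, so the truncated walk is still asymptotically mean-zero with variance $\sigma^2$, and $\pr{}{\exists k>m_0:\abs{Y_k}>h(k)}$ is small): on the truncation event $\max_{m_0<n\le N}\abs{P_n}\le2u\kappa+\kappa h(n)$, so $\set{X_n\le0,\ m_0<n\le N}$ is implied by $S_n\le-H(n)$ with $H(n):=(2u\kappa+\kappa h(n))/c=O\!\bigl((\log n)^{1/\alpha+1}\bigr)$. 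Combining the two phases by the Markov property and the monotone coupling (starting lower only helps), $p_N$ is bounded below by a positive constant times the persistence probability of the truncated walk below the boundary $-H(\cdot)$ from time $m_0$ on, the warm-up supplying the head start that places this boundary above the starting point initially. The crux is therefore the moving-boundary persistence estimate for the driving random walk,
\[
  \pr{}{S_n\le-H(n)\ \text{for all }n\le N}\ \ge\ N^{-1/2-o(1)}\qquad\text{when }H(n)=n^{o(1)},
\]
which I expect to be the genuinely hard step. It is exactly the kind of criterion for $(c_n)$ that the general analysis of Section~\ref{sec:poly} supplies; alternatively it follows from known results on first passage of random walks over slowly varying boundaries (the summability $\sum_n H(n)\,n^{-3/2}<\infty$ being met here), and the matching upper bound $\le N^{-1/2+o(1)}$, obtained by comparison with the zero boundary up to the truncation step, is what accounts for the $o(1)$ in the statement. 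Granting this estimate, the reduction gives $p_N\ge N^{-1/2-o(1)}$, which together with the upper bound yields $p_N=N^{-1/2+o(1)}$.
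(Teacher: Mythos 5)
Your treatment of the case $a_2=1$ (two independent random walks) and your upper bound for $\abs{a_2}<1$ coincide with the paper's argument: the paper writes $X_n=\sum_k c_{n-k}Y_k$ with $c_n=(1-(-a_2)^{n+1})/(1+a_2)$, compares with the random walk $S_n$ via $\abs{X_n-cS_n}\le \kappa\max_{k\le N}\abs{Y_k}$, truncates at $(2\log N)^{1/\alpha}$, and uses the estimate $\pr{}{\max_{n\le N}S_n\le f_N}\precsim f_N N^{-1/2}$ (its Lemma~\ref{lem:RW_below_f_N}). The genuine divergence, and the genuine gap, is your lower bound. Its pivotal step, $\pr{}{S_n\le -H(n)\ \forall n\le N}\ge N^{-1/2-o(1)}$ for a slowly growing moving boundary, is not proved but only asserted; appealing to ``the general analysis of Section~\ref{sec:poly}'' is circular, since the theorem under discussion is part of that analysis, and appealing to moving-boundary first-passage results does not apply off the shelf here: after your $k$-dependent truncation the increments are no longer identically distributed nor exactly centered (you note the drift is summable, but the cited results are for i.i.d.\ centered walks), and the truncation event $\set{\abs{Y_k}\le h(k)\ \forall k>m_0}$ has failure probability that is a small \emph{constant}, not $o(N^{-1/2})$, so you cannot simply subtract it from a target probability of order $N^{-1/2}$; you would have to condition on it and then prove the moving-boundary estimate for the conditioned, inhomogeneous walk. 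As written, the hardest half of the statement is therefore left open.

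The paper avoids moving boundaries altogether, and it is worth seeing how. Because all $c_n>0$, it uses association of independent random variables (an FKG-type inequality) to peel off an initial block: $p_N\ge p_{N_0}\cdot\pr{}{\sup_{N_0<n\le N}X_n\le 0}$ with $N_0=\lfloor\log N\rfloor^{2/\alpha}$. With the \emph{uniform} truncation level $(2\log N)^{1/\alpha}$ the perturbation bound is a constant $C(\log N)^{1/\alpha}$ (depending on $N$ but not on $n$), and $\pr{}{E_N^c}=O(N^{-1})=o(N^{-1/2})$ may be subtracted harmlessly; the tail event is then realized by the single requirement $S_{N_0}\le -C(\log N)^{1/\alpha}$ (probability bounded below by the CLT, since $N_0^{1/2}\asymp(\log N)^{1/\alpha}$) together with $\sup_{n>N_0}(S_n-S_{N_0})\le 0$, of probability $\asymp N^{-1/2}$. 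The head factor $p_{N_0}$ is handled by bootstrapping: first the crude bound $\pr{}{Y_1\le 0}^{N_0}$, then feeding the resulting estimate back into the same inequality twice, which yields $p_N\succsim N^{-1/2}(\log N)^{-1/\alpha+o(1)}$. If you want to keep your two-phase scheme, you would need to either prove the moving-boundary persistence bound for the truncated (non-i.i.d.) walk yourself, or restructure the lower bound along the paper's lines, where the boundary is constant and the only probabilistic inputs are the classical $N^{-1/2}$ persistence of a centered random walk, the CLT, and association.
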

Next, we also prove that the survival probability decays faster than any polynomial if $(a_1,a_2) \in E$ under certain conditions on the law of $Y_1$. 
\begin{thm}\label{thm:exp_upper_bound_summary}
   Let $(a_1,a_2) \in E$. Assume that $\pr{}{Y_1 > 0} \in (0,1)$, $\E{}{e^{\abs{Y_1}^\alpha}} < \infty$ for some $\alpha > 0$ and that the characteristic function $\varphi$ of $Y_1$ satisfies $\varphi(t) \to 0$ as $\abs{t} \to \infty$. Then $p_N \precsim \exp(-\lambda N / \log N)$ for some $\lambda = \lambda(a_1,a_2) > 0$.
\end{thm}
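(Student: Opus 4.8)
\emph{Proof idea.} Everything is read off the convolution kernel $(c_n)$ via $X_n=\sum_{k=1}^{n}c_{n-k}Y_k$, where $(c_n)$ solves $c_n=a_1c_{n-1}+a_2c_{n-2}$ with $c_0=1$, $c_1=a_1$. Writing the explicit solution in terms of the roots $\lambda_1,\lambda_2$ of $z^2-a_1z-a_2$, an elementary (if slightly tedious) inspection of the regions shows that $(a_1,a_2)\in E$ falls into exactly two types: \textbf{(I)} $\max(|\lambda_1|,|\lambda_2|)<1$, so that $(c_n)$ is absolutely summable; or \textbf{(II)} $\max(|\lambda_1|,|\lambda_2|)\ge 1$ while the root(s) of maximal modulus are either non-real or real and $\le -1$ — equivalently $(c_n)$ is not eventually of one sign, being bounded and oscillating when the spectral radius equals $1$ and of geometrically growing amplitude when it exceeds $1$. (That $C\cup P$ is precisely the complementary ``simple positive real dominant root'' locus is the first thing to check.) The two types require genuinely different arguments.

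For type (I) I would invoke the general absolutely-summable criterion of Section~\ref{sec:exp-bounds}, whose proof is a blocking argument: replace $X_n$ by the truncation $X_n^{(b)}=\sum_{j=0}^{b-1}c_jY_{n-j}$, so that the $X_n^{(b)}$ indexed along a $2b$-spaced grid in $\{1,\dots,N\}$ are \emph{independent}; on $\{\max_{k\le N}|Y_k|\le T\}$ one has $|X_n-X_n^{(b)}|\le T\sum_{j\ge b}|c_j|$, which is $O(1)$ once $b\gtrsim\log T$; and for each grid point $\mathbb P(X_n^{(b)}>-O(1))\ge q>0$ because $c_0=1$ and $\mathbb P(Y_1>0)\in(0,1)$. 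Balancing the union bound $\mathbb P(\max_{k\le N}|Y_k|>T)\le Ne^{-T^{\alpha}}$ against the independence bound $(1-q)^{N/(2b)}$ forces $T\asymp(N/\log N)^{1/\alpha}$, hence $b\asymp\log N$ and $\asymp N/\log N$ independent blocks; this is exactly where the $\log N$ in the exponent comes from, and it persists as long as the innovations have only Weibull-type tails.

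For type (II) I expect the decay to be exponential. For a root $\lambda$ with $|\lambda|>1$ one has $X_n\sim(\mathrm{const})\cdot\lambda^nW_\infty$ with $W_\infty=\sum_{k\ge1}\lambda^{-k}Y_k$; since $\lambda$ is not a positive real, $\lambda^n$ changes direction, so survival up to $N$ forces $|W_\infty|\lesssim|\lambda|^{-N}\max_{k\le N}|Y_k|$ (together with, possibly, a sign constraint on a lower-order $\sum\lambda_2^{-k}Y_k$, treated in the same vein). Here $\varphi(t)\to0$ is decisive: since $|\varphi|<1/|\lambda|$ eventually, $\prod_{k\ge1}|\varphi(\lambda^{-k}t)|=O(|t|^{-1-\delta})$, so $W_\infty$ has a bounded density and $\mathbb P(|W_\infty|<\varepsilon)\lesssim\varepsilon$; with $\max_{k\le N}|Y_k|\lesssim(\log N)^{1/\alpha}$ (from $\mathbb E[e^{|Y_1|^{\alpha}}]<\infty$) this gives $p_N\lesssim|\lambda|^{-N}\mathrm{poly}(\log N)$. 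When the spectral radius is $1$ the surviving mode is a random-walk term $(-1)^nS_n$ or $2\,\mathrm{Re}(e^{in\theta}W_n)$: survival forces a nondegenerate random walk (resp.\ a pair of them) to remain in a bounded window for $N$ steps — an exponentially rare confinement event — or, more robustly, over a window of fixed length $L=L(\theta)$ one takes a nonnegative combination of the constraints $X_{n+j}\le0$ with weights $w_j\ge0$, $\sum_jw_je^{ij\theta}=0$, annihilating the leading mode and leaving a single fixed half-space constraint $\sum_{i=1}^{L}\gamma_iY_{n+i}\le\kappa$ on the fresh innovations of that window; these events are i.i.d.\ over disjoint windows with probability $<1$ (the relevant coefficient has real part $\tfrac12\ne0$, so $\gamma\ne0$), whence $p_N\le q^{\lfloor N/L\rfloor}$.

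The main obstacle is the bookkeeping in type (II): one must make these estimates hold uniformly and simultaneously for \emph{all} modes present. The awkward configurations are those where a dominant unit-modulus or complex mode coexists with a subdominant mode whose root is real and positive: the ``kill the mode'' device fails for such a mode, which then enters only as a bounded perturbation $\xi_n$, and one has to combine the window/confinement bound with a truncation $\{\max_{n\le N}|\xi_n|\le\mathcal B\}$ and optimise $\mathcal B$ against $\mathbb P(|\xi|>\mathcal B)$ — this is the delicate point and where the moment hypothesis is pushed hardest. The clean part, which I would isolate as a lemma first, is the density bound for $W_\infty$ under $\varphi(t)\to0$: it is what upgrades the otherwise lossy ``small-ball of $W_\infty$'' reduction — note that the concentration function of $Y_1$ alone may tend to $0$ as slowly as $1/\log(1/\delta)$, which by itself would only yield a polynomial bound — into an exponential estimate.
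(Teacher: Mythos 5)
Your plan is built from the same three mechanisms the paper uses, just organized by root configuration instead of by explicit subregions: (a) the truncation/blocking argument for the absolutely summable case is the paper's Theorem~\ref{thm:summable_seq_1} (and that is indeed where the $N/\log N$ comes from); (b) the small-ball/density bound for $W_\infty=\sum\rho^{-k}Y_k$ via integrability of $\prod_k\varphi(\rho^{-k}t)$, combined with the observation that a sign-changing dominant mode turns one-sided constraints into a two-sided confinement of $W_\infty$ at scale $\abs{\rho}^{-N}$ times the (growing) subdominant contribution, is exactly Proposition~\ref{prop:small-dev-of-series-char-fct} and Theorem~\ref{thm:exp_decay_via_char_fct}, used on $E_1$; (c) the ``annihilate the leading mode by a nonnegative linear combination'' device is the paper's Propositions~\ref{prop:NW-boundary} and~\ref{prop:reduction_to_AR_1} (your weights $(1,1)$ for the root $-1$). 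Your dichotomy of $E$ is correct. The one place where you genuinely diverge is the complex-root region $a_1>0$, $a_1^2+4a_2<0$ (and the unit-modulus arcs): the paper never touches the characteristic function there, but uses a sign-change blocking argument (Proposition~\ref{prop:c_k_with_sin_cos}): take the first $q$ with $c_q\le0$, note $a_2c_{q-1}<0$, so survival forces $\sum_{j=1}^{q}c_{q-j}Y_{n-q+j}\le0$ on each window of length $q+1$, and these windows are honestly i.i.d.; this needs only $\pr{}{Y_1>0}>0$ and yields $e^{-\lambda N}$. On the third quadrant the paper simply uses $p_N\le\pr{}{Y_1\le0}^N$.

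Two concrete soft spots in your sketch. First, for a complex dominant root of modulus $>1$ your small-ball route is two-dimensional: $W_\infty$ is complex, ``survival forces $\abs{W_\infty}\lesssim\abs{\lambda}^{-N}\max_k\abs{Y_k}$'' requires several constraint times with well-spread directions (not just two consecutive ones as in the real negative case), and the product bound on $\prod_k\abs{\varphi(\langle t,\lambda^{-k}\rangle)}$ must cope with directions $t$ nearly orthogonal to some $\lambda^{-k}$; this is repairable under $\varphi\to0$ but is not in your sketch, and for $p=2$ it is unnecessary, since killing the conjugate pair (real nonnegative weights) removes \emph{all} modes, or one uses the paper's sign argument. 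Second, your claim that the annihilation leaves ``a single fixed half-space constraint on the fresh innovations of that window,'' i.i.d.\ over windows, is false whenever a positive real subdominant mode is present: on the strip $a_2=1+a_1$, $a_1\in(-1,0)$, the reduced process $X_n+X_{n+1}$ is an AR(1) with coefficient $a_2\in(0,1)$ driven by the whole past, and in $E_1$ with $a_1+a_2>1$ the residual mode even grows geometrically, so calling it a ``bounded perturbation'' is wrong. You do flag this obstacle, and the truncate-and-optimize fix you propose is the right one (it is what Theorem~\ref{thm:exp_decay_via_char_fct} does with its $\hat\beta_Nf_NR_N$ term, while Proposition~\ref{prop:NW-boundary} sidesteps it by reducing to an AR(1) and invoking Theorem~\ref{thm:AR_1_nov-kord}), but as written that step is not yet a proof. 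Finally, in the summable case the per-block bound $\pr{}{X_n^{(b)}\le O(1)}\le 1-q$ must hold with $q$ uniform in $b\asymp\log N$; ``$c_0=1$ and $\pr{}{Y_1>0}\in(0,1)$'' alone does not deliver this uniformity — the paper gets it from convergence in distribution of the truncated series to $Z=\sum_kc_kY_{k+1}$ plus the support fact $\pr{}{Z\le y}<1$ for small $y$ (Lukacs), and some such step is needed in your argument as well.
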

Actually, we can show that $p_N$ decays at least exponentially on most parts of $E$ under various conditions on $Y_1$. The reason for the rapid decay of the survival probability on $E$ can be explained as follows: either $c_n \to 0$ exponentially fast or $(c_n)$ oscillates and diverges to $\pm \infty$.\\
If $(a_1,a_2) \in C$, we will see that $c_n = \exp(\lambda n (1 + o(1))$ for some $\lambda > 0$. One therefore expects that the process stays below a constant barrier at all times with positive probability. This is confirmed by the following theorem:
\begin{thm}\label{thm:pos_lim_summary}
   Let $(a_1,a_2) \in C$. Assume that $\pr{}{Y_1 < 0} > 0$ and $\pr{}{Y_1 \geq x} \precsim (\log x)^{-\alpha}$ as $x \to \infty$ for some $\alpha > 1$. Then it holds that 
\[
   \pr{}{\sup_{n \geq 1} X_n \leq x} = \lim_{N \to \infty} p_N(x) > 0, \quad x \geq 0.
\]
\end{thm}
Note that the assumption $\E{}{Y_1} = 0$ is essential for the polynomial behaviour of $p_N$ if $(a_1,a_2) \in P$. For instance, if $(S_n)_{n \geq 1}$ is a random walk, it is known that the survival probability can decay polynomially or exponentially if $\E{}{S_1} > 0$ (see \cite{doney:1989}) whereas it converges to a positive constant if $\E{}{S_1} < 0$. In contrast, if $(a_1,a_2) \in E \cup C$, the behaviour of $p_N$ is more stable in the sense that Theorem~\ref{thm:pos_lim_summary} and Theorem~\ref{thm:exp_upper_bound_summary} do not rely on the condition $\E{}{Y_1} = 0$.\\
The best results can be obtained if the innovations are Gaussian, where we can actually prove that $p_N$ admits an exponential upper bound for all $(a_1,a_2) \in E$. Summing up, this leads to the following theorem:
\begin{thm}\label{thm:gaussian-AR_2-summary}
If $Y_1$ is Gaussian with zero mean, the following statements hold:
\begin{enumerate}
    \item $\lim_{N \to \infty} p_N = p_\infty > 0$ if and only if $(a_1,a_2) \in C$,
    \item $p_N \sim c N^{-1}$ iff $(a_1,a_2) = (0,1)$ and  $p_N \asymp N^{-1/4}$ iff $(a_1,a_2) = (2,-1)$,
    \item $p_N = N^{-1/2 + o(1)}$ if and only if $(a_1,a_2) \in P$ and $\abs{a_2} < 1$, and
    \item $p_N \precsim e^{-\lambda N}$ for some $\lambda > 0$ if and only if $(a_1,a_2) \in E$.
\end{enumerate}
\end{thm}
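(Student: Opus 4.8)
The plan is to read Theorem~\ref{thm:gaussian-AR_2-summary} off the three general results Theorem~\ref{thm:poly_decay_summary}, Theorem~\ref{thm:exp_upper_bound_summary} and Theorem~\ref{thm:pos_lim_summary}, together with the sharper exponential upper bound for Gaussian innovations on $E$ that will be established in Section~\ref{sec:exp-bounds}, the classical one-sided exit asymptotics for mean-zero finite-variance random walks, and the integrated random walk result of \cite{dembo-ding-gao:2012}. The starting observation is that a centered Gaussian $Y_1$ satisfies \emph{all} the hypotheses occurring in those statements at once: $\E{}{Y_1}=0$; $\E{}{e^{\abs{Y_1}^\alpha}}<\infty$ for every $\alpha\in(0,2)$, hence for some $\alpha>0$; $\pr{}{Y_1>0}=\pr{}{Y_1<0}=1/2\in(0,1)$; the characteristic function of $Y_1$ tends to $0$ at infinity; and the Gaussian tail $\pr{}{Y_1\ge x}$ decays faster than $(\log x)^{-\alpha}$ for every $\alpha>1$. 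Thus each cited theorem applies without further conditions.

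Next I would check the ``if'' direction of each of the four items. If $(a_1,a_2)\in C$, Theorem~\ref{thm:pos_lim_summary} gives $\lim_N p_N=p_\infty>0$, which is item~1. If $(a_1,a_2)=(0,1)$, the recursion $X_n=X_{n-2}+Y_n$ decouples $X$ into two independent random walks, one on the even and one on the odd times, so $p_N$ equals the product of their survival probabilities; combined with the fact that a centered random walk with $\E{}{S_1^2}\in(0,\infty)$ satisfies $\pr{}{\max_{k\le M}S_k\le 0}\sim c\,M^{-1/2}$ this gives $p_N\sim c'N^{-1}$. If $(a_1,a_2)=(2,-1)$, $X$ is the integrated random walk and $p_N\asymp N^{-1/4}$ by \cite{dembo-ding-gao:2012}; these two statements together form item~2. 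If $(a_1,a_2)\in P$ with $\abs{a_2}<1$, Theorem~\ref{thm:poly_decay_summary} gives $p_N=N^{-1/2+o(1)}$, which is item~3. Finally, if $(a_1,a_2)\in E$, the refinement of Section~\ref{sec:exp-bounds} for Gaussian innovations yields $p_N\precsim e^{-\lambda N}$ for some $\lambda=\lambda(a_1,a_2)>0$, strengthening the bound $\exp(-\lambda N/\log N)$ of Theorem~\ref{thm:exp_upper_bound_summary}; this is item~4.

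The ``only if'' directions then come for free by a partition argument. The five sets $C$, $\set{(a_1,a_2)\in P:\abs{a_2}<1}$, $\set{(0,1)}$, $\set{(2,-1)}$ and $E$ are pairwise disjoint and cover $\mathbb{R}^2$: indeed $E=\mathbb{R}^2\setminus(C\cup P)$, both $(0,1)$ and $(2,-1)$ lie in $P$, and removing them from $P$ leaves exactly the points with $\abs{a_2}<1$. The five asymptotic regimes --- a positive limit, $p_N\sim cN^{-1}$, $p_N\asymp N^{-1/4}$, $p_N=N^{-1/2+o(1)}$, and $p_N\precsim e^{-\lambda N}$ --- are pairwise incompatible, since they are distinguished by whether $\lim_N p_N$ is positive or zero and, in the latter case, by the value (respectively $-1$, $-1/4$, $-1/2$, or $-\infty$) of $\lim_N \log p_N/\log N$. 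Hence if $(a_1,a_2)$ does not belong to the region named in a given item, it belongs to one of the other four regions, and the ``if'' direction applied there forces an incompatible regime, ruling out the asymptotics claimed in that item. This closes all four equivalences.

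There is no genuine difficulty at the level of this corollary; the substance lies in the three general theorems. The single point demanding real additional input --- and which I would carry out in Section~\ref{sec:exp-bounds} rather than here --- is upgrading the quasi-exponential bound $\exp(-\lambda N/\log N)$ of Theorem~\ref{thm:exp_upper_bound_summary} to a true exponential bound $p_N\precsim e^{-\lambda N}$ valid on \emph{all} of $E$ in the Gaussian case, where Gaussian comparison (Slepian-type) inequalities enter. This is precisely what makes the clean four-way dichotomy of Theorem~\ref{thm:gaussian-AR_2-summary} available for Gaussian, rather than merely for more general, innovations.
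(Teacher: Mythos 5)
Your proposal is correct and follows essentially the same route as the paper: the paper likewise obtains Theorem~\ref{thm:gaussian-AR_2-summary} by combining Theorems~\ref{thm:poly_decay_summary}, \ref{thm:exp_upper_bound_summary} and \ref{thm:pos_lim_summary} with Lemma~\ref{lem:RW_squared}, the result of \cite{dembo-ding-gao:2012}, the patchwork of exponential upper bounds of Section~\ref{sec:exp-bounds} on $E$, and the implicit partition/mutual-exclusivity argument for the ``only if'' directions. The only nuance is that the genuinely Gaussian (Slepian) input, Proposition~\ref{prop:Gaussian_double_root}, is needed only on the critical parabola $a_1^2+4a_2=0$ with $a_1\in(0,2)$; on the rest of $E$ the true exponential bounds already follow from the non-Gaussian-specific results (trivial bound on $E_2$, Corollary~\ref{cor:north-west-region}, Proposition~\ref{prop:c_k_with_sin_cos}, and the AR(1) reductions), so your phrasing slightly overstates where the comparison argument is required, but this does not affect correctness.
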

The theorems above are mostly corollaries to more general theorems that are also applicable to AR($p$)-processes if $p \geq 3$ (see e.g.\ Theorem~\ref{thm:summable_seq_1} and \ref{thm:exp_decay_via_char_fct} and Proposition~\ref{prop:Gaussian_double_root} and \ref{prop:AR_1_coeff_greater_one} below).  We will indicate possible extensions troughout the article. The main advantage of focussing on AR(2)-processes consists of the fact that we have an explicit solution of the difference equation for the sequence $(c_n)_{n \geq 0}$. For instance, this allows us to explicitly describe the parameters $(a_1,a_2)$ such that $c_n \to 0$.\\
Even for AR($2$)-processes, one is forced to distinguish a variety of cases that require different treatment. It is clear that this becomes much more complicated for processes of higher order. Finally, let us mention that the class of AR($p$)-processes contains $p$-times integrated centered random walks  $S^{(p)}$ as a special case (i.e. $S^{(1)}$ is a centered random walk, and $S^{(p)}_n = \sum_{k=1}^n S^{(p-1)}_k$). Here, the behaviour of the survival probability is not known for $p \geq 3$.

\section{Autoregressive processes}\label{sec:AR-2}
We begin by recalling a few facts about AR($p$)-processes. For fixed $p \geq 1$, define $X_{n} = \sum_{k=1}^p a_k X_{n-k} + Y_{n}$, $n \geq 0$ with the convention that $X_n = 0$ for $n \leq 0$ where $(Y_n)_{n \geq 1}$ is a sequence of i.i.d.\ random variables. One verifies that $X_n = \sum_{k=1}^n c_{n-k} Y_k$ where
\[
   c_n = 0, \quad n < 0, \qquad c_0 = 1, \qquad c_n = \sum_{k=1}^p a_k c_{n-k}, \quad n \geq 1.
\]
In other words, $(c_n)_{n \geq 0}$ solves the linear difference equation 
\begin{align*}
   c_n = a_1 c_{n-1} + \dots a_p c_{n-p}, n \geq p,
\end{align*}
with initial conditions 
\[
   c_0 = 1, \quad c_1 = a_1 c_0, \quad c_2 = a_1 c_1 + a_2 c_0, \quad \dots, \quad c_{p-1} = a_1 c_{p-2} + \dots + a_{p-1} c_0.
\]
Solving this equation amounts to finding the roots $s_1,\dots,s_p \in \mathbb{C}$ of the characteristic polynomial $f_p(\cdot)$, given by $f_p(x) := x^p - \sum_{k=1}^p a_k x^{p-k}, x \in \mathbb{R} \label{def:char_pol}$. If $p=2$, the roots $s_1, s_2$ of $f_2(\lambda) = \lambda^2 - a_1 \lambda - a_2$ are given by
\begin{equation}\label{eq:sol_diff_eq_2}
   s_1 := (a_1 + h)/2, \quad s_2 := (a_1-h)/2, \quad h := \sqrt{a_1^2 + 4a_2} \in \mathbb{C}.
\end{equation}
Taking into account the inital conditions $c_0=1,c_1= a_1$, one can show that 
\begin{equation}\label{eq:sol_diff_eq}
   c_n = \begin{cases}
            h^{-1} \, \left( s_1^{n+1} - s_2^{n+1} \right) , &\quad n \geq 0, \quad a_1^2 + 4 a_2 \neq 0,\\
	    \left( a_1/2 \right)^n ( n  + 1), &\quad n \geq 0, \quad a_1^2 + 4 a_2 = 0.
         \end{cases}
\end{equation}
If $a_1^2 + 4a_2 < 0$, writing $s_1 = r e^{i\varphi}$ and $s_2= r e^{-i\varphi}$ in polar form, elementary manipulations show that the solution is given by
\begin{equation}\label{eq:sol_diff_eq_sin_cos}
  c_n = \abs{a_2}^{n/2} \left( \cos(n\varphi) + \frac{a_1}{\tilde{h}} \, \sin(n\varphi) \right),
\end{equation}
where
\[
   \tilde{h} = \sqrt{-(a_1^2 + 4a_2)}, \quad 
\varphi = \begin{cases} \arctan(\tilde{h}/a_1) \in (0,\pi/2), &\quad a_1 >0,\\
             \pi/2, &\quad a_1 = 0,\\
	     \pi + \arctan(\tilde{h}/a_1) \in (\pi/2,\pi), &\quad a_1 < 0.
          \end{cases}
\]
\begin{remark}\label{rem:c_n_to_zero}
   It holds that $c_n \to 0$ if and only if $\max \set{\abs{s_1},\abs{s_2}} < 1$ which is equivalent to the conditions
\[
   a_1 + a_2 < 1, \quad a_2 < 1 + a_1, \quad a_2 > -1,
\]
see Theorem~2.37 of \cite{elaydi:1999}.
\begin{figure}[h!]\label{fig:c_n_to_zero}  \centering 
 \includegraphics[scale=0.5]{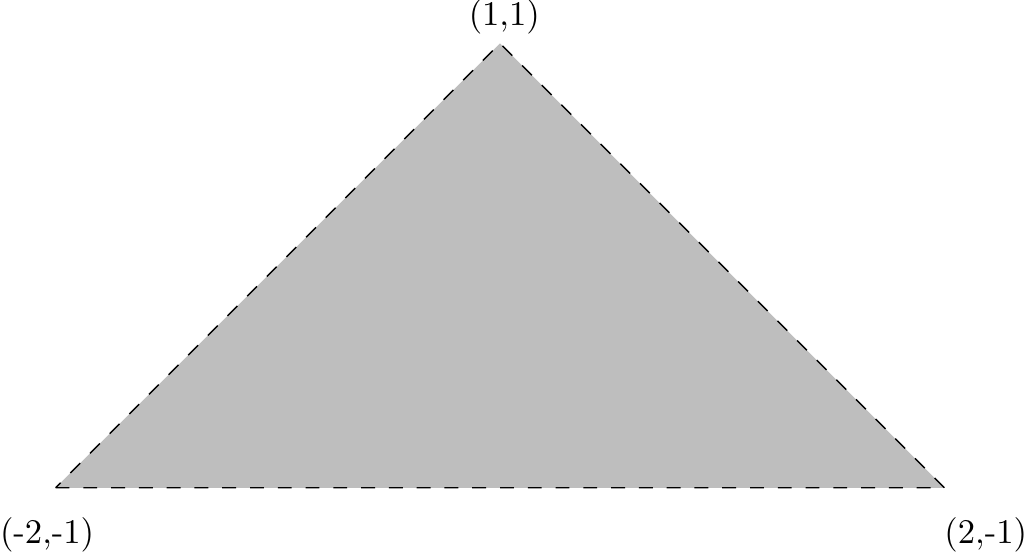} 
 \caption{The region of parameters $(a_1,a_2)$ where $c_n \to 0$} 
\end{figure}
\end{remark}
\begin{remark}
Note that the convention that $X_n = 0$ for $n < 0$ is not standard to define autoregressive processes. It is often customary to define AR($p$)-processes as follows, see e.g.\ Chapter~3 in \cite{brockwell-davis:1987}: If $(Y_n)_{n \in \mathbb{Z}}$ is a sequence of i.i.d. random variables, $X = (X_n)_{n \in \mathbb{Z}}$ is AR($p$) if
\[
   X_n = a_1 X_{n-1} + \dots + a_p X_{n-p} + Y_n, \quad n \in \mathbb{Z}.
\]
Moreover, $X$ is called causal if there exists a deterministic sequence $(c_n)_{n \ge 0}$ with $\sum \abs{c_n} < \infty$ such that $X_n = \sum_{k=0}^\infty c_n Y_{n-k}$.\\
By Theorem~3.1.1 of \cite{brockwell-davis:1987}, $X$ is causal if and only if the polynomial $p(z) = 1 - a_1 z - \dots - a_p z^p$ has no zeros in $\set{ z \in \mathbb{C} : \abs{z} \le 1}$. In that case, the coefficients $c_n$ are determined by the following relation $\sum_{k=0}^\infty c_k z^k = 1/p(z)$ for $\abs{z} \le 1$. Equating the coefficients of $z^k$, one easily verifies (or see Section~3.3 in \cite{brockwell-davis:1987}) that the sequence $(c_n)_{n \ge 0}$ satisfies the same recursion equation with the same initial conditions as above. Hence, it $X$ is a causal AR($p$)-process, we can decompose it for $n \ge 1$ in the following way:
\begin{align*}
   X_n = \sum_{k=0}^{n-1} c_k Y_{n-k} + \sum_{k=n}^\infty c_k Y_{n-k} = \sum_{k=1}^n c_{n-k} Y_k + \sum_{k=0}^\infty c_{n+k} Y_{-k} = X^{(1)}_n + X^{(2)}_n.
\end{align*}
Note that $X^{(1)}$ and $X^{(2)}$ are independent and that $X^{(1)}$ is an AR($p$)-process in the sense of this article. The term $X^{(2)}$ can be seen as a small perturbation since $\E{}{\abs{X^{(2)}_n}} \le \E{}{\abs{Y_1}} \, \sum_{k=n}^\infty c_k \to 0$.\\
Moreover, the fact that $c_n \to 0$ allows us to apply Theorem~\ref{thm:summable_seq_2} below if $X$ AR($p$) in the sense of Brockwell and Davis. By using the alternative definition above, we can also define autoregressive processes when $c_n$ does not go to zero and we get a much larger class of processes including, for example, random walks.
\end{remark}

We will use different methods to prove certain statements about the survival probability depending on the parameters $(a_1,a_2)$. To this end, set
\begin{align*}
   E_1 &:= \set{ (a_1,a_2) : a_1 < 0, a_2 > 0, a_2 > 1 + a_1}, \quad E_2 := (-\infty,0]^2,\\
  E_3 &:= \set{ (a_1,a_2) : a_1 >0 ,a_1^2 + 4a_2 < 0}. 
\end{align*}
 Figure~\ref{fig:2} will be helpful to visualize the regions that will be considered separately below.\\
\begin{figure}[h!] \label{fig:2}  \centering 
 \includegraphics[scale=1]{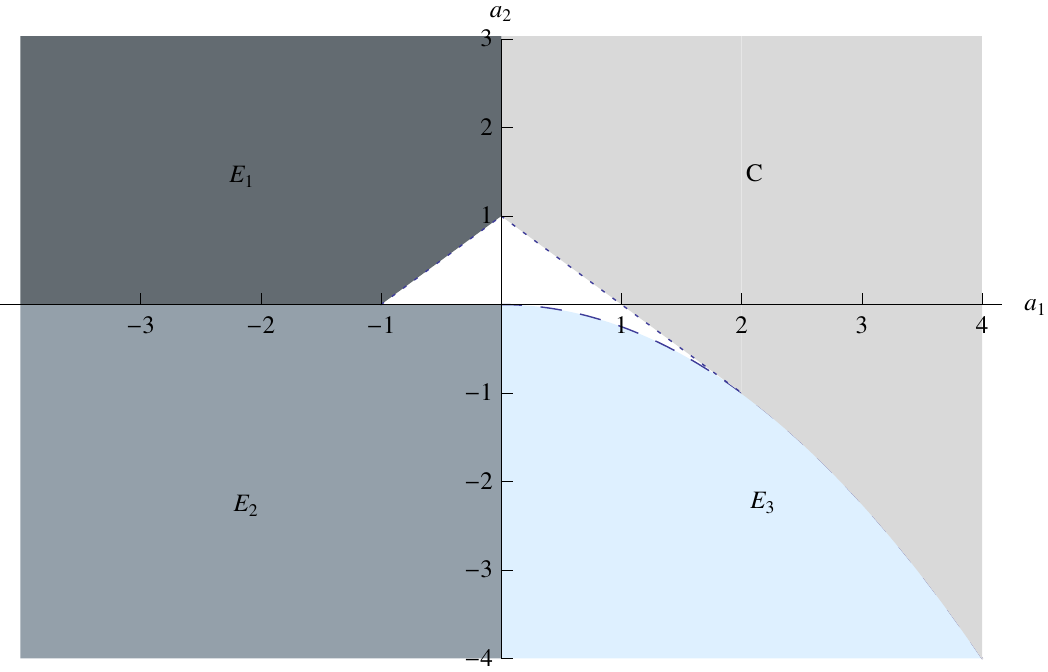} 
 \caption{The regions $E_1,E_2,E_3$ and $C$} 
\end{figure}
Let us also comment briefly on the dependence of the survival probability on the barrier $x$ for AR($p$)-processes. In principle, the behaviour of the survival probability can vary significantly for different barriers. An extreme example is an AR(1)-process $Z_n = \rho Z_{n-1} + Y_n$ where $\rho \in (0,1)$ with $\pr{}{Y_1 = 1} = \pr{}{Y_1 = -1} = 1/2$. It is known that $p_N \precsim \exp(-\lambda N)$ for some $\lambda > 0$ (see Theorem~\ref{thm:AR_1_nov-kord} below), whereas $p_N(x) = 1$ for all $x \geq 1/(1-\rho)$ since $\abs{X_n} = \abs{\sum_{k=1}^n \rho^{n-k} Y_k} \leq \sum_{k=0}^\infty \rho^k = 1/(1-\rho)$.\\
On the other hand, if $c_n \geq \delta > 0$ for all $n \geq 0$ and $\pr{}{Y_1 \leq -\epsilon} > 0$ for some $\epsilon > 0$, one can show that $p_N(x) \asymp p_N$ as $N \to \infty$ for all $x \geq 0$. Indeed, note that if $Y_1 \leq -\epsilon$, it follows that $X_n = c_{n-1} Y_1 + \sum_{k=2}^n c_{n-k} Y_k \leq -\epsilon \delta + \sum_{k=2}^n c_{n-k} Y_k$, so that
\begin{align*}
   p_N = \pr{}{\sup_{n=1,\dots,N} X_n \leq 0} &\geq \pr{}{Y_1 \leq -\epsilon} \pr{}{\sup_{n=2,\dots,N} \sum_{k=2}^n c_{n-k} Y_k \leq \delta \epsilon} \\
&\geq \pr{}{Y_1 \leq -\epsilon} p_N(\delta \epsilon).
\end{align*}
Iteration shows that $p_N \geq \pr{}{Y_1 \leq -\epsilon}^L p_N(L \delta \epsilon)$ for $L = 1,\dots,N$. Hence, if $x \geq 0$, take $L$ with $L \delta \epsilon \geq x$ to get that $\pr{}{Y_1 \leq -\epsilon}^L p_N(x) \leq p_N \leq p_N(x)$ for all $N$ large enough.
\section{Exponential bounds}\label{sec:exp-bounds}
\subsection{Exponential upper bounds}
Let us begin with a trivial observation: If $a_1 \leq 0, \dots, a_p \leq 0$, we have that
\[
   \pr{}{\sup_{n=1,\dots,N} X_n \leq 0} \leq \pr{}{Y_1 \leq 0}^N,
\]
since $X_1 \leq 0, \dots, X_n \leq 0$ implies that $Y_k \leq -a_1 X_{k-1} - \dots - a_p X_{k-p} \leq 0$ for all $k=1,\dots,n$. If $p=2$, this shows that $p_N$ decays exponentially on $E_2$, see Figure~\ref{fig:2}.\\
As we will see in the sequel, exponential decay of $p_N$ occurs for two differnt reasons: first, if $c_n \to 0$ and second, if $(c_n)_{n \geq 0}$ oscillates and diverges exponentially fast. \\
Let us first consider the case that $c_n$ goes to zero. Recall that for AR($1$)-processes $(Z_n)_{n \geq 1}$ with $Z_n = \rho Z_{n-1} + Y_n$ for $\rho \in (0,1)$, $c_n = \rho^n \to 0$ and $p_N$ decays exponentially under mild assumptions on the distribution of $Y_1$ by  
Theorem~1 of \cite{novikov-kordzakhia:2008}:
\begin{thm}\label{thm:AR_1_nov-kord}
   Let $0 < \rho < 1$, $x \geq 0$ and assume that $\E{}{(Y_1^-)^\delta} < \infty$ for some $\delta \in (0,1)$ and $\pr{}{Y_1 > x(1-\rho)} > 0$. Then $\E{}{\exp(\alpha \tau_x)} < \infty$ for some $\alpha > 0$.
\end{thm}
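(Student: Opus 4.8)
The plan is to prove the equivalent statement that $p_N(x)=\pr{}{\tau_x>N}$ decays exponentially; indeed $\E{}{e^{\alpha\tau_x}}<\infty$ for some $\alpha>0$ is the same as a geometric bound $p_N(x)\precsim e^{-\lambda N}$ for some $\lambda>0$ (one direction is Markov's inequality, the other is $\E{}{e^{\alpha\tau_x}}=\sum_{n\ge1}e^{\alpha n}\pr{}{\tau_x=n}\le\sum_{n\ge1}e^{\alpha n}\pr{}{\tau_x>n-1}<\infty$ whenever $\alpha<\lambda$). Throughout write $X_n=\sum_{k=1}^n\rho^{n-k}Y_k$.

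\emph{Escape mechanism.} Since $\pr{}{Y_1>x(1-\rho)}>0$, the essential supremum of $Y_1$ exceeds $x(1-\rho)$, so a suitable \emph{fixed} run of near-maximal innovations together with a small geometric tail forces $\zeta^{(m)}:=\sum_{i=1}^m\rho^{m-i}Y_i$ to exceed $x+\varepsilon_0$ with probability at least $p_0$, where $\varepsilon_0,p_0>0$ are constants that do \emph{not} depend on $m$ once $m$ exceeds some fixed $J$ (fix $J$ so $\rho^{J}$ is negligible; require $Y_1,\dots,Y_J$ within $\eta$ of the essential supremum of $Y_1$, a fixed positive-probability event, and $\sum_{i>J}\rho^{i-1}Y_i^-$ small). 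Hence, for a window length $m$ to be chosen below with $m$ large relative to $M$, from \emph{any} state $z\ge -M$ one has $\pr{z}{X_m>x}\ge p_0$, because $X_m=\rho^m z+\zeta^{(m)}_{\mathrm{fresh}}$ and $\rho^m z\ge-\rho^m M\ge-\varepsilon_0$.

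\emph{Depth control and the Lyapunov function.} Using $\E{}{(Y_1^-)^\delta}<\infty$ and subadditivity of $t\mapsto t^\kappa$ with $\kappa:=\delta\in(0,1)$, the variable $R:=\sum_{j\ge 0}\rho^jW_j$ ($W_j$ i.i.d., $W_j\stackrel d=Y_1^-$) has $\E{}{R^\kappa}\le \E{}{(Y_1^-)^\kappa}/(1-\rho^\kappa)=:\bar B<\infty$, and $-X_n\le\sum_j\rho^jY_{n-j}^-\preceq R$, $(\zeta^{(m)})^-\preceq R$ uniformly. Pass to the sub-sampled AR($1$) chain $\widetilde X_k:=X_{km}$, with coefficient $\widetilde\rho:=\rho^m$ and innovations distributed like $\zeta^{(m)}$, and let $K$ be its kernel killed at the first exit above $x$. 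With $M\ge 0$ large and a ratio $\beta>1$ to be chosen, set
\[
   W(z):=1\ \ (z\in[-M,x]),\qquad W(z):=\beta\,(\abs{z}/M)^{\kappa}\ \ (z<-M).
\]
The escape estimate gives $KW(z)\le(1-p_0)+\beta\mu$ for $z\in[-M,x]$, and the moment estimate gives $KW(z)\le1+\widetilde\rho^{\kappa}W(z)+\beta M^{-\kappa}\bar B$ for $z<-M$, where $\mu:=\widetilde\rho^{\kappa}+M^{-\kappa}\bar B$. Taking $s^{-1}:=1-p_0/2$, both estimates yield $KW\le s^{-1}W$ as soon as $\tfrac{1}{s^{-1}-\mu}\le\beta\le\tfrac{p_0/2}{\mu}$, and such $\beta$ exists once $\mu$ is small. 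Choosing $m$ of order $\log M$ (the escape window length) makes $\widetilde\rho^{\kappa}=\rho^{m\kappa}$ and $M^{-\kappa}\bar B$ both vanish as $M\to\infty$, while $p_0$ stays fixed, so $M$ (hence $m$ and $\beta$) can be fixed appropriately.

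\emph{Conclusion.} From $KW\le s^{-1}W$ the process $s^{n}W(\widetilde X_n)\indic{\widetilde\tau_x>n}$, with $\widetilde\tau_x$ the exit time of $\widetilde X$ above $x$, is a supermartingale, so $\pr{}{\widetilde\tau_x>n}\le\E{}{W(\widetilde X_n)\indic{\widetilde\tau_x>n}}\le s^{-n}W(0)=s^{-n}$; since $\{\tau_x>N\}\subseteq\{\widetilde\tau_x>\lfloor N/m\rfloor\}$ this gives $p_N(x)\le s\,e^{-(\log s)N/m}$, which is the desired bound. The step I expect to be the main obstacle is the escape mechanism: an upcrossing of $x$ typically needs a whole window of favourable innovations, and the point is to bound its probability \emph{below by a constant independent of the window length} — a bound of the form $q^m$ would be useless here, as it would be overwhelmed by the contribution of the rare deep excursions and no $\beta$ could satisfy both Lyapunov inequalities. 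The second delicate point is the glueing of $W$: returning from just below $-M$ into $[-M,x]$ must register as a strict decrease (forcing $\beta>1$), while dropping from $[-M,x]$ far below must not inflate $KW$ (bounding $\beta$ above), and these are compatible only because the latter event has probability $O(M^{-\kappa})$ whereas the escape probability stays bounded below.
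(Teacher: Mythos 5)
The paper never proves this statement: Theorem~\ref{thm:AR_1_nov-kord} is quoted as Theorem~1 of \cite{novikov-kordzakhia:2008} and then used as a black box (in Proposition~\ref{prop:gen_of_nov-kord}, Proposition~\ref{prop:NW-boundary} and the corollaries). Your argument is therefore necessarily a different route: a self-contained Foster--Lyapunov proof for the sub-sampled chain $\widetilde X_k := X_{km}$, combining a window-escape probability $p_0$ that is uniform over the starting point in $[-M,x]$ with a $\kappa$-moment Lyapunov function to handle deep negative excursions, whereas Novikov and Kordzakhia argue via martingale identities for first passage times of AR(1) sequences. Your proof checks out: the reduction of $\E{}{e^{\alpha\tau_x}}<\infty$ to a geometric bound on $p_N(x)$ is correct; the two drift inequalities $KW\le(1-p_0)+\beta\mu$ on $[-M,x]$ and $KW\le 1+\widetilde\rho^{\kappa}W+\beta M^{-\kappa}\bar B$ below $-M$ follow exactly as you indicate from $(\widetilde X_1)^-\le\rho^m z^- + (\zeta^{(m)})^-$, subadditivity of $t\mapsto t^\kappa$ and $\E{}{((\zeta^{(m)})^-)^{\kappa}}\le\bar B$; the window $\tfrac{1}{s^{-1}-\mu}\le\beta\le\tfrac{p_0/2}{\mu}$ is nonempty once $\mu$ is small, which $m\asymp\log M$ delivers (and this same choice guarantees $\rho^m M\le\varepsilon_0$) while $p_0$, fixed by $J$ alone, is untouched; the supermartingale conclusion and the inclusion $\set{\tau_x>N}\subseteq\set{\widetilde\tau_x>\lfloor N/m\rfloor}$ are sound since $W\ge 1$ on $(-\infty,x]$ and $W(0)=1$. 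Two wording repairs are needed but are cosmetic: in $\zeta^{(m)}=\sum_{i=1}^m\rho^{m-i}Y_i$ the non-negligible weights sit on the \emph{last} $J$ innovations, not on $Y_1,\dots,Y_J$ (reverse the index, as your ``geometric tail'' phrase suggests you intend), and ``within $\eta$ of the essential supremum'' should read ``above a fixed level $y_0>x(1-\rho)$ with $\pr{}{Y_1>y_0}>0$'', which exists by hypothesis, covers an infinite essential supremum, and is all that is needed since then $y_0(1-\rho^J)/(1-\rho)-\rho^J T>x+\varepsilon_0$ for $J$ large. What your route buys is a self-contained, quantitative bound $p_N(x)\precsim e^{-(\log s)N/m}$ with explicit dependence on $p_0$ and the window length; what the paper's citation buys is brevity and the packaged exponential-moment statement it needs downstream.
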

We now state a similar weaker result that provides a simple criterion for AR($p$)-processes to ensure that $p_N$ decays faster to zero than any polynomial.
\begin{thm}\label{thm:summable_seq_1}
    Let $(c_k)_{k \geq 0}$ denote a sequence with $c_0 = 1$ and $A := \sum_{k=0}^\infty \abs{c_k} < \infty$ such that $\sum_{k=q}^\infty \abs{c_k} \leq C e^{-\lambda q}$ for every $q \geq 1$ ($C,\lambda > 0$ constants). Assume that there is $\delta > 0$ with $\pr{}{Y_1 < -\delta} > 0$ and $\pr{}{Y_1 > \delta} > 0$ and that $\E{}{Y_1^2} < \infty$. Let $X_n = \sum_{k=1}^n c_{n-k} Y_k$. 
Then for $x \in [0,\delta A)$, there is $c(x) > 0$ such that
 \[
    p_N(x) \precsim \exp \left( -c(x) \, \sqrt{N} \right), \quad N \to \infty.
 \]
Moreover, if $\E{}{\exp(\abs{Y_1}^\alpha)} < \infty$ $(\alpha > 0)$ and $x \in [0, \delta A)$, there is $c(x) > 0$ such that
 \[
    p_N(x) \precsim \exp \left( -c(x) \, N / \log N \right), \quad N \to \infty.
 \]
 \end{thm}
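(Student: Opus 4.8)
The plan is to split $\{1,\dots,N\}$ into $L:=\lfloor N/m\rfloor$ consecutive blocks of length $m$, where $m=m(N)\to\infty$ is optimised at the end, using that (i) the innovations of a single block alone push the process above $x$ with a probability $\rho>0$ not depending on $m$, while (ii) the exponential tail bound on $(c_k)$ makes the contribution of the earlier innovations to $X_{jm}$ a negligible perturbation. Write $\mathcal{F}_t:=\sigma(Y_1,\dots,Y_t)$. Since $x<\delta A$, fix $\theta>0$ with $x+2\theta<\delta A$; since $\delta\sum_{l=0}^{M-1}\abs{c_l}\uparrow\delta A$ and $\sum_{l\ge M}c_l^2\le\bigl(\sum_{l\ge M}\abs{c_l}\bigr)^2\le C^2e^{-2\lambda M}$, fix $M\ge1$ so large that $\delta\sum_{l=0}^{M-1}\abs{c_l}\ge x+2\theta$ and $\E{}{Y_1^2}\,C^2e^{-2\lambda M}\le\theta^2/2$. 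For $j\ge1$ and $m>M$ decompose
\[
 X_{jm}=W_j+V_j,\qquad W_j:=\sum_{k=1}^{(j-1)m}c_{jm-k}Y_k,\qquad V_j:=\sum_{l=0}^{m-1}c_l\,Y_{(j-1)m+1+l},
\]
so that $W_j$ is $\mathcal{F}_{(j-1)m}$-measurable and uses only coefficients $c_l$ with $l\ge m$, while $V_j$ is independent of $\mathcal{F}_{(j-1)m}$ with $V_j\stackrel{d}{=}V_1$.

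\textbf{Step 1: a single block escapes with probability at least $\rho$.} Split $V_1=V_1'+V_1''$ along $0\le l<M$ and $M\le l\le m-1$; the two parts are independent. On the event that $Y_{l+1}>\delta$ whenever $c_l>0$ and $Y_{l+1}<-\delta$ whenever $c_l<0$, for $l=0,\dots,M-1$ --- which has probability at least $\rho_1:=\bigl(\pr{}{Y_1>\delta}\wedge\pr{}{Y_1<-\delta}\bigr)^M>0$ --- one has $V_1'>\delta\sum_{l<M}\abs{c_l}\ge x+2\theta$; moreover $\E{}{(V_1'')^2}\le\E{}{Y_1^2}\sum_{l\ge M}c_l^2\le\theta^2/2$, so $\pr{}{\abs{V_1''}\le\theta}\ge1/2$ by Chebyshev. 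Hence $\rho:=\pr{}{V_1>x+\theta}\ge\rho_1/2>0$, and $\rho$ does not depend on $m$ (for $m>M$).

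\textbf{Step 2: the recursion.} Conditioning on $\mathcal{F}_{(j-1)m}$ and using the independence of $V_j$,
\[
 \pr{}{X_{jm}\le x\mid\mathcal{F}_{(j-1)m}}=\pr{}{V_1\le x-w}\big|_{w=W_j}\le(1-\rho)+\indic{W_j<-\theta},
\]
since $\pr{}{V_1\le x-w}\le\pr{}{V_1\le x+\theta}\le1-\rho$ once $w\ge-\theta$. Put $a_L:=\pr{}{\bigcap_{j=1}^L\{X_{jm}\le x\}}$, so $p_N(x)\le a_L$ because $Lm\le N$, and $\beta_m:=\max_{1\le j\le L}\pr{}{\abs{W_j}>\theta}$. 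Peeling off the last block and using $\prod_{j<L}\indic{X_{jm}\le x}\le1$,
\[
 a_L\le(1-\rho)\,a_{L-1}+\pr{}{W_L<-\theta}\le(1-\rho)\,a_{L-1}+\beta_m,\qquad\text{hence}\qquad p_N(x)\le(1-\rho)^L+\beta_m/\rho .
\]
It is crucial here to absorb the dependence between the blocks by this one-step conditioning rather than by a union bound over the interference events $\{\abs{W_j}>\theta\}$, which would only give the much weaker $L\beta_m$.

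\textbf{Step 3: tail of $W_j$ and optimisation.} Because $W_j$ uses only coefficients of index $\ge m$ we have $\E{}{W_j^2}\le\E{}{Y_1^2}\sum_{l\ge m}c_l^2\le\E{}{Y_1^2}C^2e^{-2\lambda m}$, so Chebyshev gives $\beta_m\le\E{}{Y_1^2}C^2e^{-2\lambda m}/\theta^2$. Writing $\vartheta:=-\log(1-\rho)>0$, so that $(1-\rho)^L\le(1-\rho)^{-1}e^{-\vartheta N/m}$, the choice $m\asymp\sqrt N$ balances the two terms and yields $p_N(x)\precsim\exp(-c(x)\sqrt N)$ $(N\to\infty)$. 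If moreover $\E{}{\exp(\abs{Y_1}^\alpha)}<\infty$, truncate each $Y_k$ at a level $T=T_N$: by Markov's inequality the event that $\abs{Y_k}>T$ for some $k\le N$ has probability at most $\E{}{\exp(\abs{Y_1}^\alpha)}Ne^{-T^\alpha}$, and off this event $W_j$ is a sum of independent variables taking values in intervals of length $2\abs{c_l}T$ and having mean $O(e^{-\lambda m})$, so Hoeffding's inequality gives $\beta_m\le 2\exp\!\bigl(-\kappa\,e^{2\lambda m}/T^2\bigr)+\E{}{\exp(\abs{Y_1}^\alpha)}Ne^{-T^\alpha}$ for some $\kappa=\kappa(\theta,C)>0$. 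Taking $m=\gamma\log N$ with $\gamma$ a sufficiently large constant (of order $(\tfrac1\alpha+\tfrac12)/\lambda$) and then $T_N$ of order $(N/\log N)^{1/\alpha}$ makes each of $e^{-\vartheta N/m}$, $\exp(-\kappa e^{2\lambda m}/T^2)$ and $Ne^{-T^\alpha}$ bounded by $\exp(-c(x)N/\log N)$ for $N$ large, which is the second assertion. The principal obstacle is the infinite memory of $X$, which correlates the blocks and is handled by the one-step conditioning of Step~2; a secondary technical point is the joint choice of the block length $m\asymp\log N$ and the truncation level $T_N$ so that both the sub-Gaussian term and the heavy-tail term in the bound for $\beta_m$ stay below $\exp(-c(x)N/\log N)$, which is exactly what forces $\gamma$ to be of order at least $(1/\alpha+1/2)/\lambda$.
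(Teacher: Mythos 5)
Your proof is correct, and while it follows the same blocking skeleton as the paper (sample the process at multiples of a block length $m$, with $m\asymp\sqrt N$ under second moments and $m\asymp\log N$ under exponential moments, and exploit the exponential summability of $(c_k)$ to make the pre-block contribution negligible), two of your key sub-steps are genuinely different. First, where the paper obtains the per-block bound $\pr{}{\sum_{k=0}^{q}c_kY_{k+1}\le x+\epsilon}\le\rho<1$ by convergence in distribution to the infinite series $Z=\sum_{k\ge 0}c_kY_{k+1}$ together with Theorem~3.7.5 of Lukacs (which guarantees $\pr{}{Z\le y}<1$ for $y<\delta A$), you prove the uniform escape probability $\pr{}{V_1>x+\theta}\ge\rho_1/2$ by an elementary construction: force the first $M$ innovations into the directions of the signs of $c_0,\dots,c_{M-1}$ and control the remainder of the block by a second-moment bound; this makes the argument self-contained and makes explicit why the threshold $\delta A$ appears. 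Second, where the paper controls the interference of the old innovations by a union bound over all times (the term $h_N(u)$ bounding $\sup_n\abs{X_n-Z_{q,n}}$), you only control the process at block endpoints and absorb the dependence through the one-step conditional recursion $a_L\le(1-\rho)a_{L-1}+\beta_m$, yielding the cleaner bound $p_N(x)\le(1-\rho)^L+\beta_m/\rho$; together with truncation plus Hoeffding in the exponential-moment case (in place of the paper's bound via $\sup_k\abs{Y_k}$), this gives the same rates $\exp(-c\sqrt N)$ and $\exp(-cN/\log N)$. One small correction: since the theorem does not assume $\E{}{Y_1}=0$, the inequalities $\E{}{(V_1'')^2}\le\E{}{Y_1^2}\sum_{l\ge M}c_l^2$ and $\E{}{W_j^2}\le\E{}{Y_1^2}\sum_{l\ge m}c_l^2$ are not valid as stated (the cross terms carry $(\E{}{Y_1})^2$); replace them by $\E{}{(V_1'')^2}\le\E{}{Y_1^2}\bigl(\sum_{l\ge M}\abs{c_l}\bigr)^2$ and likewise for $W_j$, which still give the bounds $\theta^2/2$ and $\E{}{Y_1^2}C^2e^{-2\lambda m}$ that you actually use, so nothing downstream changes.
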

 \begin{proof}
   For $q \geq 1$, define $Z_{q,n} = \sum_{k=n-q}^n c_{n-k} Y_k$ for $n \geq q+1$. Note that $Z_{q,n}$ is measurable w.r.t.\ $\sigma(Y_{n-q},\dots,Y_n)$ which implies that $(Z_{q,n(q+1)})_{n \geq 1}$ defines a sequence of i.i.d.\ random variables with $Z_{q,q+1} = X_{q+1}$. We will show that $Z_{q,n}$ is a good approximation of $X_n$ if $q$ is large. We then obtain an estimate on $p_N(x)$ by computing the survival probability of the independent r.v.\ $(Z_{q,(q+1)n})_{n \ge 1}$.\\
First, observe that
\begin{align}
 \pr{}{ \sup_{n=q+2,\dots,N} \abs{X_n - Z_{q,n}} > u} &\le \sum_{n=q+2}^N \pr{}{ \abs{\sum_{k=1}^{n-q-1} c_{n-k} Y_k} > u} \notag \\
&= \sum_{n=q+2}^N \pr{}{ \abs{\sum_{k=q+1}^{n-1} c_k Y_k} > u} =: h_N(u).
\end{align}
In the first equality, we have used that the $Y_k$ are i.i.d., and therefore exchangeable.\\
Hence, 
\begin{align}
   \pr{}{\sup_{n=1,\dots,N} X_n \leq x} &\leq \pr{}{\sup_{n=q+2,\dots,N} Z_{q,n} \leq x + \epsilon} + h_N(\epsilon) \notag \\
&\leq \pr{}{\sup_{n=1,\dots,\lfloor N / (q+1) \rfloor } Z_{q,n(q+1)} \leq x + \epsilon} + h_N(\epsilon) \notag \\
&= \pr{}{ Z_{1,q+1} \leq x + \epsilon }^{\lfloor N/(q+1) \rfloor} + h_N(\epsilon), \label{eq:proof_exp_upper_bound}
\end{align}
where we have used the fact that $(Z_{q,n(q+1)})_{n \ge 1}$ is an i.i.d.\ sequence.
Since the $(Y_n)$ are i.i.d.\ (and therefore exchangeable), we get for $y \in \mathbb{R}$ that
\begin{equation}\label{eq:proof_exp_decay_conv_distr}
   \pr{}{Z_{1,q+1} \leq y} = \pr{}{\sum_{k=0}^{q} c_k Y_{k+1} \leq y} \to \pr{}{\sum_{k=0}^\infty c_k Y_{k+1} \leq y}, \quad q \to \infty,
\end{equation}
since the series $\sum_{k=0}^\infty c_k Y_{k+1} =: Z$ converges a.s.\ by Kolmogorov's Three Series Theorem since $\E{}{Y_1^2} < \infty$. Next, $\pr{}{Z \leq y} < 1$ for every $0 \le y < \delta A$ by Theorem~3.7.5 of \cite{lukacs:1970}. Then for $0 \le y < \delta A$, by \eqref{eq:proof_exp_decay_conv_distr}, there is $\rho = \rho(y)  < 1$ such that $\pr{}{Z_{1,q + 1} \leq y} \leq \rho$ for all $q$ sufficiently large.\\
Moreover, using first Chebychev's inequality and our assumptions on the sequence $(c_n)_{n \ge 0}$, we obtain that
\begin{align}
   h_N(u) &\le u^{-1} \sum_{n=q+2}^N \sum_{k=q+1}^{n-1} \abs{c_k} \E{}{\abs{Y_1}} \le u^{-1} \E{}{\abs{Y_1}} \sum_{n=q+2}^N  \sum_{k=q+1}^\infty \abs{c_k} \notag \\
&\le C N u^{-1} \E{}{\abs{Y_1}} e^{-\lambda q} = C_1 N u^{-1} e^{-\lambda q}. \label{eq:bound_h_N}
\end{align}
Let $q = q_N := \lfloor \beta \sqrt{N} \rfloor$, $\beta > 0$. If $u>0$ is such that $x+u < A$, we deduce from \eqref{eq:proof_exp_upper_bound} and \eqref{eq:bound_h_N} that
\begin{align*}
   p_N(x) \le \rho^{\sqrt{N} / \beta} + C_1 N u^{-1} e^{-\lambda \beta \sqrt{N} + \lambda}.
\end{align*} 
By choosing $\beta$ sufficiently large, the theorem follows under the assumption $\E{}{Y_1^2} < \infty$. \\
If $\E{}{\exp(\abs{Y_1}^\alpha)} < \infty$, the estimate on $h_N$ can be improved as follows: 
\[
   \sup_{n=q+1,\dots,N} \abs{ \sum_{k=q+1}^{n} c_k Y_k } \le \sup_{l=q+1,\dots,N} \abs{Y_k} \sum_{k=q+1}^{\infty} \abs{c_k} \le C e^{-\lambda q} \sup_{l=q+1,\dots,N} \abs{Y_k}. 
\]
Hence,
\begin{align*}
   h_N(u) &\le \sum_{n=q+1}^N \pr{}{ \sup_{k=q+1,\dots,N} \abs{Y_k} > e^{\lambda q} u /C } \le N^2 \pr{}{ \abs{Y_1} > e^{\lambda q} u / C} \\
&\le N^2 \exp \left( -e^{\alpha \lambda q} (u / C)^\alpha  \right) \E{}{\exp(\abs{Y_1}^\alpha)}.
\end{align*}
In particular, with $q = q_N = \lfloor \kappa \log N \rfloor$, if $\kappa$ is large enough, this implies together with \eqref{eq:proof_exp_upper_bound} that, for some $c(x) > 0$,
\[
   p_N(x) \precsim N^2 e^{-N^2} + \rho^{\lfloor N/(q_N + 1) \rfloor} \precsim \exp(-c(x) N / \log N), \quad N \to \infty.
\]
\end{proof}
The proof of Theorem~\ref{thm:summable_seq_1} reveals that fast decay of $p_N$ be explained intuitively as follows: if we write $X_n = \sum_{k=1}^{n-q-1} c_{n-k} Y_k + \sum_{k=n-q}^n c_{n-k} Y_k$, the first summand is typically small if $q$ is large and $c_n \to 0$. Hence, 
\[
   \pr{}{\sup_{n=1,\dots,N} X_n \le )} \approx \pr{}{\sup_{n=q+1,\dots,N} \sum_{k=n-q}^n c_{n-k} Y_k \le 0 } \approx \pr{}{ \sum_{k=1}^{q+1} c_{n-k} Y_k \le 0}^{N/q}.
\]
\begin{remark}
   \label{rem:c_n_summable_bounded_innov}
If $(c_k)_{k \geq 0}$ denote a sequence with $c_0 = 1$ and $\sum_{k=0}^\infty \abs{c_k} < \infty$ and $\abs{Y_1} \leq M$ a.s.\ for some $M < \infty$, one can prove in an analogous way that even $p_N \precsim \exp(-c N)$ for some $c > 0$ since $h_N(u)$ in the proof of Theorem~\ref{thm:summable_seq_1} vanishes for $q$ large enough..
\end{remark}
\begin{remark}
   \label{rem:pos_c_n_simplification}
As it was already remarked by \cite{novikov-kordzakhia:2008}, if $(c_k)_{k \geq 0}$ denotes a sequence of positive numbers, one has that
\[
   X_n = \sum_{k=1}^n c_{n-k} Y_k \ge \sum_{k=1}^n c_{n-k} Y_k \indic{Y_k \le M} = \sum_{k=1}^n c_{n-k} \tilde{Y}_k =: \tilde{X}_n,
\]
such that $\pr{}{X_n \le x, \forall n \le N} \le \pr{}{\tilde{X}_n \le x, \forall n \le N}$. Hence, if the $c_n$ are positive, one can assume without loss of generality that the innovations are bounded from above in order to establish an upper bound on the survival probability.
\end{remark}
For AR($2$)-processes, Theorem~\ref{thm:summable_seq_1} is applicable if $a_1 + a_2 < 1$, $a_2 < a_1 + 1$ and $a_2 > -1$, cf.\ Remark \ref{rem:c_n_to_zero} and Figure~\ref{fig:c_n_to_zero}. Moreover, the preceding theorem can be generalized easily to cover more general processes (such as autoregressive moving average models ARMA(p,q) and moving average processes of infinte order MA($\infty$), see Section~3 in \cite{brockwell-davis:1987}):
\begin{thm}\label{thm:summable_seq_2}
   Let $(c_k)_{k \in \mathbb{Z}}$ denote a sequence with $c_ 0 = 1$, $A := \sum_{k = - \infty}^\infty \abs{c_k} < \infty$ and $\sum_{\abs{k} \geq q} \abs{c_k} \leq C e^{-\lambda q}$ for all $q \geq 1$ and some $\lambda > 0$. Let $(Y_k)_{k \in \mathbb{Z}}$ be a sequence of i.i.d.\ random variables such that $\E{}{Y_1^2} < \infty$ and $\pr{}{Y_1 > \delta} > 0$ and $\pr{}{Y_1 < -\delta} > 0$ for some $\delta > 0$. Let $X_n := \sum_{k=-\infty}^\infty c_{n-k} Y_k$ for $n \in \mathbb{Z}$. If $x \in [0,\delta A)$, it holds for some $c(x) > 0$ that
\[
   \pr{}{\sup_{\abs{n} \leq N} X_n \leq x} \precsim \exp( -c(x) \sqrt{N} ), \quad N \to \infty.
\]
Moreover, if $\E{}{\exp(\abs{Y_1}^\alpha)} < \infty$ $(\alpha > 0)$ and $x \in [0,\delta A)$, there is $c(x) > 0$ such that
\[
   \pr{}{\sup_{\abs{n} \leq N} X_n \leq x} \precsim \exp(-c(x) N/\log N), \quad N \to \infty.
\]
\end{thm}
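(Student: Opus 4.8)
The plan is to run the blocking argument from the proof of Theorem~\ref{thm:summable_seq_1} with one-sided windows replaced by symmetric ones. For $q \ge 1$ and $n \in \mathbb{Z}$ set $Z_{q,n} := \sum_{k=n-q}^{n+q} c_{n-k} Y_k$, which is $\sigma(Y_{n-q},\dots,Y_{n+q})$-measurable; since the windows $[(2q+1)j-q,(2q+1)j+q]$, $j\in\mathbb{Z}$, are pairwise disjoint, the family $(Z_{q,(2q+1)j})_{j\in\mathbb{Z}}$ is i.i.d., each term distributed like $Z_{q,0}=\sum_{\abs{k}\le q}c_{-k}Y_k\stackrel{d}{=}\sum_{\abs{k}\le q}c_kY_k$ by exchangeability. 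First I would control the approximation error: a union bound over the $2N+1$ indices $\abs{n}\le N$ together with exchangeability of the $(Y_m)$ gives
\[
   \pr{}{\sup_{\abs{n}\le N}\abs{X_n-Z_{q,n}}>u}\le(2N+1)\,\pr{}{\Big\vert\sum_{\abs{j}>q}c_jY_j\Big\vert>u}=:h_N(u),
\]
and since $\sum_{\abs{j}>q}\abs{c_j}\le Ce^{-\lambda q}$ and $\E{}{\abs{Y_1}}<\infty$, Markov's inequality yields $h_N(u)\le C_1Nu^{-1}e^{-\lambda q}$ exactly as in \eqref{eq:bound_h_N}. On the complement of the bad event, $\sup_{\abs{n}\le N}X_n\le x$ forces $\sup_{\abs{n}\le N}Z_{q,n}\le x+\epsilon$, hence $\max_{\abs{j}\le m_N}Z_{q,(2q+1)j}\le x+\epsilon$ with $m_N:=\lfloor N/(2q+1)\rfloor$, so by independence
\[
   \pr{}{\sup_{\abs{n}\le N}X_n\le x}\le\pr{}{Z_{q,0}\le x+\epsilon}^{2m_N+1}+h_N(\epsilon).
\]

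Next I would identify the limiting distribution. As $\sum_{k\in\mathbb{Z}}\abs{c_k}\,\E{}{\abs{Y_1}}<\infty$, the series $Z:=\sum_{k\in\mathbb{Z}}c_kY_k$ converges absolutely a.s.\ (alternatively invoke Kolmogorov's Three Series Theorem), $Z_{q,0}\to Z$ in distribution as $q\to\infty$, and by Theorem~3.7.5 of \cite{lukacs:1970} we have $\pr{}{Z\le y}<1$ for $0\le y<\delta A$. Choosing $\epsilon>0$ with $x+\epsilon<\delta A$, there is therefore $\rho=\rho(x+\epsilon)<1$ with $\pr{}{Z_{q,0}\le x+\epsilon}\le\rho$ for all large $q$. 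Taking $q=q_N:=\lfloor\beta\sqrt N\rfloor$ gives $\pr{}{\sup_{\abs{n}\le N}X_n\le x}\le\rho^{2m_N+1}+C_1N\epsilon^{-1}e^{-\lambda\beta\sqrt N+\lambda}$, which for $\beta$ large enough is $\precsim\exp(-c(x)\sqrt N)$; this proves the first assertion.

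For the sharper bound under $\E{}{\exp(\abs{Y_1}^\alpha)}<\infty$, the one genuine difference from Theorem~\ref{thm:summable_seq_1} is that $X_n$ is now a doubly infinite sum, so the error $X_n-Z_{q,n}=\sum_{\abs{j}>q}c_jY_{n-j}$ involves infinitely many innovations and cannot be dominated by a single supremum of finitely many $\abs{Y_k}$. I would therefore truncate at a second scale by splitting $\sum_{\abs{j}>q}=\sum_{q<\abs{j}\le N}+\sum_{\abs{j}>N}$. The first piece is at most $\big(\sup_{\abs{m}\le 2N}\abs{Y_m}\big)Ce^{-\lambda q}$, whose probability of exceeding $u/2$ is, by Markov applied to $\exp(\abs{Y_1}^\alpha)$, at most $(4N+1)\exp(-e^{\alpha\lambda q}(u/2C)^\alpha)\E{}{\exp(\abs{Y_1}^\alpha)}$; the second piece has mean of absolute value $\le\E{}{\abs{Y_1}}Ce^{-\lambda N}$ and variance $\le\mathrm{Var}(Y_1)C^2e^{-2\lambda N}$ (using $\sum_{\abs{j}>N}c_j^2\le(\sum_{\abs{j}>N}\abs{c_j})^2\le C^2e^{-2\lambda N}$), so Chebyshev bounds its probability of exceeding $u/2$ by $O(Ne^{-2\lambda N})$. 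Combining these and choosing $q=q_N:=\lfloor\kappa\log N\rfloor$ with $\kappa$ large, both contributions to $h_N(\epsilon)$ are $\precsim e^{-N/\log N}$ while $\rho^{2m_N+1}\precsim\exp(-c(x)N/\log N)$, giving the second assertion. I expect no serious obstacle: the only slightly delicate ingredient is this two-scale truncation, while the symmetric blocking, the bookkeeping of the polynomial prefactors $2N+1$ and $4N+1$, and the appeal to Lukacs's theorem and the distributional convergence for the two-sided series $Z$ are all direct analogues of the one-sided argument.
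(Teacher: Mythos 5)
Your proposal is correct and follows essentially the same route as the paper, which itself only sketches this proof as "along the same lines" as Theorem~\ref{thm:summable_seq_1} with the symmetric blocks $Z_{q,n}=\sum_{k=n-q}^{n+q}c_{n-k}Y_k$. Your two-scale truncation of the error term in the exponential-moment case is a sensible way of filling in the one detail (the doubly infinite tail $\sum_{\abs{j}>q}c_jY_{n-j}$) that the paper's sketch leaves implicit, and the rest matches the paper's argument.
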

\begin{proof}
  It is well known that $X_n$ is well defined for every $n \in \mathbb{Z}$ under the given assumtions on the sequence $(c_n)$. The proof is then very similar to that of Theorem~\ref{thm:summable_seq_1}. We define $Z_{q,n} := \sum_{k=n-q}^{n+q} c_{n-k} Y_k$. Note that $(Z_{q,n(2q +1)})_{n \in \mathbb{Z}}$ forms a sequence of i.i.d.\ random variables with $Z_{q,0} = \sum_{k=-q}^q c_{k} Y_k$. The remainder of the proof is along the same lines of the proof of Theorem~\ref{thm:summable_seq_1}.
\end{proof}
In certain special cases, we can improve Theorem~\ref{thm:summable_seq_1}. Namely, if $(c_n)$ is a sequence of positive numbers and $c_n = \rho^n(1 + o(1))$ where $\rho \in (0,1)$, it follows from Theorem~\ref{thm:AR_1_nov-kord} that $p_N$ goes to zero exponentially fast under mild assumptions on $Y_1$:
\begin{prop}\label{prop:gen_of_nov-kord}
   Let $(c_n)_{n \geq 0}$ be a sequence such that $\alpha C \rho^n \leq c_n \leq C \rho^n$ for all $n \geq 0$ where $\rho \in (0,1)$, $0< \alpha < 1$, $C >0$. Assume that $\E{}{(Y_1^-)^\delta}< \infty$ for some $\delta \in (0,1)$. Let $x \geq 0$ be such that $\pr{}{ Y_1 \geq x ( 1 - \rho)/(\alpha C) } > 0$. Let $X_n := \sum_{k=1}^n c_{n-k} Y_k$. Then there is some $\lambda = \lambda(x) > 0$ such that $p_N(x) \precsim \exp(-\lambda N)$. 
\end{prop}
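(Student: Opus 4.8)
The plan is to reduce the claim to the AR($1$)-case and then apply Theorem~\ref{thm:AR_1_nov-kord}. The point of the two-sided bound $\alpha C\rho^n\le c_n\le C\rho^n$ is that $(c_n)$ is comparable to the geometric sequence $(\rho^n)$, which is exactly the coefficient sequence of an AR($1$)-process with parameter $\rho$; the constants $\alpha,C$ will be absorbed into a rescaling of the barrier and of the innovations. Concretely, I would split each innovation as $Y_k=Y_k^+-Y_k^-$ and estimate, using $c_{n-k}\ge\alpha C\rho^{n-k}$ on the nonnegative terms $Y_k^+$ and $c_{n-k}\le C\rho^{n-k}$ on the nonnegative terms $Y_k^-$,
\[
   X_n=\sum_{k=1}^n c_{n-k}Y_k^+-\sum_{k=1}^n c_{n-k}Y_k^- \ge \alpha C\sum_{k=1}^n\rho^{n-k}Y_k^+-C\sum_{k=1}^n\rho^{n-k}Y_k^- = \alpha C\,W_n,
\]
where $W_n:=\sum_{k=1}^n\rho^{n-k}\hat Y_k$ with $\hat Y_k:=Y_k^+-\alpha^{-1}Y_k^-$. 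One checks at once that $(W_n)_{n\ge0}$ (with $W_n=0$ for $n\le0$) is an AR($1$)-process with parameter $\rho\in(0,1)$ and i.i.d.\ innovations $(\hat Y_k)_{k\ge1}$, i.e.\ $W_n=\rho W_{n-1}+\hat Y_n$.

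Set $\bar x:=x/(\alpha C)\ge0$ and $\tau^W_{\bar x}:=\inf\set{n\ge0:W_n>\bar x}$. Since $\alpha C\,W_n\le X_n$, the event $\set{\sup_{n\le N}X_n\le x}$ is contained in $\set{\sup_{n\le N}W_n\le\bar x}$, so
\[
   p_N(x) \le \pr{}{\sup_{n=1,\dots,N}W_n\le\bar x} = \pr{}{\tau^W_{\bar x}>N},
\]
and it remains to apply Theorem~\ref{thm:AR_1_nov-kord} to the AR($1$)-process $W$ with barrier $\bar x$. Its two hypotheses follow from those on $Y_1$: first, $\hat Y_1^-=\alpha^{-1}Y_1^-$, hence $\E{}{(\hat Y_1^-)^\delta}=\alpha^{-\delta}\E{}{(Y_1^-)^\delta}<\infty$; second, $\bar x(1-\rho)=x(1-\rho)/(\alpha C)$, and on $\set{Y_1\ge0}\supseteq\set{Y_1\ge x(1-\rho)/(\alpha C)}$ one has $\hat Y_1=Y_1$, so $\pr{}{Y_1\ge x(1-\rho)/(\alpha C)}>0$ gives $\pr{}{\hat Y_1\ge\bar x(1-\rho)}>0$, which is the required positivity (up to the $\ge$/$>$ distinction discussed below). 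Theorem~\ref{thm:AR_1_nov-kord} then yields $\E{}{\exp(\lambda\tau^W_{\bar x})}<\infty$ for some $\lambda=\lambda(x)>0$, and Markov's inequality gives $p_N(x)\le\pr{}{\tau^W_{\bar x}>N}\le e^{-\lambda N}\,\E{}{\exp(\lambda\tau^W_{\bar x})}$, i.e.\ $p_N(x)\precsim e^{-\lambda N}$.

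Apart from this, everything is bookkeeping; the one point that needs a little care is getting the two inequalities in the sandwich the right way round — the positive part is bounded below via the lower bound on $c_n$, the negative part bounded above via the upper bound on $c_n$ — and checking that the hypotheses of Theorem~\ref{thm:AR_1_nov-kord} genuinely survive the rescaling, in particular the positivity condition (the assumption is stated with ``$\ge$'' while Theorem~\ref{thm:AR_1_nov-kord} asks for ``$>$'', which is harmless unless the law of $Y_1$ is supported on a single point of $[0,x(1-\rho)/(\alpha C)]$, a case one excludes or treats by hand). No estimate is required beyond what Theorem~\ref{thm:AR_1_nov-kord} already provides.
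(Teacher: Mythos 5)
Your proof is correct and follows essentially the same route as the paper: bound $X_n$ from below by a comparison AR($1$)-process with parameter $\rho$ by using the lower bound $\alpha C\rho^n$ on the positive parts and the upper bound $C\rho^n$ on the negative parts of the innovations (your $\hat Y_k$ is just $\alpha^{-1}$ times the paper's truncated innovation $\tilde Y_k$, so $\alpha C W_n$ coincides with the paper's $C Z_n$), and then invoke Theorem~\ref{thm:AR_1_nov-kord}. The minor $\geq$ versus $>$ point you flag is present in the paper's argument as well and is harmless in the same way.
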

\begin{proof}
Define the i.i.d.\ random variables $\tilde{Y}_k := Y_k \indic{Y_k < 0} + \alpha Y_k \indic{Y_k > 0}$, $k \geq 1$. Since $c_k \geq 0$ for all $k$, we obtain that
\begin{align*}
   X_n = \sum_{k=1}^n c_{n-k} Y_k \geq \sum_{k=1}^n C \rho^{n-k} Y_k \indic{Y_k < 0} + \sum_{k=1}^n \alpha C \rho^{n-k} Y_k \indic{Y_k >0} = C \sum_{k=1}^n \rho^{n-k} \tilde{Y}_k =: C Z_n,
\end{align*}
where $Z_n := \rho Z_{n-1} + \tilde{Y}_n$. In particular, we conclude that 
\[
   \pr{}{\sup_{n=1,\dots,N} X_n \leq x} \leq \pr{}{\sup_{n=1,\dots,N} Z_n \leq x/C}.
\]
Now $\pr{}{\tilde{Y}_1 > x(1 - \rho)/C} = \pr{}{ Y_1 \geq x ( 1 - \rho)/(\alpha C) } > 0$ by the choice of $x$. Hence, the result follows from Theorem~1 of \cite{novikov-kordzakhia:2008} (Theorem~\ref{thm:AR_1_nov-kord} above).
\end{proof}
The preceding proposition yields the following corollary for AR($2$)-processes:
\begin{cor}
   Let $a_1 \in (0,2),a_2 < 0$ with $a_1 + a_2 < 1$ and $a_1^2 + 4a_2 > 0$. Assume that $\E{}{(Y_1^-)^\delta} < \infty$ for some $\delta \in (0,1)$ and $\pr{}{Y_1 \geq y} > 0$ for every $y$. For every $x \geq 0$, there is $\lambda = \lambda(x) > 0$ such that $p_N(x) \precsim \exp(-\lambda N)$.
\end{cor}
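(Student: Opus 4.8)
The plan is to verify that the parameter constraints force the coefficient sequence $(c_n)_{n\ge0}$ to be sandwiched between two positive constant multiples of $\rho^n$ with $\rho=s_1\in(0,1)$, and then to quote Proposition~\ref{prop:gen_of_nov-kord} verbatim.

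First I would analyse the roots $s_1=(a_1+h)/2$, $s_2=(a_1-h)/2$, $h=\sqrt{a_1^2+4a_2}$, from \eqref{eq:sol_diff_eq_2}. Since $a_1^2+4a_2>0$, $h$ is real and positive; since $a_2<0$ and $a_1>0$ we have $h<a_1$, so $s_2=(a_1-h)/2>0$ and hence $s_1>s_2>0$. Moreover $s_1<1$ is equivalent, after squaring the inequality $h<2-a_1$ (valid because $a_1<2$), to $a_1^2+4a_2<(2-a_1)^2$, i.e.\ to $a_1+a_2<1$, which is assumed. So $0<s_2<s_1<1$ (consistently with Remark~\ref{rem:c_n_to_zero}). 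Using \eqref{eq:sol_diff_eq}, in this regime $c_n=h^{-1}(s_1^{n+1}-s_2^{n+1})$; writing $r:=s_2/s_1\in(0,1)$ gives $c_n=h^{-1}s_1(1-r^{n+1})s_1^{\,n}$, and since $1-r\le 1-r^{n+1}<1$ for all $n\ge0$ we obtain
\[
  \alpha C\rho^{\,n}\le c_n\le C\rho^{\,n},\qquad n\ge0,
\]
with $\rho:=s_1\in(0,1)$, $C:=h^{-1}s_1>0$ and $\alpha:=1-r\in(0,1)$ (note $\alpha C=h^{-1}(s_1-s_2)=1$, matching $c_0=1$). In particular the $c_n$ are all positive.

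It then remains to invoke Proposition~\ref{prop:gen_of_nov-kord}: the moment hypothesis $\E{}{(Y_1^-)^\delta}<\infty$ for some $\delta\in(0,1)$ is assumed, and the requirement $\pr{}{Y_1\ge x(1-\rho)/(\alpha C)}>0$ holds for every $x\ge0$ because, by assumption, $\pr{}{Y_1\ge y}>0$ for all $y$. The proposition then yields, for every $x\ge0$, a $\lambda=\lambda(x)>0$ with $p_N(x)\precsim\exp(-\lambda N)$, which is the assertion.

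There is no real obstacle here: the corollary is a direct consequence of Proposition~\ref{prop:gen_of_nov-kord}, and the only work is the bookkeeping that turns the four parameter inequalities into the root inequalities $0<s_2<s_1<1$ and hence into the two-sided bound $c_n\asymp\rho^n$. The one point worth stating carefully is why $a_1+a_2<1$ (rather than the full condition of Remark~\ref{rem:c_n_to_zero}) already suffices for $s_1<1$; this is exactly the squaring argument above, which uses $a_1<2$ to legitimise squaring $h<2-a_1$.
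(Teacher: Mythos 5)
Your proposal is correct and follows the paper's own proof: verify $0<s_2<s_1<1$ from the parameter constraints, sandwich $c_n = h^{-1}(s_1^{n+1}-s_2^{n+1})$ between $h^{-1}(s_1-s_2)s_1^n$ and $h^{-1}s_1^{n+1}$, and apply Proposition~\ref{prop:gen_of_nov-kord}. The only difference is that you spell out the bookkeeping (in particular the squaring argument showing $a_1+a_2<1$ gives $s_1<1$) that the paper summarizes as ``it is not hard to check''.
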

\begin{proof}
   It is not hard to check that $0 < s_2 < s_1 < 1$. Hence, $c_n = s_1^n (s_1 - s_2 (s_2/s_1)^n)/h$ and $h^{-1}(s_1 - s_2) s_1^n \leq c_n \leq h^{-1} s_1^{n+1}$ for all $n$. The result follows from Proposition~\ref{prop:gen_of_nov-kord}.
\end{proof}
If $\abs{Y_1} \leq M$ a.s., the preceding corollary is not applicable. However, we already know that $p_N \precsim e^{-c N}$ for some $c > 0$ in that case, see Remark~\ref{rem:c_n_summable_bounded_innov}.\\

Let us now establish exponential upper bounds for $p_N$ for certain distributions if the sequence $(c_n)$ oscillates and diverges exponentially. The proof relies on the following proposition.
\begin{prop}\label{prop:small-dev-of-series-char-fct}
Let $\rho \in (-1,1)$ ($\rho \neq 0$) and set $Z := \sum_{n=1}^\infty \rho^n Y_n$.  Moreover, suppose that $\E{}{\abs{Y_1}^a} < \infty$ for some $a > 0$. Let $\varphi$ denote the characteristic function of $Y_1$ and assume that there are $\delta \in(0,\abs{\rho})$ and $t_0 > 0$ such that $\abs{\varphi(t)} \le \delta$ forall $\abs{t} \ge t_0$. It follows that $\pr{}{\abs{Z} \le \epsilon} \precsim \epsilon$ as $\epsilon \downarrow 0$.
\end{prop}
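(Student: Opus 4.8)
The plan is to show that $Z$ possesses a bounded probability density; once this is known, $\pr{}{\abs{Z}\le\epsilon}=\int_{-\epsilon}^{\epsilon}f_Z(x)\,dx\le 2\epsilon\,\norm{f_Z}_\infty\precsim\epsilon$, which is exactly the assertion. To produce a bounded density it suffices, by Fourier inversion, to prove that the characteristic function $\psi$ of $Z$ lies in $L^1(\mathbb{R})$.

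First I would record that $Z=\sum_{n\ge1}\rho^n Y_n$ is well defined a.s.: the assumption $\E{}{\abs{Y_1}^a}<\infty$ forces $\sum_n\pr{}{\abs{\rho^n Y_n}\ge1}<\infty$, so a truncation together with Kolmogorov's three series theorem gives almost sure convergence of the series. Since the partial sums converge to $Z$ a.s., their characteristic functions converge pointwise, and independence then yields the product formula $\psi(t)=\prod_{n=1}^\infty\varphi(\rho^n t)$ for every $t\in\mathbb{R}$.

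The core of the argument is to count the ``small'' factors in this product. Fix $t$ with $\abs{t}\ge t_0/\abs{\rho}$ and put $k(t):=\lfloor\log(\abs{t}/t_0)/\log(1/\abs{\rho})\rfloor\ge 1$. For every $n\in\{1,\dots,k(t)\}$ one has $\abs{\rho^n t}=\abs{\rho}^n\abs{t}\ge t_0$, hence $\abs{\varphi(\rho^n t)}\le\delta$ by hypothesis; since the remaining factors have modulus $\le 1$, this gives $\abs{\psi(t)}\le\delta^{k(t)}$. A one-line computation, using $k(t)>\log(\abs{t}/t_0)/\log(1/\abs{\rho})-1$ and $\log\delta<0$, turns this into $\abs{\psi(t)}\le\delta^{-1}(\abs{t}/t_0)^{-\gamma}$ with $\gamma:=\log(1/\delta)/\log(1/\abs{\rho})$. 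Because $\delta<\abs{\rho}<1$ we have $\log(1/\delta)>\log(1/\abs{\rho})>0$, so $\gamma>1$; combined with the trivial bound $\abs{\psi}\le 1$ on the bounded set $\{\abs{t}<t_0/\abs{\rho}\}$ this shows $\psi\in L^1(\mathbb{R})$. Fourier inversion then provides the bounded continuous density $f_Z$ with $\norm{f_Z}_\infty\le(2\pi)^{-1}\norm{\psi}_{L^1}$, and the proposition follows as indicated above.

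The only step with any subtlety is the modulus estimate for $\psi$, and specifically the verification that the exponent $\gamma$ exceeds $1$. This is precisely where the quantitative hypothesis $\delta<\abs{\rho}$ (rather than merely $\delta<1$) is used essentially: with only $\delta<1$ one would get $\abs{\psi(t)}\precsim\abs{t}^{-\gamma}$ for some $\gamma>0$, and via the standard smoothing bound $\pr{}{\abs{Z}\le\epsilon}\precsim\epsilon\int_0^{1/\epsilon}\abs{\psi(t)}\,dt$ this would yield only $\pr{}{\abs{Z}\le\epsilon}\precsim\epsilon^{\min(\gamma,1)}$ (up to a logarithmic factor when $\gamma=1$), which is weaker than the sharp rate $\epsilon$. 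Beyond pinning down this exponent, there is no real obstacle.
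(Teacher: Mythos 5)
Your proposal is correct and follows essentially the same route as the paper: bound the characteristic function of $Z$ by $\delta^{N(t)}$ with $N(t)$ the number of indices $n$ for which $\abs{\rho^n t}\ge t_0$, deduce the polynomial bound $\abs{\psi(t)}\precsim \abs{t}^{-\gamma}$ with $\gamma=\log(1/\delta)/\log(1/\abs{\rho})>1$ from $\delta<\abs{\rho}$, and conclude via integrability of $\psi$ and Fourier inversion that $Z$ has a bounded density, whence $\pr{}{\abs{Z}\le\epsilon}\precsim\epsilon$. The only cosmetic difference is that you spell out the three-series verification and the role of the exponent being strictly larger than $1$, which the paper treats more briefly.
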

\begin{proof}
$Z$ is well-defined and its characterisitic function $\tilde{\varphi}$ is given by $\tilde{\varphi}(t) = \prod_{n=1}^\infty \varphi(\rho^n t)$, see e.g.\ Section~3.7 of \cite{lukacs:1970}. Let us show that $\tilde{\varphi}$ is absolutely integrable. If this holds, by Theorem 3.2.2 of \cite{lukacs:1970}, $Z$ admits a continuous density $g$ which is given by 
\[
   g(x) := \frac{1}{2 \pi} \int_{-\infty}^\infty e^{-ixt} \tilde{\varphi}(t) \, dt, \quad x \in \mathbb{R}.
\]
In particular, $g$ is bounded implying that $\pr{}{\abs{Z} \leq \epsilon} \leq C \, \epsilon$ for any $\epsilon \geq 0$. \\
To prove the integrability of $\tilde{\varphi}$, let $\delta$ and $t_0$ be as in the statement of the proposition and note that
\[
	\abs{\tilde{\varphi}(t)} = \prod_{n=1}^{\infty} \abs{\varphi(\rho^n t)} \le \delta^{N(t)},
\]
where $N(t) = \#\set{ n \ge 1: \abs{\rho^n t} \ge t_0 }$. One verifies that $N(t) = \lfloor ( \log(t) - \log(t_0) ) / \log(1/\abs{\rho}) \rfloor$ so that
\begin{align*}
	\abs{\tilde{\varphi}(t)} \le \exp \left( \log \delta \, \left( \frac{\log \abs{t} - \log(t_0)}{ \log(1/\abs{\rho})} - 1 \right) \right) = C \abs{t}^{-\alpha},
\end{align*}
where $C$ depends on $t_0, \rho$ and $\delta$ only and $\alpha := \log(1/\delta)/\log(1/\abs{\rho}) > 1$. This shows that $\abs{\tilde{\varphi}(t)}$ is integrable over $\mathbb{R}$.
\end{proof}
\begin{remark}
Recall that if $X$ has an absolutely continuous distribution, it holds that $\lim_{\abs{t} \to \infty} \E{}{e^{itX}} = 0$, see e.g.\ Section 2.2 in \cite{lukacs:1970}. However, if the distribution of $X$ is purely discrete, $\limsup_{\abs{t} \to \infty} \abs{\E{}{e^{itX}}} = 1$ and in general, it is a very challenging problem to find conditions such that the random series $\sum_{k=1}^\infty \rho^n Y_n$ has a density. This question has attracted a lot of attention for so-called infinite Bernoulli convolutions. We refer to the survey of \cite{peres-solomyak:2000}.
\end{remark}
We can now prove the following theorem.
\begin{thm}\label{thm:exp_decay_via_char_fct}
Let $X_n := \sum_{k=1}^n c_{n-k} Y_k$ where $c_n = d \rho^n + \beta_n r^n$ where $d \neq 0$, $\rho < - 1$ and $\abs{\rho} > \abs{r}$ and $\abs{\beta_n}e^{-\lambda n} \to 0$ as $n \to \infty$ for every $\lambda > 0$. Assume $\E{}{\abs{Y_1}^a} < \infty$ for some $a > 0$. Moreover, suppose that the characteristic function $\varphi$ of $Y_1$ satisfies the inequality $\abs{\varphi(t)} \le \delta < \abs{\rho}$ for all $\abs{t}$ large enough. Then there is a constant $C > 0$ such that for every $x \geq 0$, it holds that
\[
  \liminf_{N \to \infty} - N^{-1} \log \pr{}{\sup_{n=1,\dots,N} X_n \leq x} \geq C.
\]
If $\E{}{\exp(\abs{Y_1}^\alpha)} < \infty$ for some $\alpha > 0$, then 
\[
 C \geq 
 \begin{cases}
 \log \abs{\rho/r}, &\abs{r} > 1, \\
 \log \abs{\rho}, &\quad \text{else}. 
 \end{cases}
\]
\end{thm}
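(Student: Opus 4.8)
The plan is to separate $X_n$ into a dominant oscillating part of order $\abs{\rho}^n$ and a subexponential remainder, to show that persistence up to time $N$ forces a convergent auxiliary series to lie in an interval of length $\asymp\abs{\rho}^{-N}$ (up to lower-order factors), and to estimate that small-ball probability via Proposition~\ref{prop:small-dev-of-series-char-fct}.

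\emph{Step 1.} Using $c_j=d\rho^j+\beta_j r^j$ I would write, for $n\ge1$,
\[ X_n=\sum_{k=1}^n c_{n-k}Y_k=d\rho^n W_n+R_n,\quad W_n:=\sum_{k=1}^n\rho^{-k}Y_k,\quad R_n:=\sum_{k=1}^n\beta_{n-k}r^{n-k}Y_k. \]
Since $\abs{\rho^{-1}}<1$ and $\E{}{\abs{Y_1}^a}<\infty$, the series $W:=\sum_{k\ge1}\rho^{-k}Y_k$ converges a.s., and Proposition~\ref{prop:small-dev-of-series-char-fct} applied to $W$ (which has ratio $\rho^{-1}$) gives that $W$ has a bounded density, hence $\pr{}{\abs{W}\le\epsilon}\precsim\epsilon$ as $\epsilon\downarrow0$.

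\emph{Step 2.} On $\set{\sup_{n\le N}X_n\le x}$ I would use the two \emph{consecutive} times $N-1$ and $N$: from $d\rho^nW_n=X_n-R_n\le x+\abs{R_n}$, dividing by $d\rho^n$ according to its sign gives an upper bound for $W_n$ when $d\rho^n>0$ and a lower bound when $d\rho^n<0$; because $\rho<-1$, exactly one of $N-1,N$ falls in each case, and since $\abs{W_N-W_{N-1}}=\abs{\rho}^{-N}\abs{Y_N}$ these two one-sided bounds combine into
\[ \abs{W_{N-1}}\le c_1\,\abs{\rho}^{-N}\bigl(x+\abs{Y_N}+\abs{R_{N-1}}+\abs{R_N}\bigr)=:\epsilon_N \]
for a constant $c_1=c_1(d,\rho)$, so $p_N(x)\le\pr{}{\abs{W_{N-1}}\le\epsilon_N}$.

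\emph{Step 3.} I would then dispose of the two error sources. Since $\abs{\beta_n}=e^{o(n)}$, the remainder satisfies $\abs{R_n}\le e^{o(n)}\max_{k\le n}\abs{Y_k}$ if $\abs{r}\le1$ and $\abs{R_n}\le\abs{r}^n e^{o(n)}\max_{k\le n}\abs{Y_k}$ if $\abs{r}>1$; on the event $\set{\max_{k\le N}\abs{Y_k}\le e^{\theta N/2}}$ (whose complement has exponentially small probability under $\E{}{\abs{Y_1}^a}<\infty$, and super-exponentially small probability under $\E{}{e^{\abs{Y_1}^\alpha}}<\infty$) this bounds $\epsilon_N$ by $t_N:=c_1\abs{\rho}^{-N}e^{\theta N}$ resp.\ $t_N:=c_1(\abs{r}/\abs{\rho})^N e^{\theta N}$. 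To pass from the density of the full series $W$ to a bound on $\pr{}{\abs{W_{N-1}}\le t_N}$ I would write $W=W_{N-1}+\rho^{-(N-1)}W'$ with $W'\stackrel{d}{=}W$ independent of $W_{N-1}$; since $\abs{\rho}^{N-1}t_N\to\infty$ in both cases, the event $\set{\abs{\rho^{-(N-1)}W'}\le t_N}$ has probability $\to1$, so $\pr{}{\abs{W_{N-1}}\le t_N}\le2\pr{}{\abs{W}\le2t_N}\precsim t_N$ for $N$ large. Putting everything together gives $p_N(x)\precsim e^{-N(\log\abs{\rho}-\theta)}$ when $\abs{r}\le1$ and $p_N(x)\precsim e^{-N(\log\abs{\rho/r}-\theta)}$ when $\abs{r}>1$; the first claim follows for any small fixed $\theta>0$ (note $\log\abs{\rho}>0$ and $\log\abs{\rho/r}>0$), and the bound on $C$ follows by letting $\theta\downarrow0$, which is legitimate under $\E{}{e^{\abs{Y_1}^\alpha}}<\infty$ because the $\max_k\abs{Y_k}$ error is then negligible on every exponential scale.

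\emph{Main obstacle.} The conceptual heart is Step 3's transfer of the small-ball estimate from the full series $W$ (which has a density, via Proposition~\ref{prop:small-dev-of-series-char-fct}) to the finite partial sum $W_{N-1}$, which need not have one (e.g.\ for lattice $Y_1$); the decomposition $W=W_{N-1}+\rho^{-(N-1)}W'$ works precisely because, at the relevant scale $t_N\asymp\abs{\rho}^{-N}e^{\theta N}$, the rescaled independent tail $\rho^{-(N-1)}W'$ is not asymptotically degenerate. The remainder is bookkeeping: tracking the sign pattern of $\rho^n$ in Step 2, and extracting the sharp exponent $\log\abs{\rho}$ versus $\log\abs{\rho/r}$ from the size of $R_n$ in Step 3.
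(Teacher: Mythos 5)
Your proposal is correct and follows essentially the same route as the paper's proof: isolate the dominant alternating part $d\rho^n W_n$ with $W_n=\sum_{k\le n}\rho^{-k}Y_k$, use the sign change of $\rho^n$ at the two consecutive times $N-1,N$ to force $\abs{W_{N-1}}$ into an interval of size $\abs{\rho}^{-N}$ times subexponential corrections, control the remainder and $Y_N$ by truncating $\abs{Y_k}\le f_N$, transfer the small-ball bound from $W_{N-1}$ to the limit $W$ by splitting off the independent tail, and apply Proposition~\ref{prop:small-dev-of-series-char-fct}, with the truncation level tightened under the exponential-moment assumption to recover $C\ge\log\abs{\rho}$ resp.\ $\log\abs{\rho/r}$. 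The only differences are cosmetic: you extract the two-sided bound directly from $X_{N-1},X_N$ (so you avoid the paper's separate inclusion argument for the AR(1) process $Z_n$ and its Case~2 for $\beta_n\equiv0$), and you use $f_N=e^{\theta N/2}$ in both regimes instead of the paper's $\delta^N$ and $N^{2/\alpha}$.
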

\begin{proof}
Assume w.l.o.g.\ that $d=1$ (write $X_n = \sum_{k=1}^n (c_{n-k} / d) (d Y_k)$). Let $\hat{\beta}_n := \sup \set{\abs{\beta_0},\dots,\abs{\beta_n}}$ and $E_N := \set{\abs{Y_1} \leq f_N, \dots, \abs{Y_N} \leq f_N}$ where $1 \leq f_N \to \infty$ is to be specified later. On $E_N$, it holds for $n=1,\dots,N$ that
\begin{align*}
   X_n &= \sum_{k=1}^n c_{n-k} Y_k = \sum_{k=1}^n \rho^{n-k} Y_k + \sum_{k=1}^n \beta_{n-k} r^{n-k} Y_k \\
&\geq \sum_{k=1}^n \rho^{n-k} Y_k - \hat{\beta}_n f_N \sum_{k=1}^n \abs{r}^{n-k} \geq \sum_{k=1}^n \rho^{n-k} Y_k - \hat{\beta}_N f_N \sum_{k=0}^N \abs{r}^{k}.
\end{align*}
\textbf{Case 1:} Consider first the case that $\beta_n \neq 0$ for some $n$. Let $R_N := \sum_{k=0}^N \abs{r}^{k}$. Then
\begin{equation}\label{eq:reduction_to_AR_1}
   p_N(x) \leq \pr{}{E_N^c}+ \pr{}{\sup_{n=1,\dots,N} \sum_{k=1}^n \rho^{n-k} Y_k \leq x + \hat{\beta}_N f_N R_N, E_N}.
\end{equation}
Note that $Z_n := \sum_{k=1}^n \rho^{n-k} Y_k$ is an AR($1$)-process satisfying $Z_n = \rho Z_{n-1} + Y_n$. Let us begin with the following useful observation: if $Z_{N-1} \leq z$ and $Z_N \leq z$ for some large $z > 0$, we have with high probability that $\abs{Z_{N-1}} \leq z$. This will allow us to reduce the estimation of $p_N(x)$ to controlling $\pr{}{\abs{Z_N} \leq z_N}$ where $z_N \to \infty$ as $N \to \infty$. To be precise, note that
\begin{align}
   \set{Z_{N-1} \leq z, Z_N \leq z} &\subseteq \set{ \abs{Z_{N-1}} \leq z} \cup \set{Z_{N-1} < -z, Z_N \leq z} \notag \\
&\subseteq  \set{ \abs{Z_{N-1}} \leq z} \cup \set{Y_N \leq (1- \abs{\rho})z }. \label{eq:proof_char_fct_1}
\end{align}
For the last inclusion, we have used that the event $\set{Z_{N-1} < -z, Z_N \leq z}$ implies that $z \geq Z_N = \rho Z_{N-1} + Y_N \geq - \rho z + Y_N$. Hence, combining this with \eqref{eq:reduction_to_AR_1}, we obtain that
\begin{align}
    p_N(x) &\leq \pr{}{E_N^c}+ \pr{}{Z_{N-1} \leq x + \hat{\beta}_N f_N R_N, Z_N \leq x + \hat{\beta}_N f_N R_N} \notag \\
&\leq \pr{}{E_N^c} + \pr{}{\abs{Z_{N-1}} \leq  x + \hat{\beta}_N f_N R_N} + \pr{}{Y_N \leq (1-\abs{\rho})  (x + \hat{\beta}_N f_N R_N)}.
\label{eq:proof_char_fct}
\end{align}
It remains to estimate the three probabilities above. Clearly,
\begin{equation*}
   \pr{}{E_N^c} = \pr{}{\bigcup_{n=1}^N \set{\abs{Y_N} > f_N}} \leq N \pr{}{\abs{Y_1} > f_N}.
\end{equation*}
Next, since $\abs{\rho} > 1$ and $\hat{\beta}_N \geq \beta > 0$ for some $\beta > 0$ and for all $N \geq N_0$ large enough and $R_N \geq 1$, it follows that
\begin{equation*}
   \pr{}{Y_N \leq (1-\abs{\rho})  (x + \hat{\beta}_N f_N R_N)} \leq \pr{}{\abs{Y_1} \geq (\abs{\rho} - 1) \beta f_N}, \quad N \geq N_0.
\end{equation*}
For large $N$, using the last two inequalities in \eqref{eq:proof_char_fct}, we arrive at
\begin{equation}\label{eq:bound_with_AR_1_and_tail}
p_N(x) \leq (N+1) \pr{}{\abs{Y_1} \geq C_1 f_N} + \pr{}{\abs{Z_{N-1}} \leq 2  \hat{\beta}_N f_N R_N },
\end{equation}
where $C_1 := \min\set{1,(\abs{\rho}-1)\beta}$. Set $\tilde{Z_n} := \rho^{-n}Z_n = \sum_{k=1}^n \rho^{-k} Y_k$. Then
\[
   \pr{}{\abs{Z_{N-1}} \leq 2 \hat{\beta}_N f_N R_N} = \pr{}{\abs{\tilde{Z}_{N-1}} \leq 2 \abs{\rho}^{-(N-1)} \hat{\beta}_N f_N R_N}.
\]
Note that $\tilde{Z}_n$ converges a.s.\ to a random variable $\tilde{Z}_\infty$ by Kolmogorov's Three Series Theorem. Moreover, for $u,v>0$, 
 \begin{align*}
    &\pr{}{\abs{\tilde{Z}_\infty} \leq u+v} \geq \pr{}{\abs{\tilde{Z}_\infty - \tilde{Z}_N} \leq u+v - \abs{\tilde{Z}_N}, \abs{\tilde{Z}_N} \leq u}\\
 & \quad \geq \pr{}{\abs{\tilde{Z}_\infty - \tilde{Z}_N} \leq v, \abs{\tilde{Z}_N} \leq u} = \pr{}{\abs{\tilde{Z}_\infty - \tilde{Z}_N} \leq v}\pr{}{ \abs{\tilde{Z}_N} \leq u}.
 \end{align*}
The last equality follows from the independence of increments of $\tilde{Z}$. Hence,
\[
\pr{}{ \abs{\tilde{Z}_N} \leq u} \le \frac{\pr{}{\abs{\tilde{Z}_\infty} \leq u+v}}{1-\pr{}{\abs{\tilde{Z}_\infty - \tilde{Z}_N} > v}}, \quad u,v>0, N \ge 1.
\]
Using this inequality with $u=v=C_2 \abs{\rho}^{-N} \hat{\beta}_N f_N R_N$, we obtain that
 \begin{align*}
    \pr{}{\abs{\tilde{Z}_{N-1}} \leq 2 \abs{\rho}^{-(N-1)} \hat{\beta}_N f_N R_N} &\leq \frac{\pr{}{\abs{\tilde{Z}_{\infty}} \leq 4 \abs{\rho}^{-(N-1)} \hat{\beta}_N f_N R_N}}{1 - \pr{}{ \abs{ \tilde{Z}_\infty - \tilde{Z}_{N-1} } > 2 \abs{\rho}^{-(N-1)} \hat{\beta}_N f_N R_N }} \\
    &\le  2 \, \pr{}{\abs{\tilde{Z}_{\infty}} \leq 4 \abs{\rho}^{-(N-1)} \hat{\beta}_N f_N R_N}
 \end{align*}
where the last inequality holds for all $N$ sufficiently large in view of the following estimates: Since $R_N \ge 1$, $\hat{\beta}_N \ge \beta > 0$ for large $N$, $\E{}{\abs{Y_1}^a} < \infty$ (w.l.o.g.\ $a \in (0,1)$) and $f_N \to \infty$, we have that
 \begin{align*}
    &\pr{}{ \abs{ \tilde{Z}_\infty - \tilde{Z}_{N-1} } > 2 \abs{\rho}^{-(N-1)} \hat{\beta}_N f_N R_N } = \pr{}{\abs{\sum_{n=N}^\infty \rho^{-n} Y_n}^a > (2 \beta \abs{\rho}^{-(N-1)} f_N)^a } \\
&\le \pr{}{\sum_{n=N}^\infty \abs{\rho}^{-an} \abs{Y_n}^a > (2 \beta \abs{\rho}^{-(N-1)} f_N)^a } \leq \frac{\sum_{n=N}^\infty \abs{\rho}^{-an} \E{}{\abs{Y_1}^a}}{(2 \beta \abs{\rho}^{-(N-1)} f_N)^a} \\
&\leq C_2 \frac{\abs{\rho}^{-aN}}{\abs{\rho}^{-aN} f_N^a} =  C_2 \frac{1}{f_N^a} \to 0.
 \end{align*}
In the first inequality, we have used that $(x+y)^a \le x^a + y^a$ for $x,y \ge 0$ and $a \in (0,1)$.
We have shown that \eqref{eq:bound_with_AR_1_and_tail} implies for all $N$ large enough that
\begin{equation}\label{eq:proof_with_char_fct_end}
p_N(x) \leq (N+1) \pr{}{\abs{Y_1} \geq C_1 f_N} + 2 \, \pr{}{\abs{\tilde{Z}_{\infty}} \leq 2 C_2 \abs{\rho}^{-N} \hat{\beta}_N f_N R_N}.
\end{equation}
If $f_N \to \infty$ is chosen such that $\abs{\rho}^{-N} \hat{\beta}_N f_N R_N \to 0$, we conclude from \eqref{eq:proof_with_char_fct_end} and Proposition~\ref{prop:small-dev-of-series-char-fct} that
\begin{equation}\label{eq:proof_of_exp_decay}
	p_N(x) \leq (N+1)  \pr{}{\abs{Y_1} \geq C_1 f_N} + C_4 \abs{\rho}^{-N} \hat{\beta}_N f_N R_N, \quad N \to \infty.  
\end{equation}
Let us now state the suitable choice for $f_N$. First, recall that by assumption, we have that $\hat{\beta}_N = e^{o(N)}$. \\
Assume first that $\abs{r} \leq 1$. Then $R_N \leq N$. One can set $f_N := \delta^N$ where $1 < \delta < \abs{\rho}$, use Chebychev's inequality (recall that $\E{}{\abs{Y_1}^a} < \infty$) and \eqref{eq:proof_of_exp_decay} to show that
\[
   p_N(x) \precsim N \delta^{-aN} +  \abs{\rho/\delta}^{-N}e^{o(N)} N = e^{o(N)} \, \left( \delta^a \wedge (\abs{\rho}/\delta) \right)^{-N}, \quad N \to \infty.
\]
If $\abs{r} > 1$, $R_N \asymp \abs{r}^N$, take $f_N :=  \delta^N$ where $1 < \delta < \abs{\rho/r}$, and as above, one sees that
\[
   p_N(x) \precsim N \delta^{-aN} +  \abs{\rho/(\delta r)}^{-N}e^{o(N)}  = e^{o(N)} \, \left( \delta^a \wedge (\abs{\rho/(r \delta)} \right)^{-N}, \quad N \to \infty.
\]
If $\E{}{\exp(\abs{Y_1}^\alpha)} < \infty$ for some $\alpha > 0$, it suffices to take $f_N := N^{2/\alpha}$ to obtain 
\[
   p_N(x) \leq (N+1) \E{}{\exp(\abs{Y_1}^\alpha)} \exp(-C_1^\alpha N^2) + C_3 \abs{\rho}^{-N} e^{o(N)} N^{2/\alpha} R_N,
\]
and it is then easy to conclude that $\liminf - N^{-1} p_N(x) \geq - \log(1/\abs{\rho}) = \log(\abs{\rho})$ if $\abs{r} \leq 1$ and $\liminf - N^{-1} p_N(x) \geq \log(\abs{\rho/r})$ if $\abs{r} > 1$. \\
\textbf{Case 2:} Finally, assume that $\beta_n = 0$ for all $n$. Then $X_n = Z_n = \sum_{k=1}^n \rho^{n-k} Y_k$. Let $0 \leq f_N \to \infty$ to be specified later. Clearly, for large $N$,
\begin{align*}
   \pr{}{\sup_{n=1,\dots,N} Z_n \leq x} &\leq \pr{}{Z_{N-1} \leq x, Z_N \leq x} \leq \pr{}{Z_{N-1} \leq f_N, Z_N \leq f_N}\\
&\leq \pr{}{\abs{Z_{N-1}} \leq f_N} + \pr{}{Y_1 \leq (1-\abs{\rho})f_N},
\end{align*}
where we have used \eqref{eq:proof_char_fct_1} in the last inequality. But the last line is just a special case of \eqref{eq:proof_char_fct} with $x=0,\hat{\beta}_N = R_N = 1$, so we can proceed as above.
\end{proof}
We can apply  Theorem~\ref{thm:exp_decay_via_char_fct} to prove that $p_N$ decays exponentially for $(a_1,a_2) \in E_1$, cf.\ Figure~\ref{fig:2}.
\begin{cor}\label{cor:north-west-region}
Let $(a_1,a_2) \in E_1$. Assume that $Y_1$ satisfies the conditions of Theorem~\ref{thm:exp_decay_via_char_fct}. Then there is a constant $C > 0$ such that for every $x \geq 0$, it holds that
\[
  \liminf_{N \to \infty} - N^{-1} \log \pr{}{\sup_{n=1,\dots,N} X_n \leq x} \geq C.
\]
If $\E{}{\exp(\abs{Y_1}^\alpha)} < \infty$ for some $\alpha > 0$, then 
\[
 C \geq 
 \begin{cases}
 \log(\abs{s_2}/s_1), &\quad a_1 + a_2 > 1, \\
 \log \abs{s_2}, &\quad else. 
 \end{cases}
\]
\end{cor}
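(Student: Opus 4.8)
The plan is to deduce the corollary from Theorem~\ref{thm:exp_decay_via_char_fct} by inspecting the explicit solution \eqref{eq:sol_diff_eq} of the difference equation for $(c_n)$ when $(a_1,a_2)\in E_1$. First I would locate the roots $s_1,s_2$. Since $a_2>0$ on $E_1$ we have $a_1^2+4a_2>0$, so $h=\sqrt{a_1^2+4a_2}$ is real with $h>\abs{a_1}$; hence the roots are real, distinct, and satisfy $s_1=(a_1+h)/2>0>s_2=(a_1-h)/2$. The two remaining sign facts are read off from the inequalities defining $E_1$: writing $f_2(\lambda)=(\lambda-s_1)(\lambda-s_2)$, the condition $a_2>1+a_1$ is exactly $f_2(-1)=1+a_1-a_2<0$, i.e.\ $(1+s_1)(1+s_2)<0$, which together with $1+s_1>0$ forces $s_2<-1$; and $a_1<0$ is exactly $s_1+s_2<0$, i.e.\ $\abs{s_2}>s_1$. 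Thus, putting $\rho:=s_2$ and $r:=s_1$, we have $\rho<-1$ and $0<r<\abs{\rho}$.

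Next I would rewrite \eqref{eq:sol_diff_eq} as
\[
   c_n=h^{-1}(s_1^{\,n+1}-s_2^{\,n+1})=(-s_2/h)\,s_2^{\,n}+(s_1/h)\,s_1^{\,n},\qquad n\ge 0,
\]
which is exactly the form $c_n=d\rho^n+\beta_n r^n$ required by Theorem~\ref{thm:exp_decay_via_char_fct}, with $d=-s_2/h\neq 0$, $\rho=s_2<-1$, $r=s_1$, $\abs{\rho}>\abs{r}$, and the constant sequence $\beta_n\equiv s_1/h$, for which $\abs{\beta_n}e^{-\lambda n}\to 0$ for every $\lambda>0$ trivially. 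Since $Y_1$ satisfies the hypotheses of Theorem~\ref{thm:exp_decay_via_char_fct} (the characteristic-function condition now being read with this $\rho$), the theorem applies directly and produces a constant $C>0$ with $\liminf_{N\to\infty}-N^{-1}\log p_N(x)\ge C$ for every $x\ge 0$; this is also consistent with the fact that $E_1\subset E$.

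It remains to identify the explicit rate under the assumption $\E{}{\exp(\abs{Y_1}^\alpha)}<\infty$, which amounts to translating the dichotomy $\abs{r}>1$ versus $\abs{r}\le 1$ of Theorem~\ref{thm:exp_decay_via_char_fct} into a condition on $(a_1,a_2)$. Here $\abs{r}=s_1$, and since $f_2(1)=1-a_1-a_2=(1-s_1)(1-s_2)$ with $1-s_2>0$ (because $s_2<-1$), we get $s_1>1\iff f_2(1)<0\iff a_1+a_2>1$. Hence on $\set{a_1+a_2>1}\cap E_1$ the theorem gives $C\ge\log\abs{\rho/r}=\log(\abs{s_2}/s_1)$, while on the rest of $E_1$ one has $s_1\le 1$ and $C\ge\log\abs{\rho}=\log\abs{s_2}$; in particular the borderline $a_1+a_2=1$ (where $s_1=1$) falls into the second branch, exactly as stated.

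I do not anticipate a genuine obstacle: all the probabilistic and analytic work is already carried out in Theorem~\ref{thm:exp_decay_via_char_fct} and Proposition~\ref{prop:small-dev-of-series-char-fct}, and the only thing that needs care is the bookkeeping of the signs of the roots and the precise correspondence between the inequalities cutting out $E_1$ and the positions of $s_1,s_2$ relative to $\pm 1$.
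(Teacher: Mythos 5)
Your proposal is correct and follows essentially the same route as the paper: identify that on $E_1$ the roots satisfy $s_2<-1$ and $\abs{s_2}>s_1>0$, write $c_n$ in the form $d\rho^n+\beta_n r^n$ with $\rho=s_2$, $r=s_1$, and apply Theorem~\ref{thm:exp_decay_via_char_fct}, translating $\abs{r}=s_1\le 1$ into $a_1+a_2\le 1$ via the sign of $f_2(1)$. The only difference is that you spell out the root-location bookkeeping (via $f_2(-1)$ and $f_2(\pm1)$) that the paper leaves implicit.
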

\begin{proof}
   For $(a_1,a_2) \in E_1$, we have that $s_2 < - 1$ and $\abs{s_2} > s_1 > 0$. Hence, we can apply Theorem~\ref{thm:exp_decay_via_char_fct} with $\rho = s_2$ and $r=s_1$. To get the lower bound on $C$, note that $\abs{r} = s_1 \le 1$ amounts to $a_1 + a_2 \leq 1$. 
\end{proof}
\begin{remark}
   One can show by direct computation that the correlation coefficient $\rho_n$ of $X_{n-1}$ and $X_n$, given by 
\[ 
  \rho_n = \E{}{X_{n-1} X_n}/\sqrt{\E{}{X_{n-1}^2} \E{}{X_n^2}}, 
\] 
satisfies $\rho_n = -1 + O(\abs{s_1/s_2}^n)$. Clearly, $p_N \leq \pr{}{X_{n-1} \leq 0, X_n \leq 0}$, and if $Y_1$ is a centered Gaussian random variable, we get in view of a well-known formula for Gaussian random variables (see e.g.\ Exercise 8.5.1 in \cite{grimmett-stirzaker:2001}) that
\[
   \pr{}{X_{n-1} \leq 0, X_n \leq 0} = \frac{1}{2\pi}\left(\frac{\pi}{2} + \arcsin \rho_n \right).
\]
Since $\pi/2 + \arcsin x \sim \sqrt{2(1+x)}$ as $x \downarrow -1$ (by l'H\^{o}pital's rule), it follows that $p_N \precsim \abs{s_1/s_2}^{N/2}$.
\end{remark}
Note that the previous results do not cover the case $a_1 + 1 = a_2$ if $a_2 \in (0,1)$. Let us now turn to this particular case. One verifies that $c_n = (a_1^{n+1} + (-1)^n)/(a_1 + 1)$, i.e.\ $c_n$ osciallates but does not diverge as in Theorem~\ref{thm:exp_decay_via_char_fct}. We show that $p_N$ still decreases at least exponentially in this case.
\begin{prop}\label{prop:NW-boundary}
   Let $a_1 + 1 = a_2$ and set $Z_n = a_2 Z_{n-1} + Y_n$ for $n \geq 1$. Then, for all $x \geq 0$ and $N \geq 1$,
\[
   \pr{}{\sup_{n=1,\dots,N} X_n \leq x} \leq \pr{}{\sup_{n=1,\dots,N} Z_n \leq 2x}.
\]
In particular, if $a_2 \in (0,1)$, $\E{}{(Y_1^-)^\alpha} < \infty$ and $\pr{}{Y_1 \geq 2x(1- a_2)} > 0$, it holds that $p_N(x) \precsim \exp(-\lambda N)$ for some $\lambda = \lambda(x) > 0$.
\end{prop}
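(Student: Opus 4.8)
The plan is to notice that along the line $a_2 = a_1 + 1$ the AR($2$) process collapses, after a one-step linear transformation, to an AR($1$) process, and then to invoke Theorem~\ref{thm:AR_1_nov-kord}.

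First I would record the algebraic identity that drives everything. Starting from $X_n = a_1 X_{n-1} + a_2 X_{n-2} + Y_n$ (valid for $n \ge 1$ with $X_m = 0$ for $m \le 0$) and adding $X_{n-1}$ to both sides, the relation $a_1 + 1 = a_2$ gives
\[
X_n + X_{n-1} = (a_1 + 1) X_{n-1} + a_2 X_{n-2} + Y_n = a_2\bigl(X_{n-1} + X_{n-2}\bigr) + Y_n, \qquad n \ge 1 .
\]
Thus $U_n := X_n + X_{n-1}$ satisfies $U_n = a_2 U_{n-1} + Y_n$ with $U_0 = 0$; since $Z$ obeys the same recursion with the same initial value and the same innovations $(Y_n)$, we get $U_n = Z_n$ for all $n$ on one and the same probability space (equivalently, $c_n + c_{n-1} = a_2^{\,n}$ for all $n \ge 0$). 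Next I would deduce the event inclusion: if $X_m \le x$ for all $m = 1,\dots,N$, then, because $X_0 = 0 \le x$ as well, $Z_n = X_n + X_{n-1} \le 2x$ for every $n = 1,\dots,N$, so that
\[
\set{\sup_{n=1,\dots,N} X_n \le x} \subseteq \set{\sup_{n=1,\dots,N} Z_n \le 2x},
\]
and taking probabilities gives the first inequality for all $x \ge 0$, $N \ge 1$.

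For the exponential bound I would apply Theorem~\ref{thm:AR_1_nov-kord} to $Z$, which is an AR($1$) process with coefficient $\rho = a_2 \in (0,1)$, and with barrier $2x$: the assumption $\E{}{(Y_1^-)^\alpha} < \infty$ implies $\E{}{(Y_1^-)^\delta} < \infty$ for some $\delta \in (0,1)$, and $\pr{}{Y_1 \ge 2x(1-a_2)} > 0$ is exactly the required positivity condition. Hence $\E{}{\exp(\lambda \sigma)} < \infty$ for some $\lambda > 0$, where $\sigma := \inf\set{n \ge 0 : Z_n > 2x}$, and by Markov's inequality $\pr{}{\sup_{n \le N} Z_n \le 2x} = \pr{}{\sigma > N} \le e^{-\lambda N}\,\E{}{\exp(\lambda \sigma)}$. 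Combined with the event inclusion above, this yields $p_N(x) \precsim e^{-\lambda N}$.

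I do not expect any genuine obstacle here: the whole argument hinges on spotting the telescoping identity $X_n + X_{n-1} = Z_n$, after which the AR($1$) theory transfers verbatim. The only points needing minor care are the boundary index $n = 1$ (covered by the convention $X_0 = 0$) and checking that the moment hypothesis on $Y_1^-$ is no weaker than the one required by Theorem~\ref{thm:AR_1_nov-kord}.
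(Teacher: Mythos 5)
Your argument is correct and is essentially identical to the paper's proof: the paper also observes that $X_n + X_{n-1}$ satisfies the AR($1$) recursion $Z_n = a_2 Z_{n-1} + Y_n$, deduces the event inclusion $\{X_n \le x,\ n\le N\} \subseteq \{Z_n \le 2x,\ n \le N\}$, and then invokes Theorem~\ref{thm:AR_1_nov-kord}. Your extra remarks about the initial index (via $X_0 = 0$) and the reduction of the moment hypothesis to some $\delta \in (0,1)$ are just the details the paper leaves implicit.
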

\begin{proof}
   Note that $X_{n+1} + X_n = (a_1 + 1) X_n + a_2 X_{n-1} + Y_{n+1} = a_2 (X_n + X_{n-1}) + Y_{n+1}$. Hence, $(Z_n)_{n \geq 1}$ can be written in the form $Z_n := X_n + X_{n-1}$. In particular, $X_n \leq x$ for $n=1,\dots,N$ implies that $Z_n \leq 2x$ for $n=1,\dots,N$.\\
If $a_2 \in (0,1)$, we deduce from Theorem~\ref{thm:AR_1_nov-kord} that $p_N(x)$ decays exponentially under the conditions stated above.
\end{proof}
In fact, the idea of proof of Proposition~\ref{prop:NW-boundary} can be generalized as follows:
if $X$ is AR($p$), one can try to determine $b_1,b_2 > 0$ such that $(Z_n)_{n \geq 1}$ is AR($p-1$) where $Z_n := b_1 X_n + b_2 X_{n-1}$. Then we always have that $X_n \leq 0$ for $n=1,\dots,N$ implies $Z_n \leq 0$ for $n=1,\dots,N$. We carry this out for $p=2$.
\begin{prop}\label{prop:reduction_to_AR_1}
   Let $a_1^2 + 4 a_2 > 0$. Moreover, assume that either $a_1, a_2 < 0$ or that $a_1 + a_2 < 1$ if $a_2 > 0$. Then $s_2 < 0$, $-a_2/s_2 < 1$ and $Z_n := X_n - s_2 X_{n-1}$ satisfies $Z_n = -a_2/s_2 Z_{n-1} + Y_n$. In particular,
\[
   \pr{}{\sup_{n=1,\dots,N} X_n \leq x} \leq \pr{}{\sup_{n=1,\dots,N} Z_n \leq (1 - s_2) x}, \quad x \geq 0.
\]
\end{prop}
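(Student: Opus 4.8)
The plan is to use Vieta's formulas for the two roots of $f_2(\lambda)=\lambda^2-a_1\lambda-a_2$ (real and distinct, since $a_1^2+4a_2>0$ makes $h=\sqrt{a_1^2+4a_2}>0$), namely $s_1+s_2=a_1$ and $s_1s_2=-a_2$, together with the defining recursion $X_n=a_1X_{n-1}+a_2X_{n-2}+Y_n$.

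First I would verify the algebraic identity. Setting $Z_n:=X_n-s_2X_{n-1}$ and inserting the recursion gives $Z_n=(a_1-s_2)X_{n-1}+a_2X_{n-2}+Y_n$. Since $a_1-s_2=s_1$ and $a_2=-s_1s_2$, this equals $s_1X_{n-1}-s_1s_2X_{n-2}+Y_n=s_1(X_{n-1}-s_2X_{n-2})+Y_n=s_1Z_{n-1}+Y_n$, where I also use $Z_0=X_0-s_2X_{-1}=0$ so that the recursion holds for every $n\ge1$. In both parameter regimes $a_2\neq0$, hence $s_2\neq0$ and $s_1=-a_2/s_2$; thus $Z$ is precisely the AR($1$)-process $Z_n=(-a_2/s_2)Z_{n-1}+Y_n$.

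Next I would pin down the signs of the roots in the two cases. If $a_1,a_2<0$, then $0<a_1^2+4a_2<a_1^2$, so $0<h<-a_1$, whence $s_1=(a_1+h)/2<0$ and therefore $s_2<s_1<0$; in particular $-a_2/s_2=s_1<0<1$. If instead $a_2>0$ and $a_1+a_2<1$, then $h=\sqrt{a_1^2+4a_2}>\abs{a_1}$, so $s_2=(a_1-h)/2<(a_1-\abs{a_1})/2\le0$, while $f_2(1)=1-a_1-a_2>0$; since the upward parabola $f_2$ is negative precisely on $(s_2,s_1)$ and $1>0>s_2$, positivity of $f_2(1)$ forces $s_1<1$, i.e.\ $-a_2/s_2=s_1<1$. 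Either way $s_2<0$ and $-a_2/s_2<1$, as claimed.

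Finally, the stochastic domination is immediate. On $\set{\sup_{n=1,\dots,N}X_n\le x}$ one has $X_{n-1}\le x$ for $n=2,\dots,N$ and $X_0=0\le x$; since $-s_2>0$, it follows that $Z_n=X_n-s_2X_{n-1}\le x+(-s_2)x=(1-s_2)x$ for every $n=1,\dots,N$, so $\set{\sup_{n}X_n\le x}\subseteq\set{\sup_{n}Z_n\le(1-s_2)x}$, and taking probabilities yields the inequality. I expect no serious obstacle here; the only delicate point is the sign analysis of the roots, in particular confirming $s_1<1$ in the region $a_2>0$, which is exactly where the hypothesis $a_1+a_2<1$ enters, through the sign of $f_2(1)$.
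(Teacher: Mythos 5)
Your proof is correct and takes essentially the same route as the paper: both reduce the AR($2$)-process to the AR($1$)-process $Z_n = X_n - s_2 X_{n-1}$ (the paper derives the coefficient via the ansatz $Z_n = b_1 X_n + b_2 X_{n-1}$, you verify it directly from Vieta's formulas $s_1+s_2=a_1$, $s_1 s_2 = -a_2$) and then use the inclusion $\set{X_n \le x,\ n\le N} \subseteq \set{Z_n \le (1-s_2)x,\ n \le N}$. Your check that $-a_2/s_2 = s_1 < 1$ via the sign of $f_2(1) = 1 - a_1 - a_2$ is a slightly slicker shortcut than the paper's case analysis of the inequality $a_1 + 2a_2 < h$, but the argument is the same in substance.
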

\begin{proof}
Let us determine $b_1,b_2>0$ such that $(Z_n)_{n \geq 1}$ defined by $Z_n := b_1 X_n + b_2 X_{n-1}$ is an AR($1$)-process. We have that
\begin{align*}
   Z_n = (b_1 a_1 + b_2) X_{n-1} + b_1 a_2 X_{n-2} + b_1 Y_n = \frac{b_1 a_1 + b_2}{b_1} b_1 X_{n-1} + \frac{b_1 a_2}{b_2} b_2 X_{n-2} + b_1 Y_n.
\end{align*}
Hence, if $(b_1 a_1 + b_2)/b_1 = b_1 a_2 / b_2$, it follows that
\[
   Z_n = \frac{b_1 a_2}{b_2} Z_{n-1} + b_1 Y_n = \frac{a_2}{\lambda} Z_{n-1} + b_1 Y_n,
\]
where $\lambda := b_2/b_1 > 0$ satisfies $a_1 + \lambda = a_2/\lambda$, i.e.\ $\lambda^2 + a_1 \lambda - a_2 = 0$. The solutions to this equation are $-s_1$ and $-s_2$. Since $a_1^2 + 4a_2 > 0$, we have that $s_2 < s_1$. Hence, we can find $\lambda > 0$ such that $Z$ defines an AR($1$)-process if and if only $s_2<0$, and $\lambda = -s_2$ in that case. Now $s_2 < 0$ amounts to $a_1 \leq 0$ or $a_1,a_2 >0$ since $h > 0$. \\
It follows that 
\[
   \bigcap_{n=1}^N \set{X_n \leq x} \subseteq \bigcap_{n=1}^N \set{Z_n \leq (b_1 + b_2) x} = \bigcap_{n=1}^N \set{Z_n \leq b_1(1 - s_2) x}, \quad x \geq 0.
\]
Finally, $a_2 /\lambda < 1$ if and only if $a_1 + 2a_2 < h$. If $a_1,a_2 >0$, this amounts to $a_1 + a_2 < 1$. \\
In the remaining cases, we necessarily have that $a_1 \leq 0$. If also $a_1 + 2a_2 \leq 0$ (in particular, if $a_1,a_2 \leq 0$), the inequality is obviously satisfied. Finally, if $a_1 + 2a_2 > 0$, $a_1 + 2a_2 < h$ is equivalent to $a_1^2 +4a_1 a_2 + 4 a_2^2 < a_1^2 + 4 a_2$, i.e.\ $a_1 + a_2 < 1$ since $a_2 >0$. The assertion of the proposition follows if we set $b_1 = 1$ and $b_2 = -s_2$.
\end{proof}
The preceding proposition allows us to find exponential upper bounds for the survival probability $p_N$ for a wide class of distributions. Specifically, we obtain exponential upper bounds for certain parameters $a_1$ and $a_2$ and distributions that do not fulfill the requirements of Theorem~\ref{thm:exp_decay_via_char_fct}. Let us record this result as a corollary:
\begin{cor}
   Let $a_1,a_2$ be such that $a_2 > 0$ and $a_1 + a_2 < 1$. Assume that $\E{}{(Y_1^-)^\alpha} < \infty$ for some $\alpha > 0$. Let $x \geq 0$ such that $\pr{}{Y_1 > x (1-s_2)(1-a_2/s_2)} > 0$. Then $p_N(x) \precsim \exp(-\lambda N)$ for some $\lambda = \lambda(x) > 0$.
\end{cor}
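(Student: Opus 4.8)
The plan is to combine Proposition~\ref{prop:reduction_to_AR_1} with Theorem~\ref{thm:AR_1_nov-kord} (the result of \cite{novikov-kordzakhia:2008}). First I would note that $a_2 > 0$ forces $a_1^2 + 4a_2 \geq 4a_2 > 0$, so the hypotheses of Proposition~\ref{prop:reduction_to_AR_1} hold (the relevant case being $a_2 > 0$ and $a_1 + a_2 < 1$). Hence $s_2 < 0$, the number $\rho := -a_2/s_2$ lies in $(0,1)$, and $Z_n := X_n - s_2 X_{n-1}$ is an AR($1$)-process $Z_n = \rho Z_{n-1} + Y_n$ with the same innovations, satisfying
\[
   p_N(x) \leq \pr{}{\sup_{n=1,\dots,N} Z_n \leq (1-s_2)x}, \qquad x \geq 0.
\]
Since $1 - s_2 > 0$, the right-hand side is the survival probability of an AR($1$)-process below the barrier $\tilde{x} := (1-s_2)x \geq 0$.

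Next I would check that $Z$ meets the hypotheses of Theorem~\ref{thm:AR_1_nov-kord} with barrier $\tilde{x}$. The negative-moment assumption transfers for free: from $\E{}{(Y_1^-)^\alpha} < \infty$ one obtains $\E{}{(Y_1^-)^\delta} < \infty$ with $\delta := \min\set{\alpha,1/2} \in (0,1)$, since $(Y_1^-)^\delta \leq 1 + (Y_1^-)^\alpha$. For the barrier/tail condition one has to keep track of constants: Theorem~\ref{thm:AR_1_nov-kord} requires $\pr{}{Y_1 > \tilde{x}(1-\rho)} > 0$, and here $\tilde{x}(1-\rho) = x(1-s_2)(1 + a_2/s_2)$. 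As $a_2/s_2 \leq 0$, we have $1 + a_2/s_2 \leq 1 - a_2/s_2$, so $\tilde{x}(1-\rho) \leq x(1-s_2)(1-a_2/s_2)$; the assumption $\pr{}{Y_1 > x(1-s_2)(1-a_2/s_2)} > 0$ therefore implies $\pr{}{Y_1 > \tilde{x}(1-\rho)} > 0$.

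With both hypotheses verified, Theorem~\ref{thm:AR_1_nov-kord} gives $\E{}{e^{\alpha_0\tau}} < \infty$ for some $\alpha_0 > 0$, where $\tau := \inf\set{n \geq 0 : Z_n > \tilde{x}}$. Markov's inequality then yields $\pr{}{\sup_{n=1,\dots,N} Z_n \leq \tilde{x}} = \pr{}{\tau > N} \leq e^{-\alpha_0 N}\,\E{}{e^{\alpha_0\tau}}$, and combining with the displayed bound above gives $p_N(x) \precsim e^{-\lambda N}$ with $\lambda := \alpha_0$. I do not expect a genuine obstacle---the statement is essentially a corollary of the two quoted results, and the only point needing care is the bookkeeping of the barrier constant through the AR($1$) reduction, carried out above.
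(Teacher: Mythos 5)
Your proposal is correct and follows essentially the same route as the paper: reduce to the AR($1$)-process $Z_n = -\,(a_2/s_2) Z_{n-1} + Y_n$ via Proposition~\ref{prop:reduction_to_AR_1} and then invoke Theorem~\ref{thm:AR_1_nov-kord}. Your extra bookkeeping (lowering $\alpha$ to some $\delta\in(0,1)$ and checking that the stated barrier condition implies the one with factor $1-\rho=1+a_2/s_2$) is exactly the verification the paper leaves implicit.
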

\begin{proof}
   Set $\rho := -a_2/s_2$ and let $(Z_n)_{n \geq 1}$ satisfy $Z_n = \rho Z_{n-1} + Y_n$. By Proposition~\ref{prop:reduction_to_AR_1}, we have that $\rho \in (0,1)$ and that $p_N(x) \leq \pr{}{\sup_{n=1,\dots,N} Z_n \leq x(1-s_2)}$. The claim now follows from Theorem~\ref{thm:AR_1_nov-kord}.
\end{proof}

Let us finally turn to the region $a_1 > 0$ and $a_1^2+4a_2 < 0$ ($E_3$ in Figure~\ref{fig:2}) so that the sequence $c_n$ involves expressions with sine and cosine, cf.\  \eqref{eq:sol_diff_eq_sin_cos}. 
\begin{prop}\label{prop:c_k_with_sin_cos}
   Let $(a_1,a_2) \in E_3$. Assume that $\pr{}{Y_1 > 0} > 0$. Then there exists $\lambda > 0$ such that $p_N \precsim \exp(- \lambda N)$ as $N \to \infty$.
\end{prop}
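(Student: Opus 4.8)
The plan is to combine a short deterministic "rotation $+$ positive drift" argument with a renewal (block) estimate. Since $(a_1,a_2)\in E_3$ we have $a_2<0$ and, by \eqref{eq:sol_diff_eq_sin_cos}, characteristic roots $s_{1,2}=re^{\pm i\varphi}$ with $r=\abs{a_2}^{1/2}>0$ and $\varphi\in(0,\pi/2)$. I would work in companion form: with $W_n:=(X_n,X_{n-1})^\top$ and $A:=\bigl(\begin{smallmatrix}a_1&a_2\\1&0\end{smallmatrix}\bigr)$ one has $W_n=AW_{n-1}+Y_ne_1$, and $A=r\,\Sigma R_\varphi\Sigma^{-1}$ where $R_\varphi$ is the rotation by $\varphi$ and $\Sigma$ a fixed invertible real matrix. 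I also record the elementary fact $E_3\subseteq\set{a_1+a_2<1}$ (otherwise $1-a_1\le a_2<-a_1^2/4$, forcing $(a_1-2)^2<0$); equivalently $\det(I-A)=1-a_1-a_2>0$. The goal of the renewal argument is a lower bound $\pr{}{\exists\, j\le m:\ X_{n+j}>0\mid\mathcal F_n}\ge\epsilon>0$, valid on $\set{X_1\le0,\dots,X_n\le0}$ for fixed $m,\epsilon$ depending only on $(a_1,a_2)$ and the law of $Y_1$.

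The heart of the matter is the following deterministic claim. Fix $\eta''>\eta>0$ with $\eta''/\eta$ a small constant (to be specified) and $p_*:=\pr{}{Y_1\in[\eta,\eta'']}>0$; such a pair exists because $\pr{}{Y_1>0}>0$ (cover $(0,\infty)$ by countably many intervals of fixed ratio). Put $x^*:=\eta/(1-a_1-a_2)>0$, $m:=\lceil\pi/\varphi\rceil+2$, $C_m:=\sum_{k=0}^{m-1}\abs{c_k}<\infty$, and shrink $\eta''/\eta$ so that $(\eta''-\eta)C_m<x^*/2$. \emph{Claim:} if $u,v\le0$ and $y_1,\dots,y_m\in[\eta,\eta'']$, then the AR($2$) recursion started from pair $(u,v)$ and driven by $y_1,\dots,y_m$ has $\max_{1\le j\le m}X_j>0$. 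Indeed, $W^*:=(x^*,x^*)^\top$ is the fixed point of $w\mapsto Aw+\eta e_1$, so $W_j=W^*+A^j(W_0-W^*)+E_j$ with $E_j=\sum_{i=1}^jA^{j-i}(y_i-\eta)e_1$; since $(A^ke_1)_1=c_k$ we get $\abs{(E_j)_1}\le(\eta''-\eta)C_m<x^*/2$ for $j\le m$. Moreover $W_0-W^*\neq0$ (its first coordinate is $<-x^*<0$), and $A^j=r^j\Sigma R_{j\varphi}\Sigma^{-1}$ gives $(A^j(W_0-W^*))_1=r^j\kappa\cos(j\varphi+\gamma)$ for some $\kappa>0$, $\gamma\in\mathbb{R}$; as $\varphi<\pi/2$ and $(m-1)\varphi>\pi$, the progression $j\varphi+\gamma$, $j=1,\dots,m$, spans an arc of length $>\pi$ in steps $<\pi/2$, hence $\cos(j^*\varphi+\gamma)\ge0$ for some $j^*\le m$. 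For that $j^*$, $X_{j^*}=(W_{j^*})_1\ge x^*+0-(\eta''-\eta)C_m>x^*/2>0$.

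Granting the claim, the conditional escape bound follows: on $\set{X_1\le0,\dots,X_n\le0}$ we have $X_{n-1},X_n\le0$, so conditioning on $\set{Y_{n+1},\dots,Y_{n+m}\in[\eta,\eta'']}$ forces $X_{n+j}>0$ for some $j\le m$, whence $\pr{}{\exists\,j\le m:\ X_{n+j}>0\mid\mathcal F_n}\ge p_*^m=:\epsilon$, depending on $\mathcal F_n$ only through $(X_{n-1},X_n)$. Then, conditioning on $\mathcal F_n$ and using the Markov property of $(W_n)$, $\pr{}{X_1\le0,\dots,X_{n+m}\le0}\le(1-\epsilon)\,\pr{}{X_1\le0,\dots,X_n\le0}$ for $n\ge1$, and $\pr{}{X_1\le0,\dots,X_m\le0}\le1-\epsilon$ (same argument with $X_0=X_{-1}=0$). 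Iterating over $n=0,m,2m,\dots$ gives $p_{km}\le(1-\epsilon)^k$, and since $p_N$ is nonincreasing, $p_N\le(1-\epsilon)^{\lfloor N/m\rfloor}\precsim e^{-\lambda N}$ with $\lambda=-m^{-1}\log(1-\epsilon)>0$.

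The step I expect to need the most care is the deterministic claim, and within it the quantitative inequality $X_{j^*}>x^*/2$: one must bound the perturbation $E_j$ uniformly over the window of innovations — which is precisely why $\eta''/\eta$ has to be pushed close to $1$, and why it is essential that only the finitely many coefficients $c_0,\dots,c_{m-1}$ enter (so $C_m<\infty$ irrespective of whether $\abs{a_2}$ is $<1$, $=1$ or $>1$) — while at the same time guaranteeing that the unperturbed trajectory $W^*+A^j(W_0-W^*)$ has first coordinate $\ge x^*$ at some $j$ in a window of fixed length. That last point is exactly where the hypotheses enter: $\varphi\in(0,\pi/2)$ (i.e. $a_1>0$ together with $a_1^2+4a_2<0$) makes the state rotate through a half-turn in a bounded number of steps, and $a_1+a_2<1$ makes $x^*>0$. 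Everything else — the conditioning identities and the iteration — is routine.
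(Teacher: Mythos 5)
Your argument is correct, but it runs along a different track than the paper's. You work in companion form, use the affine fixed point $W^*=(x^*,x^*)^\top$ of $w\mapsto Aw+\eta e_1$ (which requires first checking $a_1+a_2<1$ on $E_3$, as you do), and force the innovations into a multiplicatively thin interval $[\eta,\eta'']$ so that the perturbation $E_j$ cannot destroy the escape produced by the rotation of $A^j(W_0-W^*)$ within a window of length $m=\lceil\pi/\varphi\rceil+2$; a block/independence iteration then gives $p_N\le(1-p_*^m)^{\lfloor N/m\rfloor}$. The paper instead stays at the level of the coefficients $c_k$: it sets $q:=\inf\set{k\ge1:c_k\le0}$, shows $q\le\lceil\pi/\varphi\rceil<\infty$ from \eqref{eq:sol_diff_eq_sin_cos}, and uses the identity $X_n=c_qX_{n-q}+a_2c_{q-1}X_{n-q-1}+\sum_{j}c_{q-j}Y_{n-q+j}$ together with $c_q\le 0$, $a_2c_{q-1}<0$ to conclude that survival up to $N$ forces the i.i.d. block variables $\sum_j c_{q-j}Y_{\cdot}$ to be nonpositive; since $c_0,\dots,c_{q-1}>0$, each block is positive with probability at least $\pr{}{Y_1>0}^q$, and the same exponential bound follows. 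The paper's route is shorter and needs no quantitative control of the innovations (no interval-ratio trick, no fixed point, no use of $a_1+a_2<1$), because the sign structure of $(c_k)$ does all the work; your route is more robust in spirit — it is a genuine "drift plus rotation" argument on the state space that would survive perturbations of the model where the exact sign pattern of the coefficients is less transparent — at the cost of several auxiliary constants ($\eta,\eta'',x^*,C_m$) whose interdependence you correctly untangle (the ratio constraint on $\eta''/\eta$ depends only on $(a_1,a_2)$, so the covering argument for choosing the interval is sound). Both proofs ultimately rest on the same phenomenon: the complex roots make the process rotate out of the negative half-line within a bounded number of steps, and independence over disjoint blocks converts this into exponential decay.
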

\begin{proof}
   The recursion $X_n = a_1 X_{n-1} + a_2 X_{n-2} +Y_n$ allows us to express $X_n$ as follows ($n \geq k+2$):
\[
   X_n = \alpha_k X_{n-k} + \beta_k X_{n-k-1} + L_k(Y_{n-k+1},\dots,Y_n)
\]
where $L_k(x_1,\dots,x_k)$ is some linear combination of $x_1,\dots,x_k$. Clearly, $\alpha_1 = a_1$, $\beta_1 = a_2$ and $L_1(x_1) = x_1$ and iteratively, we get that $\alpha_{k+1} = a_1 \alpha_k + \beta_k$, $\beta_{k+1} = a_2 \alpha_k$ and $L_{k+1}(x_1,\dots,x_{k+1}) = \alpha_k x_1 + L_k(x_2,\dots,x_{k+1})$ for $k \geq 1.$ In particular, $\alpha_k = a_1 \alpha_{k-1} + a_2 \alpha_{k-2}$ for $k \geq 2$ with $\alpha_0 = 1$ and $\alpha_1 = a_1$, hence,
\[
 \alpha_k = c_k, \quad \beta_k = a_2 c_{k-1}, \quad L_k(x_1,\dots,x_k) = \sum_{j=1}^k c_{k-j} x_j.
\]
Let $q := \inf\set{ k \geq 1 : c_k \leq 0 }$. Assume that $q < \infty$ by \eqref{eq:sol_diff_eq_sin_cos}. Then, if $X_n \leq 0$ for all $n \leq N$, it follows that
\begin{align*}
   0 &\geq X_n = c_q X_{n-q} + a_2 c_{q-1} X_{n-q-1} + L_q(Y_{n-q+1},\dots,Y_n) \\
 &\geq 0 + 0 + L_q(Y_{n-q+1},\dots,Y_n), \quad n = q+2,\dots,N,
\end{align*}
where we have used the fact that $a_2 c_{q-1} < 0$ by the definition of $q$.\\
In particular, we have that
\begin{align*}
   \pr{}{\sup_{n=1,\dots,N} X_n \leq 0} &\leq \pr{}{\sup_{n=q+2,\dots,N} L_q(Y_{n-q+1},\dots,Y_n) \leq 0} \\
					&\leq \pr{}{\sup_{k=1,\dots,\lfloor N/(q+1) \rfloor} L_q(Y_{k(q+1)-q+1},\dots,Y_{k(q+1)}) \leq 0}\\
					&\leq \pr{}{L_q(Y_{2},\dots,Y_{q+1}) \leq 0}^{\lfloor N/(q+1) \rfloor},
\end{align*}
since $(L_q(Y_{kq+1},\dots,Y_{(k+1)q})_{k=0,1,\dots}$ are i.i.d. Next, note that $X_q$ and $L_q(Y_2,\dots,Y_{q+1})$ have the same law. Hence, using that $c_0,\dots,c_{q-1} > 0$ and $\pr{}{Y_1 > 0} > 0$, we have that
\begin{align*}
   \pr{}{X_q > 0} = \pr{}{\sum_{k=1}^q c_{q-k} Y_k > 0} \geq \pr{}{Y_1 > 0}^q > 0.
\end{align*}
It remains to show that $q < \infty$. Let $\varphi \in (0,\pi/2)$ be the angle associated with $(a_1,a_2)$ in \eqref{eq:sol_diff_eq_sin_cos}. Since $a_1 > 0$, it follows from \eqref{eq:sol_diff_eq_sin_cos} that $c_n \leq 0$ for some $n$ if $\sin(n\varphi) \leq 0$ and $\cos(n \varphi) \leq 0$ for some $n$. Take $n = \lceil \pi / \varphi \rceil$. Clearly, $ \pi \leq n \varphi \leq (\pi/\varphi + 1)\varphi \leq 3\pi/2$ since $\varphi \leq \pi/2$. Since $\sin x \leq 0$ and $\cos x\leq 0$ for all $x \in [\pi,3\pi/2]$, we have shown that $q \leq \lceil \pi / \varphi \rceil$.
\end{proof}

We are now ready to give a proof of Theorem~\ref{thm:exp_upper_bound_summary} which is a corollary of the previous results. A look at Figure~\ref{fig:2} will be helpful to distinguish the different cases. 
\begin{proof}(of Theorem~\ref{thm:exp_upper_bound_summary})
On $E_1$, the assertion follows from Corollary~\ref{cor:north-west-region}. On $E_2 = (-\infty,0]^2$, the assertion is trivial. If $(a_1,a_2) \in E_3$, we can apply Proposition~\ref{prop:c_k_with_sin_cos}. The remaining cases covered by Theorem~\ref{thm:summable_seq_1} and Proposition \ref{prop:NW-boundary} (the latter is needed for the strip  $a_2 = 1 + a_1$ with $a_1 \in (-1,0)$ only).
\end{proof}

Note that we have established exponential upper bounds on $p_N$ under various conditions on the distribution of $Y_1$ in the region where $c_n$ goes to $0$ for AR($2$)-processes (cf.\ Remark \ref{rem:c_n_to_zero}) except for the the curve $a_1^2 + 4a_2 = 0$ where $a_1 \in (-2,2)$ and $c_n = (a_1/2)^n (n+1)$. By Theorem~\ref{thm:summable_seq_1}, we know that $p_N \precsim \exp(-\lambda N/\log N)$ in that case if $\E{}{\exp(\abs{Y_1}^\alpha)}$ is finite. If $Y_1$ has a Gaussian law with zero mean, the next proposition establishes an exponential upper bound on $p_N$ in that case. In particular, in combination with the Theorems~\ref{thm:poly_decay_summary}, \ref{thm:exp_upper_bound_summary} and \ref{thm:pos_lim_summary}, we directly obtain Theorem~\ref{thm:gaussian-AR_2-summary}.
\begin{prop}\label{prop:Gaussian_double_root}
   Let $Y_1$ have a Gaussian law. Let $\rho \in (0,1)$ and $(\alpha_n)_{n \geq 0}$ denote a sequence of positive numbers with the following properties
\[
   \alpha_{n+m} \leq C \alpha_n \alpha_m \quad (n,m \geq 0), \qquad \lim_{n \to \infty} e^{-\lambda n} \, \alpha_n = 0 \quad \forall \lambda >0.
\]
Set $X_n := \sum_{k=1}^n \alpha_{n-k} \rho^{n-k} Y_k$. It holds that
\[
   \liminf_{N \to \infty} - N^{-1} \, \log \pr{}{\sup_{n=1,\dots,N} X_n \leq x} > 0, \quad x \in \mathbb{R}.
\]
\end{prop}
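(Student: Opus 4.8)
The plan is to avoid the union bound over $n\le N$ entirely by turning the relevant quantity into a supermartingale. Write $X_n=\sum_{i\ge 0}c_i Y_{n-i}$ with $c_i:=\alpha_i\rho^i$ and the convention $Y_k=0$ for $k\le 0$; since $(\alpha_n)$ grows subexponentially, $c_i$ decays geometrically, so $\sum_i c_i<\infty$. I would introduce a Lyapunov weight on the past innovations: fix $\delta>0$, set $w_i:=\delta\sum_{j>i}c_j$ (so $w_i>0$, $w_i\downarrow 0$, and $w_i-w_{i+1}=\delta c_{i+1}$), and for a small parameter $\eta>0$ put
\[
V_n:=\exp\Bigl(\eta\sum_{i\ge 0}w_i\,\abs{Y_{n-i}}\Bigr),\qquad V_0=1 .
\]
The goal is the one-step drift inequality $\E{}{V_{n+1}\indic{X_{n+1}\le x}\mid\mathcal F_n}\le\theta\,V_n$ for all $n$, where $\mathcal F_n=\sigma(Y_1,\dots,Y_n)$ and $\theta<1$ does not depend on $n$.

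To establish it, write $X_{n+1}=Y_{n+1}+s_n$ with $s_n:=\sum_{j\ge 0}c_{j+1}Y_{n-j}\in\mathcal F_n$, split off the factor $e^{\eta w_0\abs{Y_{n+1}}}$ from $V_{n+1}$, and use independence of $Y_{n+1}$ from $\mathcal F_n$ together with $w_i-w_{i+1}=\delta c_{i+1}$ and $\sum_j c_{j+1}\abs{Y_{n-j}}\ge\abs{s_n}$; this gives
\[
\frac{\E{}{V_{n+1}\indic{X_{n+1}\le x}\mid\mathcal F_n}}{V_n}\ \le\ e^{-\eta\delta\abs{s_n}}\,g(s_n),\qquad g(s):=\E{}{e^{\eta w_0\abs{Y_1}}\indic{Y_1\le x-s}} .
\]
Because $Y_1$ is Gaussian, $m:=\E{}{e^{\eta w_0\abs{Y_1}}}<\infty$ and $\delta_0:=\pr{}{Y_1>x+T_0}>0$ for every $T_0$. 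Bounding $g(s)\le m$ always, and $g(s)\le m-\delta_0$ when $\abs{s}\le T_0$, yields the bound $\theta:=\max\!\bigl(e^{-\eta\delta T_0}m,\ m-\delta_0\bigr)$. Choosing first $\delta$, then $T_0$ large (depending only on $\delta$, $w_0$, $\E{}{\abs{Y_1}}$ so that $e^{-\eta\delta T_0}m<1$ for small $\eta$, using $\log m=O(\eta)$), and finally $\eta$ small enough that $m<1+\delta_0$, one gets $\theta<1$. Iterating the drift inequality via the tower property gives $\E{}{V_N\prod_{n=1}^N\indic{X_n\le x}}\le\theta^N\E{}{V_0}=\theta^N$, and since $V_N\ge 1$ this forces $p_N(x)\le\theta^N$, hence $\liminf_N-N^{-1}\log p_N(x)\ge-\log\theta>0$.

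The step I expect to be the crux is verifying the drift for states in which $s_n$ is very negative: these are exactly the past configurations from which $X_{n+1}$ is unlikely to exceed $x$ in one step, so any naive "uniform escape probability" argument fails there and the state space is unbounded. The role of the weight $V_n$, with the precise choice $w_i=\delta\sum_{j>i}c_j$, is to make such configurations costly, so that the gain $e^{-\eta\delta\abs{s_n}}$ from the change in $V$ dominates the fact that $g(s_n)$ is then near its maximum $m$; simultaneously forcing both competing bounds on $g$ below $1$ is what dictates the order of the choices. I would also remark that the argument uses only $\sum_i c_i<\infty$, a finite exponential moment $\E{}{e^{\eta\abs{Y_1}}}<\infty$ for some $\eta>0$, and $\pr{}{Y_1>t}>0$ for all $t$ — all guaranteed by the hypotheses — so the same scheme applies verbatim to AR($p$) and MA($\infty$) processes of this form for $p\ge 3$.
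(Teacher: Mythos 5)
Your argument is correct, and it is genuinely different from the paper's. You prove the drift inequality $\E{}{V_{n+1}\indic{X_{n+1}\le x}\mid\mathcal F_n}\le\theta V_n$ exactly as claimed: the identity $V_{n+1}=V_n\,e^{\eta w_0\abs{Y_{n+1}}}\,e^{-\eta\delta\sum_j c_{j+1}\abs{Y_{n-j}}}$ holds with your choice $w_i=\delta\sum_{j>i}c_j$, the freezing of $s_n\in\mathcal F_n$ is legitimate by independence, and the two regimes $\abs{s_n}>T_0$ (gain from the Lyapunov decrease) and $\abs{s_n}\le T_0$ (uniform escape probability $\delta_0=\pr{}{Y_1>x+T_0}$, available since the Gaussian has unbounded upper support) together give $\theta=\max\bigl(e^{-\eta\delta T_0}m,\ m-\delta_0\bigr)<1$ with your order of choices ($\delta$, then $T_0\gtrsim \E{}{\abs{Y_1}}\sum_j c_j+1/\delta$, then $\eta$ small, using $\log m(\eta)\sim\eta w_0\E{}{\abs{Y_1}}$); iterating and using $V_N\ge1$ yields $p_N(x)\le\theta^N$. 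Only a cosmetic point: $c_0=\alpha_0$ need not equal $1$, so the one-step indicator is $\set{Y_{n+1}\le(x-s_n)/\alpha_0}$, which changes nothing. The paper instead exploits Gaussianity head-on: after reducing to mean zero, it matches variances with an auxiliary process $Z_n=\gamma_n\sum_{k}\delta^{n-k}Y_k$ ($\rho<\delta<1$), uses the submultiplicativity $\alpha_{n+m}\le C\alpha_n\alpha_m$ to show $\E{}{X_nX_m}\le\E{}{Z_nZ_m}$ for $\abs{n-m}\ge q$, applies Slepian's inequality along the subsequence $nq$, and reduces to the persistence of Brownian motion sampled at geometrically growing times, where exponential decay follows from Slepian plus subadditivity. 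Your route buys more: it needs neither Gaussian comparison nor the submultiplicativity of $(\alpha_n)$, only $\sum_i\alpha_i\rho^i<\infty$, a finite exponential moment, and $\pr{}{Y_1>t}>0$ for all $t$, so it extends to non-Gaussian innovations and gives an explicit rate $-\log\theta$; the paper's approach, on the other hand, stays within the Gaussian comparison toolkit used elsewhere in the article and connects the statement to known Brownian persistence results.
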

\begin{proof}
Clearly, we may suppose that $\E{}{(Y_1 - \E{}{Y_1})^2} = 1$. Moreover, it suffices to consider the case $\E{}{Y_1} = 0$. To see this, set $\sum_{k=1}^n \alpha_{n-k} \rho^{n-k} (Y_k - \mu)$. If $\mu := \E{}{Y_1} < 0$, we have that 
\[
  X_n = \sum_{k=1}^n \alpha_{n-k} \rho^{n-k} (Y_k - \mu) + \mu \sum_{k=0}^{n-1} \alpha_{k} \rho^k \geq \tilde{X}_n + \mu \sum_{k=0}^\infty \alpha_k \rho^k,
\]
where $A := \sum_{k=0}^\infty \alpha_k \rho^k < \infty$ since $\rho < 1$ and $\alpha_n = e^{o(n)}$. Hence,
\[
   \pr{}{\sup_{n=1,\dots,N} X_n \leq x} \leq \pr{}{ \sup_{n=1,\dots,N} \tilde{X}_n \leq x - \mu A}.
\]
Similarly, if $\mu > 0$, $X_n \geq \tilde{X}_n$ for all $n$, and therefore
\[
   \pr{}{\sup_{n=1,\dots,N} X_n \leq x} \leq \pr{}{ \sup_{n=1,\dots,N} \tilde{X}_n \leq x}.
\]
Hence, we can assume from now on that $\E{}{Y_1} = 0$ and $\E{}{Y_1^2} = 1$. Let $\rho < \delta < 1$ and set
\[
   \gamma_n := \sqrt{\frac{\sum_{k=0}^{n-1} \rho^{2k} \alpha_k^2}{\sum_{k=0}^{n-1} \delta^{2k}}}, \qquad Z_n := \gamma_n \sum_{k=1}^n \delta^{n-k} Y_k.
\]
We would like to apply Slepian's inequality (Corollary 3.12 in \cite{ledoux-talagrand:1991}) to compare the probabilities that $X$ and $Z$ stay below $0$ until time $N$. By construction, we have that $\E{}{X_n^2} = \E{}{Z_n^2}$ for all $n \geq 1$. Next, note that $\gamma_n \geq \alpha_0 \sqrt{1-\delta^2}$ for all $n \geq 1$. Hence, if $n>m \geq 1$, we have that
\begin{align*}
  \E{}{Z_n Z_m} = \gamma_n \gamma_m \sum_{k=1}^m \delta^{n-k} \delta^{m-k} \geq \alpha_0^2 (1-\delta^2) \delta^{n-m} \sum_{k=1}^m \delta^{2(m-k)} \geq  C_1 \delta^{n-m},
\end{align*}
where $C_1 := \alpha_0^2 (1-\delta^2)$. Moreover, 
\begin{align*}
   \E{}{X_n X_m} &= \sum_{k=1}^m \alpha_{n-k} \alpha_{m-k} \rho^{m-k} \rho^{n-k} = \rho^{n-m} \sum_{k=1}^m \alpha_{(n-m) + m-k} \alpha_{m-k} \rho^{2(m-k)} \\
&\leq C \rho^{n-m} \alpha_{n-m} \sum_{k=1}^m \alpha_{m-k}^2 \rho^{2(m-k)} \leq C \rho^{n-m} \alpha_{n-m} \sum_{k=0}^\infty \alpha_{k}^2 \rho^{2k} =: C_2 \rho^{n-m} \alpha_{n-m}. 
\end{align*}
In the last equality, we have used that $\sum_{k=0}^\infty \alpha_{k}^2 \rho^{2k}$ converges since $\alpha_n = e^{o(n)}$. Now $C_1 \delta^{n-m} \geq C_2 \alpha_{n-m} \rho^{n-m}$ holds whenever $n-m \geq q$ for some $q \geq 1$ since $\delta > \rho$ and $a_n$ grows slower than any exponential. In particular, $\E{}{X_n X_m} \leq \E{}{Z_n Z_m}$ whenever $\abs{n-m} \geq q$. \\
Hence, using Slepian's inequality, we obtain that
\[
 \pr{}{\sup_{n=1,\dots,N} X_n \leq x} \leq \pr{}{\sup_{n=1,\dots,\lfloor N/q \rfloor} X_{n q} \leq x} \leq \pr{}{\sup_{n=1,\dots,\lfloor N/q \rfloor} Z_{n q} \leq x}.
\]
Let $\tilde{Z}_n := \delta^{-nq} Z_{nq} / \gamma_{nq} = \sum_{k=1}^{nq} \delta^{-k} Y_k$.  One verifies easily that $(\tilde{Z}_n)_{n \geq 1}$ is equal in distribution to $(B(t_n))_{n \geq 1}$ where $(B_t)_{t \geq 0}$ is a one-dimensional Brownian motion and $t_n := \sum_{k=1}^{nq} \delta^{-2k} = C_\delta (\delta^{-2nq} - 1)$, so
\begin{align*}
    p_N &\leq \pr{}{\sup_{n=1,\dots,N} Z_{n q} \leq x} = \pr{}{\bigcap_{n=1}^N \set{ \tilde{Z}_n \leq x \delta^{-nq}/\gamma_{nq} } } \\
&= \pr{}{ \bigcap_{n=1}^N \set{ B( C_\delta(\delta^{-2qn} - 1) ) \leq x \delta^{-nq}/\gamma_{nq} } } \leq \pr{}{\sup_{n=1,\dots,N} B( \delta^{-2qn} - 1 ) \leq \tilde{x} },
\end{align*}
where we have used the scaling property of Brownian motion and the fact that $\gamma_n \geq C_1 \alpha_0/\delta^n$ for all $n$ (i.e.\ $\tilde{x} := x / (C_1 \alpha_0 C_\delta^{1/2})$). Next, note that 
\begin{align*}
   \pr{}{\sup_{n=1,\dots,N} B(\delta^{-2qn}) \leq 0} &\geq \pr{}{B_1 \leq -\tilde{x}, \sup_{n=1,\dots,N} B(\delta^{-2qn}) - B_1 \leq \tilde{x}} \\
&= \pr{}{B_1 \leq - \tilde{x}}  \, \pr{}{\sup_{n=1,\dots,N} B(\delta^{-2qn} - 1) \leq \tilde{x}}.
\end{align*}
An application of Slepian's inequality together with a subadditivity argument (see e.g.\ Eq.\ 2.6 of \cite{aurzada-baumgarten:2011}) yields that
\[
   \liminf_{N \to \infty} N^{-1} \log \pr{}{\sup_{n=1,\dots,N} B(a^n) \leq 0} > 0, \quad a > 1.
\]
\end{proof}
\subsection{Exponential lower bounds}
Let us now comment on exponential lower bounds for AR-processes. In general, we cannot expect to find exponential lower bounds in the whole region where we have established exponential upper bounds. The following example illustrates this point for AR($2$)-processes.
\begin{example}
   If $X$ is AR($p$) and the innovation $Y_1$ takes only the values $\pm y$ for some $y>0$ and $a_1 < - 1$, then $p_2 = \pr{}{X_1 \leq 0,X_2 \leq 0} = 0$. Indeed, on $\set{X_1 \leq 0} = \set{Y_1 = -y}$, we have that $X_2 = a_1 Y_1 + Y_2 \geq -ya_1 - y = -y(a_1 + 1) > 0$. \\
Similarly, if $a_1 \in [-1,0]$ and $a_1(a_1 + 1) + a_2 < -1$, one has that $p_3 = 0$.
\end{example}
Let us also remark that if $X$ is AR($p$) with $a_1 \geq 0, \dots, a_p \geq 0$, it is trivial to obtain the exponential lower bound $p_N(x) \ge p_N \ge \pr{}{Y_1 \leq 0}^N$.\\
The following theorem states a simple condition on the coefficients $a_1,\dots,a_p$ such that the survival probability cannot decay faster than exponentially.
\begin{thm}\label{thm:AR_p_exp_lower_bound}
   If $X$ is AR($p$) with $\sum_{k=1}^p \abs{a_k} < 1$, it holds that $p_N \succsim c^N$ for all $N$ where $c \in (0,1)$. Moreover, if $a_k > 0$ for some $k \in \set{1,\dots,p}$, one may take 
\[
   c := \sup \set{ \pr{}{Y_1 \in [\alpha(1 - a_+),\alpha \abs{a_-}]} : \alpha < 0}
\]
where (with the convention that $\sum_{\emptyset} = 0$)
\[
   a_+ := \sum_{k \in I_+} a_k, \quad a_- :=  \sum_{k \in I_-} a_k, \quad I_+ = \set{k : a_k > 0}, \quad I_- = \set{k : a_k < 0}. 
\]
\end{thm}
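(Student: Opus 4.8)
The plan is to give an explicit lower bound by forcing the innovations $Y_n$ into a small interval that keeps the process trapped in a bounded region below the barrier. Concretely, suppose we can find an interval $J$ with $\mathbb{P}(Y_1 \in J) > 0$ such that whenever $X_{n-1}, \dots, X_{n-p}$ all lie in some fixed bounded set $K \subseteq (-\infty, 0]$ (or $(-\infty,x]$), and $Y_n \in J$, then $X_n = \sum_k a_k X_{n-k} + Y_n$ again lies in $K$. If such $(J, K)$ exist, then on the event $\{Y_1 \in J, \dots, Y_N \in J\}$ we have $X_n \in K \subseteq (-\infty, 0]$ for all $n = 1, \dots, N$ (using the convention $X_n = 0$ for $n \le 0$, so the ``initial'' values are trivially in $K$ provided $0 \in K$), whence $p_N \ge \mathbb{P}(Y_1 \in J)^N = c^N$ with $c := \mathbb{P}(Y_1 \in J) \in (0,1)$.

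The key computation is to check that the condition $\sum_{k=1}^p |a_k| < 1$ makes such a trap exist. Take $K = [\alpha, 0]$ for a suitable $\alpha < 0$. If $X_{n-k} \in [\alpha, 0]$ for all $k$, then $\sum_{k=1}^p a_k X_{n-k}$ ranges over an interval; its most negative possible value is $\alpha a_+$ is $a_+ := \sum_{k \in I_+} a_k \ge 0$ (contributions from positive $a_k$ times the most negative $X$) plus $0$ from negative $a_k$, i.e.\ the sum lies in $[\alpha a_+, -\alpha a_-] = [\alpha a_+, \alpha a_-]$ since $a_- \le 0$. Wait --- more carefully: $a_k X_{n-k}$ for $k \in I_+$ lies in $[\alpha a_k, 0]$ and for $k \in I_-$ lies in $[0, \alpha a_k]$, so the total lies in $[\alpha a_+, \alpha a_-]$ (recalling $a_- \le 0$ so $\alpha a_- \ge 0$). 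To land $X_n$ in $[\alpha, 0]$ it suffices that $Y_n \in [\alpha - \alpha a_-, \alpha a_+ \,\text{-adjusted}]$; unwinding, one needs $Y_n \in [\alpha(1 - a_+), \alpha|a_-|]$ to be contained in an interval forcing $X_n \le 0$ and $X_n \ge \alpha$. For this target interval to be nonempty we need $\alpha(1-a_+) \le \alpha |a_-|$, i.e.\ $1 - a_+ \ge |a_-|$ (dividing by $\alpha < 0$ reverses the inequality), i.e.\ $a_+ + |a_-| \le 1$, which is exactly $\sum |a_k| \le 1$, and strictness gives room. Then $c = \sup_{\alpha < 0} \mathbb{P}(Y_1 \in [\alpha(1-a_+), \alpha|a_-|])$, and this supremum is positive because $Y_1$ is nondegenerate: for suitable $\alpha$ the interval has positive length and positive probability (one should note $c < 1$ since a single interval of this parametrized family cannot carry all the mass of a nondegenerate law --- or simply because $p_N \le 1$ forces $c \le 1$, and strictness can be argued separately or absorbed).

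For the general statement (without the positivity assumption on some $a_k$), if all $a_k \le 0$ then one can still trap the process: the recursion with $X_{n-k} \in [\alpha,0]$ gives $\sum a_k X_{n-k} \in [0, \alpha \sum a_k] = [0, \alpha a_-]$ roughly, and the same interval argument works; more simply, $\sum |a_k| < 1$ always yields \emph{some} nonempty trapping interval $J$ by the same contraction estimate $|X_n| \le \sum |a_k| \cdot \max_k |X_{n-k}| + |Y_n| \le \theta \alpha' + \sup J$ where $\theta := \sum |a_k| < 1$ --- choosing $J$ a small symmetric interval around a point and $\alpha'$ large enough that $\theta \alpha' + \sup J \le \alpha'$ closes the induction, and shifting by $x$ handles $p_N(x)$. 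I would organize the proof as: (i) state the trapping lemma abstractly --- if $0 \in K$, $K$ bounded, $K \subseteq (-\infty,0]$, and $\sum a_k K + J \subseteq K$ with $\mathbb{P}(Y_1 \in J) > 0$, then $p_N \ge \mathbb{P}(Y_1 \in J)^N$; (ii) construct $(K,J)$ from $\theta = \sum |a_k| < 1$; (iii) in the case some $a_k > 0$, optimize the interval to get the explicit formula for $c$. The main obstacle is purely bookkeeping: getting the signs right in the interval arithmetic for $\sum_{k} a_k X_{n-k}$ when the $a_k$ have mixed signs, and verifying that the claimed explicit interval $[\alpha(1-a_+), \alpha|a_-|]$ is exactly the sharp trapping interval; there is no analytic difficulty beyond that.
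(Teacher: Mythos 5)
Your proposal is correct and takes essentially the same route as the paper: the paper's proof is also a trapping argument that keeps $X_n$ in $[\alpha,0]$ (via recursively defined lower bounds $\gamma_n \ge \alpha$) by forcing each innovation into the interval $[\alpha(1-a_+),\alpha\abs{a_-}]$, whose nonemptiness is precisely the condition $\sum_{k=1}^p \abs{a_k} \le 1$. The only cosmetic difference is that the paper lets the first innovation range over the larger interval $[\alpha,0]$ before switching to the optimized interval, which does not affect the exponential rate.
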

\begin{proof}
The goal is to find intervals $([\alpha_n,\beta_n])_{n \geq 1}$ such that 
\begin{equation}\label{eq:proof_inclusion_exp_lower_bound}
   \bigcap_{k=1}^n \set{Y_k \in [\alpha_k,\beta_k]} \subseteq  \bigcap_{k=1}^n \set{X_k \in [\gamma_k,0]}, \quad n \geq 1.
\end{equation}
If \eqref{eq:proof_inclusion_exp_lower_bound} holds and $\pr{}{Y_n \in [\alpha_n,\beta_n]} \geq c > 0$ for all $n \geq N_0$, we immediately obtain that $p_N \succsim c^N$.\\
Using the recursive definition of $X$, we can iteratively define the sequences $(\alpha_n)_{n \geq 1}$, $(\beta_n)_{n \geq 1}$,$ (\gamma_n)_{n \geq 1}$ as follows: Start with $\gamma_1 = \alpha_1 < \beta_1 \leq 0$. Define successively (with the convention $\gamma_n = 0$ for $n \leq 0$)
\[
   \beta_k := - \sum_{j \in I_-} a_j \gamma_{k-j}, \quad \alpha_k < \beta_k, \quad \gamma_k := \sum_{j \in I_+} a_j \gamma_{k-j} + \alpha_k.
\]
It is clear that $\gamma_k \leq 0$ and $\beta_k \leq 0$ for all $k$. We claim that \eqref{eq:proof_inclusion_exp_lower_bound} holds for this choice of $(\alpha_n),(\beta_n)$ and $(\gamma_n)$. For $n=1$, this is obvious, and inductively, if the statement holds for some $n-1 \geq 1$, we have that
\[
   X_n = \sum_{j=1}^p a_k X_{n-j} + Y_n \leq \sum_{j \in I_-}^p a_j X_{n-j} + \beta_n \leq \sum_{j \in I_-} a_j \gamma_{n-j} + \beta_n = 0,
\]
and 
\[
   X_n = \sum_{j=1}^p a_k X_{n-j} + Y_n \geq \sum_{j \in I_+}^p a_j X_{n-j} + \alpha_n \geq \sum_{j \in I_+} a_j \gamma_{n-j} + \alpha_n = \gamma_n. 
\]
Note that the above inequalities hold even if $I_+ = \emptyset$ or if $I_- = \emptyset$. Fix $\alpha_1=\gamma_1 < \beta_1 = 0$ and let $\alpha_k = -\alpha_1 (a_+ - 1)$ for all $k \geq 2$. We claim that $\gamma_k \geq \alpha_1$. Inductively, if the claim holds for all $k \leq n-1$, we have that
\[
   \gamma_n = \sum_{j \in I_+} a_j \gamma_{n-j} -\alpha_1 (a_+ - 1) \geq \alpha_1 a_+ - \alpha_1 (a_+ - 1) = \alpha_1.
\]
It follows that $\beta_n \geq - \alpha_1 a_-$ and in particular, $\alpha_k < \beta_k$ since 
\begin{align*}
   \alpha_k  - \beta_ k \leq -\alpha_1 (a_+ - 1) + \alpha_1 a_- = -\alpha_1 \left( \sum_{k=1}^p \abs{a_k} - 1 \right) < 0.
\end{align*}
In view of \eqref{eq:proof_inclusion_exp_lower_bound}, we obtain that
\begin{align*}
    \pr{}{\sup_{k=1,\dots,n} X_k \leq 0} &\geq \prod_{k=1}^n \pr{}{Y_k \in [\alpha_k,\beta_k]} \\ 
&\geq \pr{}{Y_1 \in [\alpha_1,0]} \pr{}{Y_1 \in [-\alpha_1(a_+ - 1),-\alpha_1 a_-]}^{n-1} 
\end{align*}
\end{proof}
\begin{remark}
   In general, there is no reason to believe that the lower bound of Theorem~\ref{thm:AR_p_exp_lower_bound} is sharp.
\end{remark}
\begin{cor}
Let $(Y_n)_{n \geq 0}$ be a sequence of i.i.d.\ standard Gaussian random variables. Using the notation of Theorem~\ref{thm:AR_p_exp_lower_bound}, if $I_-$ and $I_+$ are nonempty, we have that $p_N \geq c^N$ where
\[
	c = \pr{}{ \alpha^* (1-a_+) \leq Y_1 \leq \alpha^* \abs{a_-} } = \pr{}{ -\sqrt{\frac{- \log A^2}{1-A^2}} \leq Y_1 \leq  - A \sqrt{\frac{- \log A^2}{1-A^2}} } 
\]
and
\[
	\alpha^* := -\sqrt{\frac{\log(1-a_+)^2 - \log \abs{a_-}^2}{(1-a_+)^2- \abs{a_-}^2}} < 0, \quad A := \frac{\abs{a_-}}{1- a_+} \in (0,1).
\]
\end{cor}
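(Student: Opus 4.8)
The plan is to make explicit the constant
\[
c \;=\; \sup\set{\pr{}{Y_1 \in [\alpha(1-a_+),\,\alpha\abs{a_-}]} : \alpha < 0}
\]
produced by Theorem~\ref{thm:AR_p_exp_lower_bound}, in the case that $Y_1$ is standard Gaussian and $I_+,I_-$ are both nonempty. First I would record the elementary order relations: since $\sum_{k=1}^p\abs{a_k}<1$ and $I_+,I_-\neq\emptyset$, we have $a_+ = \sum_{k\in I_+}\abs{a_k}\in(0,1)$ and $\abs{a_-}=\sum_{k\in I_-}\abs{a_k}\in(0,1)$ with $a_+ + \abs{a_-}<1$, hence $0<\abs{a_-}<1-a_+$ and $A:=\abs{a_-}/(1-a_+)\in(0,1)$. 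Consequently, for every $\alpha<0$ one has $\alpha(1-a_+)<\alpha\abs{a_-}<0$, so $[\alpha(1-a_+),\alpha\abs{a_-}]$ is a genuine nonempty interval and $\pr{}{Y_1\in[\alpha(1-a_+),\alpha\abs{a_-}]}=\Phi(\alpha\abs{a_-})-\Phi(\alpha(1-a_+))=:g(\alpha)$, where $\Phi,\phi$ denote the standard normal distribution function and density.

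Next I would solve the one-variable optimisation $\sup_{\alpha<0}g(\alpha)$. The function $g$ is smooth and strictly positive on $(-\infty,0)$ and satisfies $g(\alpha)\to 0$ both as $\alpha\uparrow 0$ and as $\alpha\downarrow-\infty$, so its supremum is attained at an interior critical point. Differentiating gives $g'(\alpha)=\abs{a_-}\phi(\alpha\abs{a_-})-(1-a_+)\phi(\alpha(1-a_+))$; setting $g'(\alpha)=0$, substituting $\phi(t)=(2\pi)^{-1/2}e^{-t^2/2}$ and taking logarithms turns the stationarity condition into the \emph{linear} equation in $\alpha^2$
\[
\frac{\alpha^2}{2}\bigl((1-a_+)^2-\abs{a_-}^2\bigr)=\log(1-a_+)-\log\abs{a_-}.
\]
Because the right-hand side is positive and $(1-a_+)^2>\abs{a_-}^2$, this has a unique solution with $\alpha<0$, namely the $\alpha^*$ displayed in the statement; since $g'$ therefore vanishes at exactly one point of $(-\infty,0)$, that point is the global maximiser and $c=g(\alpha^*)=\pr{}{\alpha^*(1-a_+)\le Y_1\le\alpha^*\abs{a_-}}$.

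It then only remains to rewrite $\alpha^*(1-a_+)$ and $\alpha^*\abs{a_-}$ in terms of $A$. Using $\log(1-a_+)^2-\log\abs{a_-}^2=-\log A^2$ and $(1-a_+)^2-\abs{a_-}^2=(1-a_+)^2(1-A^2)$, the radicand defining $\alpha^*$ equals $(1-a_+)^{-2}\,(-\log A^2)/(1-A^2)$, whence $\alpha^*(1-a_+)=-\sqrt{(-\log A^2)/(1-A^2)}$ and $\alpha^*\abs{a_-}=-A\sqrt{(-\log A^2)/(1-A^2)}$, giving the second expression for $c$. The only slightly delicate point is the claim that the interior critical point furnishes the global maximum rather than a minimum or saddle, but this is immediate from the boundary behaviour $g(0^-)=g(-\infty)=0<\sup g$ together with the uniqueness of the critical point; all remaining computations are routine substitutions.
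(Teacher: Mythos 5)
Your proposal is correct and follows essentially the same route as the paper: the paper likewise reduces to maximising $\Phi(\alpha\abs{a_-})-\Phi(\alpha(1-a_+))$ over $\alpha\le 0$ and states (without the calculus details you supply) that the unique maximiser is $\alpha^*$. Your added verifications of the boundary behaviour, uniqueness of the critical point, and the algebraic rewriting in terms of $A$ are exactly the "not hard to verify" steps the paper leaves implicit.
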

\begin{proof}
By Theorem \ref{thm:AR_p_exp_lower_bound}, we have to determine 
\[ 
	\sup_{ \alpha \leq 0} \pr{}{ \alpha(1-a_+) \leq Y_1 \leq \alpha\abs{a_-} } = \sup_{ \alpha \leq 0} \set{ \Phi(\alpha\abs{a_-}) - \Phi(\alpha (1-a_+))},
\]
where $\Phi$ is the cdf of a standard normal random variable. It is not hard to verify that the unique maximum is attained at 
\[
 \alpha^* := - \sqrt{\frac{\log(1-a_+)^2 - \log \abs{a_-}^2}{(1-a_+)^2- \abs{a_-}^2}} < 0.
\]
\end{proof}
\section{Polynomial order}\label{sec:poly}
If $X$ is an AR($2$)-process and $\E{}{Y_1} = 0$, it is known that $p_N$ decays polynomially if $X$ is a centered random walk ($a_1 = 1, a_2 = 0$) or an integrated random walk ($a_1 = 2,a_2 = -1$) under suitable moment conditions. To be more precise, if $S_n = \sum_{k=1}^n Y_k$ is a random walk and $\E{}{Y_1} = 0$, it holds that
\[
   \pr{}{\sup_{n=1,\dots,N} S_k \leq 0} = N^{-(1 -\theta) + o(1)} \quad \text{ for some } \theta \in (0,1) \Longleftrightarrow \pr{}{S_N \leq 0} \to \theta \in (0,1),
\]
see e.g.\ \cite{aurzada-simon:2012}. Moreover, the process $X_n = 2 X_{n-1} - X_{n-1} + Y_n$ is given by $X_n = \sum_{k=1}^n (n-k+1) Y_k = \sum_{k=1}^n S_k$ where $(S_n)_{n \geq 1}$ is the usual random walk. $X$ is called integrated random walk (IRW). Several authors have studied the asymptotic behaviour of $p_N$ in that case if $\E{}{Y_1} = 0$. We refer to the recent article of \cite{dembo-ding-gao:2012} and the references therein. In particlar, it is shown in \cite{dembo-ding-gao:2012} that $p_N \asymp N^{-1/4}$ if $\E{}{Y_1} = 0$ and $\E{}{Y_1^2} \in (0,\infty)$.
\subsection{Integrated processes}
In this subsection, we will prove that $p_N = N^{-1/2 + o(1)}$ under suitable moment conditions if $a_1 + a_2 = 1$ and $\abs{a_2} < 1$. As we will see shortly, these AR(2)-processes can be written as integrated AR(1)-processes. \\ 

Let us begin by characterizing the behaviour of the sequence $(c_n)_{n \geq 0}$ for such $a_1,a_2$. Instead of manipulating the explicit expression for $c_n$ to determine these values of $a_1$ and $a_2$, we give a short proof of the following lemma.
\begin{lemma}\label{lem:diff_eq_sum_eq_1}
    The sequence $(c_n)$ converges to a constant $c \neq 0$ if and only if $a_1 + a_2 = 1$ and $\abs{a_2} < 1$. In that case, $\lim_{n \to \infty} c_n = 1/(1+a_2)$. Moreover, if $a_1 + a_2 = 1$,  $c_n = (1 - (-a_2)^{n+1})/(1 + a_2)$ if $a_2 \neq -1$ and $c_n = n+1$ if $a_2=-1$ for $n \geq 0$.
 \end{lemma}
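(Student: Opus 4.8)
The plan is to avoid the explicit formula \eqref{eq:sol_diff_eq} altogether and argue directly from the recursion $c_n = a_1 c_{n-1} + a_2 c_{n-2}$ with $c_0 = 1$, $c_{-1} = 0$, using the telescoped sequence $d_n := c_n - c_{n-1}$, which turns out to satisfy a trivial first-order recursion once $a_1 + a_2 = 1$.

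\emph{Necessity.} Suppose $c_n \to c \neq 0$. Letting $n \to \infty$ in the recursion gives $c = (a_1 + a_2)c$, and since $c \neq 0$ this forces $a_1 + a_2 = 1$. Substituting $a_1 = 1 - a_2$, one computes
\[
   d_n = c_n - c_{n-1} = (a_1 - 1)c_{n-1} + a_2 c_{n-2} = -a_2(c_{n-1} - c_{n-2}) = -a_2\, d_{n-1},
\]
and since $d_1 = c_1 - c_0 = a_1 - 1 = -a_2$, this yields $d_n = (-a_2)^n$ for all $n \geq 1$. Convergence of $(c_n)$ forces $d_n \to 0$, hence $\abs{a_2} < 1$; note here that $a_2 \in \mathbb{R}$, so the borderline cases $a_2 = 1$ and $a_2 = -1$ give $d_n = (-1)^n$ and $d_n = 1$ respectively, neither tending to $0$.

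\emph{Sufficiency and the closed form.} Conversely, assume $a_1 + a_2 = 1$. The identity $d_n = (-a_2)^n$ derived above used only this hypothesis, so summing the resulting geometric series gives
\[
   c_n = c_0 + \sum_{k=1}^n d_k = \sum_{k=0}^n (-a_2)^k =
   \begin{cases}
      \dfrac{1 - (-a_2)^{n+1}}{1 + a_2}, & a_2 \neq -1, \\[2mm]
      n+1, & a_2 = -1,
   \end{cases}
\]
which is precisely the asserted formula. If moreover $\abs{a_2} < 1$, then $(-a_2)^{n+1} \to 0$ and $c_n \to 1/(1+a_2) \neq 0$, while for $a_2 = -1$ one has $c_n = n+1 \to \infty$; thus convergence to a nonzero constant indeed singles out exactly the region $a_1 + a_2 = 1$, $\abs{a_2} < 1$.

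The argument is genuinely short, and the only place requiring a little care is the necessity step: one must rule out $\abs{a_2} \geq 1$. Working with $d_n = (-a_2)^n$ instead of the two-term expression \eqref{eq:sol_diff_eq} is what makes this painless, since it reduces the case distinction (distinct roots versus the double root at $a_2 = -1$, and $\abs{a_2} = 1$ versus $\abs{a_2} > 1$) to the single observation that a real geometric sequence tends to $0$ iff its ratio has modulus $< 1$.
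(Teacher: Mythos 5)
Your proof is correct and follows essentially the same route as the paper's: both pass to the limit in the recursion to get $a_1+a_2=1$, observe that the differences satisfy $c_{n+1}-c_n=-a_2(c_n-c_{n-1})$ so that $c_n-c_{n-1}=(-a_2)^n$, and then telescope the geometric series to obtain the closed form and the convergence criterion $\abs{a_2}<1$.
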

\begin{proof}
Assume that $a_1 + a_2 = 1$. Then $c_{n+1} = (a_1 + a_2 - a_2) c_n + a_2 c_{n-1} = c_n - a_2(c_n - c_{n-1})$, i.e.\ $c_{n+1} - c_n = -a_2 (c_n - c_{n-1})$. Iteration yields $c_{n+1} - c_n = (-a_2)^n (c_1 - c_0)= (-a_2)^{n+1}$. Hence,
 \[
    c_n = 1 + \sum_{k=1}^n (c_k - c_{k-1}) = 
\begin{cases}
   1 + \sum_{k=1}^n (-a_2)^k = \frac{1 - (-a_2)^{n+1} }{1-(-a_2)}, &\quad a_2 \neq -1, \\
  n+1, &\quad a_2 = -1,
\end{cases}
 \]
and therefore, $c_n \to c = 1/(1+a_2) \neq 0$ if and only if $\abs{a_2} < 1$. On the other hand, if $\lim c_n = c \neq 0$, then the recursion equation implies that $c = a_1c + a_2 c$, i.e.\ $a_1 + a_2 = 1$. By the preceding lines, convergence implies that $\abs{a_2} < 1$.
\end{proof} 
In particular, the preceding lemma shows that
\[
   X_n = \frac{1}{1 + a_2} \left( \sum_{k=1}^n Y_k - \sum_{k=1}^n (-a_2)^{n-k+1} Y_k \right), \quad n \geq 1,
\]
and since $\abs{a_2} < 1$, one expects that the behaviour of $X$ is similar to that of a random walk.\\
Moreover, AR($2$)-processes with $a_1 + a_2 = 1$ and $\abs{a_2} < 1$ can also be regarded as integrated AR($1$)-processes. Let us explain this in more detail.\\
If $\tilde{X}$ is AR($p$) with coefficients $a_1,\dots,a_p$, set $X_n := \sum_{k=1}^n \tilde{X}_k$. 
\begin{align*}
   X_n &= X_{n-1} + \sum_{k=1}^p a_k \tilde{X}_{n-k} + Y_n = X_{n-1} + \sum_{k=1}^p a_k (X_{n-k} - X_{n-k-1}) + Y_n \\
&= (1 + a_1) X_{n-1} + \sum_{k=2}^{p} (a_k - a_{k-1}) X_{n-k} -a_p X_{n-p-1} + Y_n, 
\end{align*}
i.e.\ $X$ is AR($p+1$) and the transfomation of the coefficients  $T_p \colon \mathbb{R}^p \to \mathbb{R}^{p+1}$ is given by
\begin{equation}\label{eq:coeff_of_int}
   T_p(a_1,\dots,a_p) = (a_1 + 1,a_2 - a_1,\dots,a_p - a_{p-1}, - a_p).
\end{equation}
Note that $T_p$ is one-to-one and that $T_p(\mathbb{R}^p)$ is an affine subspace of $\mathbb{R}^{p+1}$. \\
Now, if $\tilde{X}$ is AR(1) with $\tilde{X}_n = \rho \tilde{X}_{n-1} + Y_n$, we have that $X$ with $X_n = \sum_{k=1}^n \tilde{X}_k$ is AR(2) with coefficients $T_1(\rho) = (\rho - 1,-\rho) =: (a_1,a_2)$. In other words, AR(2)-processes with $a_1 + a_2 = 1$ and $\abs{a_2} < 1$ are integrated AR(1)-processes with $\abs{\rho} < 1$. \\
The next theorem states conditions under which the survival probability of an integrated process behaves like $N^{-1/2 + o(1)}$. 
\begin{thm}\label{thm:int_of_summable_seq}
Assume that $\E{}{Y_1} = 0$. Let $\tilde{X}_n = \sum_{k=1}^n \tilde{c}_{n-k} Y_k$ where $\sum_{k=1}^\infty k \abs{\tilde{c}_k} < \infty$ and $\sum_{k=0}^\infty \tilde{c}_k \neq 0$. Set $X_n := \sum_{k=1}^n \tilde{X}_k$. 
\begin{enumerate}
   \item If $\abs{Y_1} \leq M < \infty$ a.s., there is $x_0 \geq 0$ such that for all $x \geq x_0$, it holds that
\[
   \pr{}{\sup_{n=1,\dots,N} X_n \leq x} \asymp N^{-1/2}, \quad N \to \infty.
\] 
\item If $\E{}{\exp(\abs{Y_1}^\alpha)} < \infty$, it holds for all $x \geq 0$ that
\[
   \pr{}{\sup_{n=1,\dots,N} X_n \leq x} \precsim N^{-1/2} (\log N)^{1/\alpha}, \quad N \to \infty.
\]
\item If $\E{}{\exp(\abs{Y_1}^\alpha)} < \infty$ and $\sum_{k=0}^n \tilde{c}_k \geq 0$ for all $n \geq 0$, it holds for all $x \geq 0$ that
\[
    \pr{}{\sup_{n=1,\dots,N} X_n \leq x} \succsim N^{-1/2} (\log N)^{-1/\alpha + o(1)}, \quad N \to \infty.
\]
\end{enumerate}
\end{thm}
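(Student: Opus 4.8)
The plan is to reduce all three claims to the classical one-sided exit behaviour of an ordinary centered random walk, the ``integrating'' being absorbed into a lower-order perturbation.

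\emph{Reduction.} Put $C_m:=\sum_{i=0}^m\tilde c_i$, $c:=\sum_{i=0}^\infty\tilde c_i\neq0$ and $r_m:=c-C_m=\sum_{i>m}\tilde c_i$. Interchanging summation in $X_n=\sum_{j=1}^n\tilde X_j$ gives $X_n=\sum_{k=1}^nC_{n-k}Y_k$, hence
\[
   X_n = c\,S_n - R_n,\qquad S_n:=\sum_{k=1}^nY_k,\qquad R_n:=\sum_{k=1}^nr_{n-k}Y_k,
\]
with $S$ an ordinary centered random walk. The weight-summability hypothesis makes $R$ negligible: $\Lambda:=\sum_m\abs{r_m}\le\sum_ii\abs{\tilde c_i}<\infty$, so $\sup_{n\le N}\abs{R_n}\le\Lambda\max_{k\le N}\abs{Y_k}$. (This is exactly the feature that fails for the integrated random walk $a_1=2,a_2=-1$, where $\tilde c_k\equiv1$.) We may assume $c>0$; in part~(3) this is automatic since $c=\lim_mC_m\ge0$.

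\emph{Upper bounds, parts (1) and (2).} On $\set{\sup_{n\le N}X_n\le x}$ we have, for any $u>0$, either $\sup_{n\le N}\abs{R_n}>u$ or $\sup_{n\le N}S_n\le(x+u)/c$, so
\[
   \pr{}{\sup_{n\le N}X_n\le x}\le\pr{}{\sup_{n\le N}S_n\le(x+u)/c}+N\,\pr{}{\abs{Y_1}>u/\Lambda}.
\]
For the first term I would use the sharp random-walk estimate $\pr{}{\sup_{n\le N}S_n\le y}\precsim(1+y)N^{-1/2}$, uniform in $y\ge0$ (ladder-variable/renewal arguments, or Berry--Esseen plus the Sparre--Andersen asymptotics at $y=0$). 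For part~(1) take $u$ just above $\Lambda M$: the tail term vanishes and the barrier is constant, giving $p_N(x)\precsim N^{-1/2}$; the matching lower bound comes from $X_n\le cS_n+\Lambda M$ together with $\pr{}{\sup_{n\le N}S_n\le0}\succsim N^{-1/2}$ once $x\ge x_0:=\Lambda M$. For part~(2) take $u=u_N:=\Lambda(2\log N)^{1/\alpha}$, so $N\,\pr{}{\abs{Y_1}>u_N/\Lambda}\le\E{}{e^{\abs{Y_1}^\alpha}}/N$ while the barrier is $\asymp(\log N)^{1/\alpha}$, yielding $p_N(x)\precsim N^{-1/2}(\log N)^{1/\alpha}$.

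\emph{Lower bound, part (3).} Now $\sup_{n\le N}\abs{R_n}$ is typically of order $(\log N)^{1/\alpha}$, so ``force $S$ below a negative barrier'' is far too costly; instead I would use a two-scale construction. Fix $\epsilon>0$, let $m=m_N:=\lceil(\log N)^{2/\alpha+\epsilon}\rceil$, and in a first phase ask that $X$ stay $\le x$ on $\set{1,\dots,m}$ with, in addition, $X_m\le-\delta\sqrt m$ for a small $\delta>0$. Since $C_m\to c$ and $\sum_m\abs{C_m-c}<\infty$, the contribution of $Y_1,\dots,Y_m$ to every later $X_{m+i}$ is then $\le cS_m+O((\log m)^{1/\alpha})\le-c_1\sqrt m$, so $X_{m+i}\le X'_i-c_1\sqrt m$ where $X'_i:=\sum_{k=m+1}^{m+i}C_{m+i-k}Y_k$ is an independent copy of the process. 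It therefore suffices that $\sup_{i}X'_i\le x+c_1\sqrt m=:y_N$; and since $\sqrt m\asymp(\log N)^{1/\alpha+\epsilon/2}$ eventually exceeds $\Lambda(2\log N)^{1/\alpha}$, on the high-probability event $\set{\sup_i\abs{R'_i}\le\Lambda(2\log N)^{1/\alpha}}$ (with $X'_i=cS'_i-R'_i$) the induced barrier for $S'$ is nonnegative, whence $\pr{}{\sup_iX'_i\le y_N}\succsim N^{-1/2}$. By independence of the two blocks of innovations, $p_N(x)\succsim N^{-1/2}\cdot\pr{}{\sup_{n\le m}X_n\le x,\ X_m\le-\delta\sqrt m}$, and the last factor is bounded below by iterating the same split on the scale $m$ (replacing $s$ by $\lceil(\log s)^{2/\alpha+\epsilon}\rceil$, retaining an endpoint constraint $\le-\delta\sqrt s$) down to scale $O(1)$, where positivity of the $C_n$ enters: if $Y_1,\dots,Y_s\le-B$ then $X_n\le-B\sum_{j<n}C_j\le-B$ for all $n\le s$. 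The scales decrease like iterated logarithms, so the accumulated polylogarithmic losses are $(\log\log N)^{-O(1)}=(\log N)^{o(1)}$, and $\pr{}{\sup_{n\le m}X_n\le x,X_m\le-\delta\sqrt m}\succsim m^{-1/2}(\log N)^{o(1)}=(\log N)^{-1/\alpha-\epsilon/2+o(1)}$; letting $\epsilon\downarrow0$ gives $p_N(x)\succsim N^{-1/2}(\log N)^{-1/\alpha+o(1)}$. The routine part of all this is the reduction; the genuinely technical points --- and the bulk of the work --- are the uniform maximal estimate for $S$ (also in the refined form, needed in Phase~1, with a prescribed endpoint of order $-\sqrt N$) and verifying at each scale of the lower bound that the slack $\sqrt m$ from one phase dominates the $(\log)^{1/\alpha}$-perturbation of the next and that the endpoint constraint propagates correctly through $X_{m+i}=cS_m+X'_i+(\text{small})$.
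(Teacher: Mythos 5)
Your reduction and parts (1)--(2) follow essentially the paper's own route: write $X_n=\sum_{k=1}^n C_{n-k}Y_k$ with $C_m\to c\neq0$, bound the perturbation by $\Lambda\max_{k\le N}\abs{Y_k}$, and invoke a barrier estimate $\pr{}{\sup_{n\le N}S_n\le y}\precsim (1+y)N^{-1/2}$ uniform in the barrier; the paper proves precisely this (Lemma~\ref{lem:RW_below_f_N}, via association of independent random variables), and your choices $x_0=\Lambda M$ and $u_N=\Lambda(2\log N)^{1/\alpha}$ reproduce its argument. These parts are correct.

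Part (3) contains a genuine gap. Your multi-scale scheme needs, at every scale below the top one, a lower bound of the form $\pr{}{\sup_{i\le s'}X'_i\le y,\ X'_{s'}\le-\delta\sqrt{s}}\succsim s^{-1/2}$, i.e.\ survival below a nonnegative (polylogarithmic) barrier \emph{together with} an endpoint of order $-\sqrt{s}$. The recursion you describe does not supply this: when you split the scale-$s$ problem carrying the endpoint constraint $\le-\delta\sqrt{s}$ at $m'=\lceil(\log s)^{2/\alpha+\epsilon}\rceil$, the second block must again both survive and end below $-\delta\sqrt{s}+O(\sqrt{m'})$, so the endpoint-constrained estimate reappears at essentially the same scale $s$; only the first factor is reduced. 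As written the induction is therefore circular unless you separately prove or cite the fluctuation-theoretic fact $\pr{}{\sup_{n\le s}S_n\le0,\ S_s\le-\delta\sqrt{s}}\asymp s^{-1/2}$ (true, e.g.\ via the invariance principle for walks conditioned to stay nonpositive, but a strictly heavier input than anything needed for (1)--(2); note that naive splitting into two halves plus association only yields $s^{-1}$). The paper's proof is engineered to avoid exactly this: since $C_n\ge0$, association gives $p_N\ge p_{N_0}\,\pr{}{\sup_{N_0<n\le N}X_n\le0}$ with $N_0\approx(\log N)^{2/\alpha}$; inside the second factor the required drop is placed on the increment over the first $N_0$ steps, using $\pr{}{\sup_{N_0<n\le N}S_n\le-C(\log N)^{1/\alpha}}\ge\pr{}{\sup_{n\le N}S_n\le0}\,\pr{}{S_{N_0}\le-C(\log N)^{1/\alpha}}$, i.e.\ a plain Sparre--Andersen bound times a CLT probability, and $p_{N_0}$ is then handled by bootstrapping from the trivial bound $\pr{}{Y_1\le0}^{N_0}$ through the resulting inequality. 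To repair your phase~1, either import the conditioned-walk endpoint estimate explicitly, or restructure it along the paper's lines.
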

\begin{proof}
  First, note that 
\[
   X_n = \sum_{k=1}^n \sum_{j=1}^k \tilde{c}_{k-j}Y_j = \sum_{j=1}^n Y_j \sum_{k=j}^n \tilde{c}_{k-j} = \sum_{j=1}^n Y_j \sum_{k=0}^{n-j} \tilde{c}_k = \sum_{k=1}^n c_{n-k} Y_k
\]
where $c_n := \sum_{k=0}^n \tilde{c}_k \to c = \sum_{k=0}^\infty \tilde{c}_k \neq 0$. Set $S_n := \sum_{k=1}^n c Y_k$, so that for all $n \geq 1$,
\begin{equation*}
   \abs{S_n - X_n} = \abs{\sum_{k=1}^{n} (c - c_{n-k}) Y_k },
\end{equation*}
In particular, if $\abs{Y_1} \leq M < \infty$ a.s., it follows that 
\[
   \abs{S_n - X_n} \leq M \sum_{k=0}^{n-1} \abs{c - c_k } \leq M \sum_{k=0}^{n-1} \sum_{j=k+1}^\infty \abs{\tilde{c}_j} = M \sum_{j=1}^\infty j \abs{\tilde{c}_j} =: \tilde{M} < \infty.
\]
Hence, we get for $x \geq \tilde{M}$ that
\[
   \pr{}{\sup_{n=1,\dots,N} S_n \leq 0} \leq \pr{}{\sup_{n=1,\dots,N} X_n \leq x}  \leq \pr{}{\sup_{n=1,\dots,N} S_n \leq x + \tilde{M}},
\]
and the proof of part 1.\ is complete since $S$ is a centered random walk with finite variance.\\
The proof of part 2.\ is similar. Let $E_N := \set{\abs{Y_k} \leq (2 \log N)^{1/\alpha}, k=1,\dots,N}$. On $E_N$, we get as above that
\begin{equation}\label{eq:diff_X_int_X}
    \abs{S_n - X_n} \leq (2 \log N)^{1/\alpha} \sum_{k=0}^{n-1} \abs{c - c_k} \leq (2 \log N)^{1/\alpha} \sum_{j=1}^\infty j \abs{\tilde{c}_j} =: C (\log N)^{1/\alpha}.
\end{equation}
Hence,
\begin{align*}
   \pr{}{\sup_{n=1,\dots,N} X_n \leq x} &\leq \pr{}{E_N^c} + \pr{}{\sup_{n=1,\dots,N} S_n \leq x + C ( \log N)^{1/\alpha}}.
\end{align*}
By Chebyshev's inequality, 
\[
   \pr{}{E_N^c} \leq N \pr{}{\abs{Y_1} \geq (2 \log N)^{1/\alpha}} \leq N \E{}{\exp(\abs{Y_1}^\alpha)} N^{-2} \asymp N^{-1}.
\]
Finally, by Lemma~\ref{lem:RW_below_f_N} below, it holds that 
\[
   \pr{}{\sup_{n=1,\dots,N} S_n \leq x + C \, (\log N)^{1/\alpha}} \precsim (\log N)^{1/\alpha} \, N^{-1/2},
\]
which proves part 2.\\
It suffices to prove the lower bound of part 3 for $x=0$. Moreover, we use that independent random variables $Y_1,\dots,Y_N$ are associated for every $N$, cf.\ \cite{esary-proschan-walkup:1967}. Since $c_n = \sum_{k=0}^n \tilde{c}_k \geq 0$ for every $n$ by assumption, the function
\begin{align*}
   f_{K,L}(x_1,\dots,x_N) \mapsto 
\begin{cases}
   -1, &\quad \sum_{k=1}^n c_{n-k} x_k \leq 0 \quad \text{for all } n=K,\dots,L\\
  0, &\quad \text{else},
\end{cases}
\end{align*}
is nondecreasing in every component. Hence, the very definition of associated random variables implies for $1 \leq N_0 < N$ that 
\[
   \mathrm{cov}\left(f_{1,N_0}(Y_1,\dots,Y_N), f_{N_0+1,N}(Y_1,\dots,Y_N)\right) \geq 0,
\]
or equivalently,
\[
   \pr{}{\sup_{n=1,\dots,N} X_n \leq 0} \geq  \pr{}{\sup_{n=1,\dots,N_0} X_n \leq 0}  \pr{}{\sup_{n=N_0+1,\dots,N} X_n \leq 0}.
\]
Hence, we can bound the survival probability $p_N$ of $X$ from below as follows:
\begin{align}
   p_N &\geq p_{N_0} \cdot \pr{}{\sup_{n=N_0+1,\dots,N} X_n \leq 0, E_N} \notag \\
&\geq p_{N_0} \cdot \pr{}{\sup_{n=N_0+1,\dots,N} S_n \leq -C \, (\log N)^{1/\alpha}, E_N}. \label{eq:prelim_bound_0}
\end{align}
Note that we have used \eqref{eq:diff_X_int_X} in the second inequality. Next,
\begin{align*}
&\pr{}{\sup_{n=N_0+1,\dots,N} S_n \leq -C \, (\log N)^{1/\alpha}, E_N} \\
&\quad \geq \pr{}{\sup_{n=N_0+1,\dots,N} S_n \leq - C \, (\log N)^{1/\alpha}} - \pr{}{E_N^c}  \\
&\quad \geq \pr{}{\sup_{n=N_0+1,\dots,N} S_n - S_{N_0} \leq 0, S_{N_0} \leq - C \, (\log N)^{1/\alpha}} - \pr{}{E_N^c} \\
&\quad \geq \pr{}{\sup_{n=1,\dots,N} S_n \leq 0} \pr{}{ S_{N_0} \leq - C \, (\log N)^{1/\alpha}} - \pr{}{E_N^c}.
\end{align*}
Let $N_0 := \lfloor \log N \rfloor^{2/\alpha}$. Then $\pr{}{ S_{N_0} \leq - C \, (\log N)^{1/\alpha}} \geq \pr{}{S_{N_0}/\sqrt{N_0} \leq -C}$ and the r.h.s.\ converges to a constant by the CLT. Using the estimate on $\pr{}{E_N^c}$ from above and \eqref{eq:prelim_bound_0}, we have for $N$ large enough that
\begin{equation}\label{eq:prelim_bound_1}
   p_N \geq C_1 \, p_{N_0} \cdot N^{-1/2} = C_1 \, \pr{}{\sup_{n=1,\dots,\lfloor \log N \rfloor^{2/\alpha}} X_n \leq 0} N^{-1/2}.
\end{equation}
Since $c_n \geq 0$ for all $n$, we can now use the trivial estimate $p_{N_0} \geq \pr{}{Y_1 \leq 0}^{N_0} = e^{-\kappa N_0}$ implying for $N$ large enough that
\begin{equation*}
   p_N \geq C_1 \, \exp( - \kappa \lfloor \log N \rfloor^{2/\alpha} ) \, N^{-1/2}.
\end{equation*}
Using this as an a priori estimate for $p_{N_0}$, we get for large $N$ in view of \eqref{eq:prelim_bound_1} that 
\begin{align*}
   p_N &\geq C_1^2 \, \exp( - \kappa \lfloor \log N_0 \rfloor^{2/\alpha} ) \, N_0^{-1/2} \, N^{-1/2} \\
&= C_1^2 \exp\left( - \kappa \lfloor \log \left( \lfloor \log N \rfloor^{1/\alpha} \right) \rfloor^{2/\alpha} \right) \, \lfloor \log N \rfloor^{-1/\alpha} \, N^{-1/2} \\
& \geq C_2 \exp( - C_3 (\log \log N)^{2/\alpha} ) \, (\log N)^{-1/\alpha} \, N^{-1/2}.
\end{align*}
Using this improved estimate again to obtain a lower bound on $p_{N_0}$, we deduce from \eqref{eq:prelim_bound_1} that $p_N \succsim (\log N)^{-1/\alpha + o(1)} N^{-1/2}$.
\end{proof}
\begin{remark}
   One cannot expect to get a useful lower bound without any restriction on the weights $c_n$. For instance, if $Y_1$ takes only values $\pm 1$ and $X_n = \sum_{k=1}^n c_{n-k} Y_k$ with $c_0 = 1, c_1 = -3$, then $\pr{}{X_1 \leq 0, X_2 \leq 0} = \pr{}{X_1 \leq 0, X_1 + X_2 \leq 0} = 0$ . 
\end{remark}
In order to complete the proof of Theorem~\ref{thm:int_of_summable_seq}, let us prove the following lemma.
\begin{lemma}\label{lem:RW_below_f_N}
Let  $(f_n)_{n \ge 1}$ denote a sequence of positive numbers with $f_N \to \infty$ and $f_N/\sqrt{N} \to 0$ as $N \to \infty$. Let $(S_n)_{ n \geq 1}$ denote a centered random walk with $\E{}{S_1^2} \in (0,\infty)$ and let $M_n := \max \set{S_1,\dots,S_n}$. There are a constants $C, N_0$ independent of the sequence $(f_n)$ such that
\[
 \pr{}{ M_N \leq f_N } \le C f_N \, N^{-1/2} , \qquad f_N,N \ge N_0.
\]
\end{lemma}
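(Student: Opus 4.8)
The plan is to reduce the statement to a known estimate for the probability that a centered random walk stays below a slowly growing curve. First I would recall the classical fact that for a centered random walk $S$ with $\E{}{S_1^2} = \sigma^2 \in (0,\infty)$ one has $\pr{}{M_N \leq 0} = \pr{}{\max_{n \le N} S_n \le 0} \asymp N^{-1/2}$; this is the Sparre-Andersen / Feller estimate cited earlier in the paper. The task is to upgrade the barrier from $0$ to $f_N$, where $f_N \to \infty$ but $f_N = o(\sqrt N)$, and to get the bound $C f_N N^{-1/2}$ with $C$ independent of the particular sequence $(f_n)$ — so really the only input should be $\sigma^2$.

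The key step is a ``last-exit'' or ``first descending ladder'' decomposition. The idea: if $M_N \le f_N$, look at the first time the walk drops below $-f_N$ after time, say, $N/2$ (or simply decompose over the value of $S_{\lceil N/2\rceil}$). More cleanly, I would argue as follows. On the event $\{M_N \le f_N\}$, consider $S_{m}$ with $m := \lceil N/2 \rceil$. Either $S_m \le -f_N$ or $S_m \in (-f_N, f_N]$. In the first case, the increments $S_{m+1}-S_m,\dots,S_N - S_m$ form an independent copy of the walk which must stay below $f_N - S_m \ge 2f_N > 0$ up to time $N-m \asymp N$, but we gain nothing directly; instead the useful direction is: on $\{M_N \le f_N\}$, the shifted walk $(S_{m+j} - S_m)_{j \ge 0}$ stays $\le f_N - S_m$, and $S_m \le f_N$, so in particular $\max_{j \le N-m}(S_{m+j}-S_m) \le 2 f_N$; meanwhile $(S_j)_{j \le m}$ stays $\le f_N$. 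By independence,
\[
 \pr{}{M_N \le f_N} \le \pr{}{M_m \le f_N}\,\pr{}{M_{N-m} \le 2 f_N}.
\]
This is not yet a contradiction-free route to the power $N^{-1/2}$; the cleaner classical device is to use the \emph{reversed} walk together with a ladder-epoch argument so that ``$M_N \le f_N$'' forces a ladder height to lie in a window of width $O(f_N)$, whose probability is $O(f_N)$ times the renewal density at scale $\sqrt N$, which is $O(N^{-1/2})$.

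Concretely, I would implement it via the identity $\pr{}{M_N \le x} = \pr{}{\text{the strict ascending ladder height process exceeds } N \text{ steps before leaving } [0,x]}$ and the fact that the ascending ladder height $H$ of a centered finite-variance walk satisfies $\E{}{H} < \infty$ iff $\E{}{(S_1^+)^2}<\infty$ (Chow's theorem), hence $\pr{}{\max_{k\le N} S_k \le x} = \pr{}{\sigma_x > N}$ where $\sigma_x$ is the first passage of the ladder-height random walk above $x$; a renewal-theoretic bound then gives $\pr{}{\max_{k\le N} S_k \le x} \le C(1+x)N^{-1/2}$ uniformly in $x\ge 0$ and $N \ge 1$, with $C$ depending only on the law of $H$, equivalently only on $\sigma^2$ and the distribution. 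Alternatively, and more elementarily, one can avoid ladder variables entirely: write
\[
 \pr{}{M_N \le f_N} = \sum_{k=0}^{\lfloor f_N\rfloor}\pr{}{M_N \le f_N,\ S_N \in (k-1,k]} + \pr{}{M_N \le f_N, S_N \le -1} \cdot(\ldots),
\]
use the reversal $(S_N, S_N - S_{N-1}, \dots) \stackrel{d}{=} (S_N, S_1, \dots)$ to convert ``$M_N \le f_N$'' into a statement about the post-minimum part of a walk pinned near a fixed level, bound each summand by $\pr{}{\widetilde M_{N/2} \le 1}\cdot \pr{}{S_{N/2}\in(\text{interval of length }1)}$, the first factor being $O(N^{-1/2})$ by the $x=0$ case and the second being $O(N^{-1/2})$ by a local CLT / concentration bound (Kolmogorov–Rogozin), summing over $O(f_N)$ values of $k$ — but that loses a factor $N^{-1/2}$, so the honest route is the ladder/renewal one.

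The main obstacle is getting the bound \emph{uniform in the sequence $(f_n)$} and with the clean linear dependence $C f_N$ rather than $C f_N^2 N^{-1/2}$ or similar; this is exactly what the ladder-height renewal representation buys, because the probability that a renewal process (with finite mean increment) has a point in an interval $[m, m+x]$ is at most $C(1+x)$, and one then only needs that the number of ladder steps by time $N$ concentrates at scale $\sqrt N$. I would therefore organize the proof around: (i) the exact identity relating $\{M_N \le x\}$ to non-crossing of a renewal (ladder) walk above $x$; (ii) finiteness of the mean ladder height under $\E{}{S_1^2}<\infty$; (iii) a renewal bound $\pr{}{\text{renewal in } [n,n+x]} \le C(1+x)$; and (iv) combining with $\pr{}{(\text{number of ladder epochs before }N) \text{ is } \asymp \sqrt N}$ — i.e.\ the local behaviour of the ladder renewal process — to land $\pr{}{M_N \le f_N} \le C f_N N^{-1/2}$ for all $N$ large, the constant depending only on $\E{}{S_1^2}$ and $\E{}{(S_1^+)^2}$ (both finite here), not on $(f_n)$.
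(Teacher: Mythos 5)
Your final route (ladder heights plus renewal theory) is genuinely different from the paper's, and it stands or falls with the uniform estimate $\pr{}{M_N \le x} \le C(1+x)N^{-1/2}$ for all $x \ge 0$, $N \ge 1$. That estimate is a true, classical fluctuation-theory result (a Kozlov-type bound), and if you are allowed to quote it the lemma is immediate, with the independence of $C$ from $(f_n)$ coming from the uniformity in $x$. The gap is that your sketch (i)--(iv) does not actually produce it. Step (iv) is not a usable statement: the number of ascending ladder epochs by time $N$ does not concentrate at scale $\sqrt N$ (it is $0$ with probability $\asymp N^{-1/2}$), and on $\set{M_N\le x}$ it is forced to be of order $x$, so what is really needed is a joint local estimate for ``few ladder epochs, small cumulative height, long last waiting time''. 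The natural assemblies of (i)--(iii) fall short of the linear dependence on $x$: Markov's inequality on the number of renewals in $[0,x]$ combined with the ladder-epoch tail $\pr{}{T_1>n}\precsim n^{-1/2}$ gives only about $\sqrt{1+x}\,N^{-1/4}$; splitting at the time of the last ladder epoch and using the local bound $\pr{}{j \text{ is a ladder epoch},\ S_j\le x}\precsim (1+x)^2 j^{-3/2}$ gives, after optimisation, order $(1+x)^{4/3}N^{-1/2}$; an exponential-Chebyshev bound on $\nu_x=\min\set{k:\mathcal{H}_k>x}$ together with $\pr{}{T_m>N}\precsim mN^{-1/2}$ still leaves a spurious $\log N$. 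Reaching exactly $C(1+x)N^{-1/2}$ needs the finer machinery (Wiener--Hopf/Tauberian arguments or local limit theorems for walks conditioned to stay positive), i.e.\ essentially the theorem you would be citing; as a self-contained argument the key quantitative step is asserted rather than proved. (Your discarded first decomposition $\pr{}{M_N\le f_N}\le\pr{}{M_m\le f_N}\pr{}{M_{N-m}\le 2f_N}$ is also invalid as stated, since $\set{M_N\le f_N}$ does not force $S_m\ge -f_N$; but you rightly abandoned it.)

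The paper's proof is far more elementary and uses only $\pr{}{M_N\le 0}\sim cN^{-1/2}$ and the CLT. The events $\set{S_n\le 0,\ n\le N_0}$ and $\set{S_n\le 0,\ N_0<n\le N}$ are nonincreasing functions of the independent increments, hence positively correlated (association), so
\[
  \pr{}{M_N\le 0} \ \ge\ \pr{}{M_{N_0}\le 0}\,\pr{}{\sup_{N_0<n\le N}S_n\le 0}
  \ \ge\ \pr{}{M_{N_0}\le 0}\,\pr{}{S_{N_0}\le -f_N}\,\pr{}{M_{N}\le f_N},
\]
where the second inequality inserts $\set{S_{N_0}\le -f_N}$, uses independence of $(S_{N_0+n}-S_{N_0})_{n\ge1}$ from the past, and $\pr{}{M_{N-N_0}\le f_N}\ge\pr{}{M_N\le f_N}$. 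Rearranging and choosing $N_0=\lfloor f_N\rfloor^2$, the CLT gives $\pr{}{S_{N_0}\le -f_N}\to\pr{}{Z\le -1}>0$ while $\pr{}{M_{N_0}\le 0}\asymp 1/f_N$ — this is exactly where the factor $f_N$ comes from — and dividing yields $\pr{}{M_N\le f_N}\precsim f_N N^{-1/2}$ with constants not depending on $(f_n)$. In short, the paper trades the uniform-in-$x$ fluctuation estimate you invoke for a ``reverse'' use of the barrier-$0$ asymptotics via positive association, which is fully elementary but requires precisely the hypotheses $f_N\to\infty$ and $f_N=o(\sqrt N)$ of the lemma.
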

\begin{proof}
   Since independent random variables are associated (\cite{esary-proschan-walkup:1967}), we have for $1 \le N_0 < N$ that
\begin{align*}
   \pr{}{S_n \le 0, \forall n=1,\dots,N} \ge \pr{}{S_n \le 0, \forall n=1,\dots,N_0} \pr{}{ S_n \le 0, \forall n=N_0 + 1,\dots,N }.
\end{align*}
Now 
\begin{align*}
   &\pr{}{ \sup_{n=N_0 + 1,\dots,N} S_n \le 0 } \ge \pr{}{S_{N_0} \le -f_N, \sup_{n=N_0 + 1,\dots,N} S_n - S_{N_0} \le f_N} \\
&\quad = \pr{}{S_{N_0} \le -f_N} \pr{}{\sup_{n=1,\dots,N-N_0} S_n \le f_N} \ge  \pr{}{S_{N_0} \le -f_N} \pr{}{M_N \le f_N}.
\end{align*}
Hence, we get that
\[
   \pr{}{M_N \le f_N} \le \frac{\pr{}{M_N \le 0}}{\pr{}{M_{N_0} \le 0} \pr{}{S_{N_0} \le -f_N} }.
\]
With $N_0 = \lfloor f(N) \rfloor^2$, it follows from the CLT that $\pr{}{S_{N_0} \le -f_N} \to \pr{}{Z \le -1}$ where $Z$ is a centered Gaussian with variance $\E{}{Y_1^2}$. Moreover, since $\pr{}{M_N \le 0} \sim c N^{-1/2}$, we conclude that
\[
   \frac{\pr{}{M_N \le 0}}{\pr{}{M_{N_0} \le 0} \pr{}{S_{N_0} \le -f_N} } \sim \frac{N^{-1/2}}{N_0^{-1/2} \pr{}{Z \le -1}} \sim f_N N^{-1/2} / \pr{}{Z \le -1}.
\]
\end{proof}
\begin{cor}\label{cor:decay_like_RW}
   Assume that $\E{}{Y_1} = 0$. Let $a_1+a_2=1$ with $\abs{a_2} < 1$ and $x \geq 0$.
\begin{enumerate}
   \item If $\abs{Y_1} \leq M$ a.s., it holds that $p_N(x) \asymp N^{-1/2}$ as $N \to \infty$.
  \item If $\E{}{\exp(\abs{Y_1}^\alpha)} < \infty$ for some $\alpha > 0$, it holds that $p_N(x) = N^{-1/2+o(1)}$ as $N \to \infty$.
\end{enumerate}
\end{cor}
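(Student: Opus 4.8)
The plan is to identify $X$ with an integrated AR($1$)-process and then quote Theorem~\ref{thm:int_of_summable_seq}. First I would use Lemma~\ref{lem:diff_eq_sum_eq_1}: for $a_1 + a_2 = 1$ with $\abs{a_2} < 1$ we have $c_n = (1 - (-a_2)^{n+1})/(1+a_2)$, whence $c_n - c_{n-1} = (-a_2)^n$ for $n \geq 1$. Setting $\rho := -a_2 \in (-1,1)$ and letting $\tilde{X}$ be the AR($1$)-process $\tilde{X}_n = \rho \tilde{X}_{n-1} + Y_n$, i.e.\ $\tilde{X}_n = \sum_{k=1}^n \tilde{c}_{n-k} Y_k$ with $\tilde{c}_k = \rho^k$, a direct summation shows $X_n = \sum_{k=1}^n \tilde{X}_k$. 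The weights satisfy $\sum_{k \geq 1} k \abs{\tilde{c}_k} = \sum_{k \geq 1} k \abs{\rho}^k < \infty$ and $\sum_{k \geq 0} \tilde{c}_k = (1-\rho)^{-1} = (1+a_2)^{-1} \neq 0$, so together with the hypothesis $\E{}{Y_1} = 0$ all assumptions of Theorem~\ref{thm:int_of_summable_seq} are in force.

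For part~2, the second part of Theorem~\ref{thm:int_of_summable_seq} gives the upper bound $p_N(x) \precsim N^{-1/2}(\log N)^{1/\alpha}$ for every $x \geq 0$ as soon as $\E{}{\exp(\abs{Y_1}^\alpha)} < \infty$. For the matching lower bound I would note that $c_n = \sum_{j=0}^n \rho^j \geq 0$ for all $n$ (the partial geometric sums $(1-\rho^{n+1})/(1-\rho)$ are positive for $\abs{\rho}<1$), so the third part of Theorem~\ref{thm:int_of_summable_seq} applies and yields $p_N(x) \succsim N^{-1/2}(\log N)^{-1/\alpha + o(1)}$. Combining the two estimates gives $p_N(x) = N^{-1/2 + o(1)}$ for all $x \geq 0$.

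For part~1, the first part of Theorem~\ref{thm:int_of_summable_seq} already gives $p_N(x) \asymp N^{-1/2}$ for every $x$ above some threshold $x_0$. To remove this restriction I would invoke the barrier-independence observation from Section~\ref{sec:AR-2}: a short computation shows that $c_n = \sum_{j=0}^n \rho^j$ is bounded below by $\delta := \min\set{1, 1 - a_2} > 0$ for all $n \geq 0$, and since $Y_1$ is nondegenerate with $\E{}{Y_1} = 0$ there is $\epsilon > 0$ with $\pr{}{Y_1 \leq -\epsilon} > 0$; hence $p_N(x) \asymp p_N$ as $N \to \infty$ for each $x \geq 0$. In particular $p_N = p_N(0) \asymp p_N(x_0) \asymp N^{-1/2}$, and therefore $p_N(x) \asymp N^{-1/2}$ for every $x \geq 0$, completing part~1.

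Since this is a corollary of Theorem~\ref{thm:int_of_summable_seq}, I do not anticipate a genuine obstacle; the argument is essentially a matter of bookkeeping. The only points requiring a little care are the positivity $c_n \geq \delta > 0$ — needed both to apply the third part of the theorem and to run the barrier-independence reduction — and checking that the hypothesis $\E{}{\exp(\abs{Y_1}^\alpha)} < \infty$ of the corollary is precisely what parts~2 and~3 of the theorem demand.
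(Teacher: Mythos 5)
Your proposal is correct and follows essentially the same route as the paper: write $X$ as an integrated AR(1)-process $X_n=\sum_{k=1}^n Z_k$ with $Z_n=-a_2Z_{n-1}+Y_n$, apply parts 2 and 3 of Theorem~\ref{thm:int_of_summable_seq} (using positivity of the partial geometric sums) for part 2, and part 1 of that theorem together with the barrier-independence observation $p_N(x)\asymp p_N$ from the end of Section~\ref{sec:AR-2} for part 1. Your explicit verification of $c_n\geq\min\set{1,1-a_2}>0$ just spells out what the paper leaves implicit.
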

\begin{proof}
   If $X$ is AR(2) with coefficients $a_1,a_2$ as in the statement of the corollary, we have seen that $X_n = \sum_{k=1}^n Z_k$ where
$Z$ is AR(1) with $Z_n = - a_2 Z_{n-1} + Y_n$, i.e.\ $Z_n = \sum_{k=1}^n (-a_2)^{n-k} Y_k$. Since $\sum_{k=0}^n (-a_2)^k > 0$ for all $n$, it is not hard to see that part 2 and part 3 of Theorem~\ref{thm:int_of_summable_seq} imply part 2 of the corollary. Similarly, by part 1 of Theorem~\ref{thm:int_of_summable_seq} and the fact that $p_N(x) \asymp p_N$ (see the comment at the end of Section~\ref{sec:AR-2}), we obtain part 1 of the corollary.  
\end{proof}
In analogy to the results for random walks, it is very likely that the assertion of Corollary~\ref{cor:decay_like_RW} remains true under the much weaker integrability assumption $\E{}{Y_1^2} \in (0,\infty)$. Depending on the sign of $a_1$, we can improve the preceding corollary by proving an upper or lower bound of order $N^{-1/2}$:
\begin{prop}\label{prop:reduction_to_RW}
   Let $a_1 + a_2 = 1$ with $\abs{a_2} < 1$. Assume that $\E{}{Y_1} = 0$, $\E{}{Y_1^2} \in (0,\infty)$. 
\begin{enumerate}
   \item If $a_2 > 0$, we have that $p_N(x) \precsim N^{-1/2}$ for all $x \geq 0$.
  \item If $a_2 < 0$, we have that $p_N(x) \succsim N^{-1/2}$ for all $x \geq 0$.
\end{enumerate}
\end{prop}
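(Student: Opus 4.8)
The plan is to reduce both statements to a single algebraic identity linking $X$ to the underlying random walk $S_n := \sum_{k=1}^n Y_k$. Put $\rho := -a_2$, so that $\abs{\rho} < 1$ and, since $a_1 + a_2 = 1$, $a_1 = 1 + \rho$. The claim is that
\[
   S_n = X_n - \rho X_{n-1}, \qquad n \ge 1,
\]
with the convention $X_0 = 0$; equivalently, solving this first-order recursion for $X$,
\[
   X_n = \sum_{j=0}^{n-1} \rho^j S_{n-j}, \qquad n \ge 1.
\]
The first form is verified by telescoping: the right-hand side equals $Y_1$ when $n = 1$, and for $n \ge 2$ its increment is $X_n - (1+\rho) X_{n-1} + \rho X_{n-2} = X_n - a_1 X_{n-1} - a_2 X_{n-2} = Y_n$ by the defining recursion of $X$, while the increment of $S$ is also $Y_n$. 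Apart from this identity, the only external input is the classical fact that a centered random walk with $\E{}{S_1^2} \in (0,\infty)$ satisfies $\pr{}{\max_{n=1,\dots,N} S_n \le b} \asymp N^{-1/2}$ for every fixed $b \ge 0$: the lower bound is the classical persistence estimate for $b=0$ (see e.g.\ \cite{feller:1971-vol2}), and the upper bound follows from the same association argument as in the proof of Lemma~\ref{lem:RW_below_f_N}, run with a constant in place of $f_N$.

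For part~1, suppose $a_2 > 0$, i.e.\ $\rho \in (-1,0)$. On the survival event $\bigcap_{n=1}^{N} \set{X_n \le x}$ one has $X_{n-1} \le x$ for every $n \le N$ (using $X_0 = 0 \le x$), and since $-\rho = a_2 > 0$ the identity gives $S_n = X_n + a_2 X_{n-1} \le (1 + a_2) x =: b \ge 0$ for all $n \le N$. Hence $p_N(x) \le \pr{}{\max_{n \le N} S_n \le b} \precsim N^{-1/2}$.

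For part~2, suppose $a_2 < 0$, i.e.\ $\rho \in (0,1)$, and use the second form of the identity, $X_n = \sum_{j=0}^{n-1} \rho^j S_{n-j}$, in which all weights $\rho^j$ are positive and all indices $n-j$ lie in $\set{1,\dots,n}$. On the event $\set{\max_{n \le N} S_n \le 0}$ every summand is therefore $\le 0$, so $X_n \le 0 \le x$ for all $n \le N$; that is, $\set{\max_{n \le N} S_n \le 0} \subseteq \bigcap_{n=1}^N \set{X_n \le x}$. Consequently $p_N(x) \ge \pr{}{\max_{n \le N} S_n \le 0} \succsim N^{-1/2}$.

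The argument is short, and the real content is the observation that the constraint $a_1 + a_2 = 1$ is exactly what makes $X - \rho X(\cdot - 1)$ collapse to a plain random walk; once that is noticed there is no serious obstacle, and in particular no integrability beyond $\E{}{Y_1^2}\in(0,\infty)$ is needed. The only point requiring a little care is that for a general barrier $x > 0$ the comparison in part~1 produces the strictly positive barrier $b = (1 + a_2) x$, so one needs $\pr{}{\max_{n \le N} S_n \le b} \asymp N^{-1/2}$ for $b > 0$ and not merely for $b = 0$; this is standard and is also essentially contained in Lemma~\ref{lem:RW_below_f_N}.
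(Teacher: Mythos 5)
Your proof is correct and follows essentially the same route as the paper: you define the same auxiliary walk $S_n = X_n + a_2 X_{n-1}$, check it is a centered random walk, and compare survival events in each direction (the paper phrases part~2 via the recursion $X_n = -a_2 X_{n-1} + S_n$ and induction, which is just your explicit sum $X_n = \sum_{j=0}^{n-1}(-a_2)^j S_{n-j}$ in disguise). Your barrier $(1+a_2)x$ in part~1 is in fact the accurate constant (the paper's $a_2 x$ is a harmless slip), and invoking the standard estimate $\pr{}{\max_{n\le N} S_n \le b} \asymp N^{-1/2}$ for a fixed $b \ge 0$ is exactly what the paper does as well.
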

\begin{proof}
   For $n \geq 1$, set $S_n := X_n + a_2 X_{n-1}$ and note that
\[
   S_n = a_1 X_{n-1} + a_2 X_{n-2} + Y_n +  a_2 X_{n-1} = X_{n-1} + a_2 X_{n-2} + Y_n = S_{n-1} + Y_n,
\]
i.e.\ $(S_n)_{n \geq 1}$ defines a centered random walk. Moreover, since $a_1 + a_2 = 1$, we have, for $n \geq 1$, that
\[
   X_n = (a_1 - 1) X_{n-1} + X_{n-1} + a_2 X_{n-2} + Y_n = (a_1 - 1) X_{n-1} + S_{n-1} + Y_n = - a_2 X_{n-1} + S_n.
\]
In particular, if $a_2 > 0$, it holds that $X_n \leq x$ for $n=1,\dots,N$ implies that $S_n \leq a_2 x$ for $n=1,\dots,N$ and therefore,
\[
   p_N(x) \leq \pr{}{\sup_{n=1,\dots,N} S_n \leq a_2 x} \precsim N^{-1/2}.
\]
Similarly, if $a_2 < 0$, $S_n \leq 0$ for $n=1,\dots,N$ implies that $X_n \leq 0$ for $n=1,\dots,N$, which yields the lower bound.
\end{proof}
Let us finally remark that Theorem~\ref{thm:int_of_summable_seq} is also applicable to integrated AR($p$)-processes such that the roots $s_1,\dots,s_p$ of the corresponding characteristic polynomial lie inside the unit disc. Let us just state the simplest case of bounded innovations $Y_n$. Set 
\[
   \Delta_p := \set{ (a_1, \dots, a_p) : \max_{k=1,\dots,p} \abs{s_k} < 1 }
\]
where $s_1,\dots,s_p$ are the roots of the characteristic polynomial, see p.\ \pageref{def:char_pol}.
\begin{cor}\label{cor:AR_p_int_summable}
   Let $X$ be the AR($p$)-process corresponding to $(a_1,\dots,a_p) \in \Delta_p$. Assume that $\abs{Y_1} \leq M < \infty$ a.s. Then there is $x_0 \geq 0$ such that for all $x \geq x_0$, we have that
\[
   \pr{}{\sup_{n=1,\dots,N} \sum_{k=1}^n X_k \leq x} \asymp N^{-1/2}.
\]
\end{cor}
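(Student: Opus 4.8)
The strategy is to recognise $S_n := \sum_{k=1}^{n} X_k$ as an integrated process of exactly the type treated in Theorem~\ref{thm:int_of_summable_seq}, and merely to verify its two hypotheses for the AR($p$)-process $X$. Write $X_n = \sum_{k=1}^{n}\tilde c_{n-k}Y_k$, where $(\tilde c_n)_{n\ge 0}$ solves the associated linear difference equation ($\tilde c_0=1$, $\tilde c_n=\sum_{k=1}^{p}a_k\tilde c_{n-k}$). Then $X$ plays the role of the process $\tilde X$ in Theorem~\ref{thm:int_of_summable_seq} and $S_n=\sum_{k=1}^n X_k$ plays the role of its $X$, so the two conditions to check are $\sum_{k\ge 1}k\,\abs{\tilde c_k}<\infty$ and $\sum_{k\ge 0}\tilde c_k\neq 0$. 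Granting these and the standing assumption $\E{}{Y_1}=0$ of this section, part~1 of Theorem~\ref{thm:int_of_summable_seq} — whose bounded-innovations hypothesis is precisely $\abs{Y_1}\le M$ — gives the claim, with $x_0:=M\sum_{j\ge 1}j\abs{\tilde c_j}$.

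For the summability, recall (see Section~3.3 of \cite{brockwell-davis:1987}, or equate coefficients directly) that $\sum_{n\ge 0}\tilde c_n z^n = 1/p(z)$ with $p(z)=1-a_1z-\dots-a_pz^p$, and note that $p(z)=z^p f_p(1/z)$, where $f_p$ is the characteristic polynomial on p.~\pageref{def:char_pol}. Hence the zeros of $p$ are exactly the reciprocals $1/s_1,\dots,1/s_p$ of the nonzero roots of $f_p$ (and $p(0)=1\ne 0$); since $(a_1,\dots,a_p)\in\Delta_p$ means $\max_i\abs{s_i}<1$, all zeros of $p$ lie in $\set{\abs{z}>1}$, so $1/p$ is holomorphic on a disc of radius $R>1$ and therefore $\abs{\tilde c_n}\le C r^n$ for some $r\in(0,1)$ and all $n$. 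Equivalently, and more elementarily, the general solution of the difference equation is a linear combination of terms $n^j s_i^{\,n}$ with $\abs{s_i}<1$, each of which decays exponentially even when roots are repeated (cf.\ \cite{elaydi:1999}). In particular $\sum_{k\ge 1}k\abs{\tilde c_k}<\infty$.

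For the nondegeneracy, the same generating-function identity gives $\sum_{k\ge 0}\tilde c_k = 1/p(1) = 1/(1-a_1-\dots-a_p)$, and $1-a_1-\dots-a_p=f_p(1)$ cannot vanish: if it did, $x=1$ would be a root of $f_p$, contradicting $\max_i\abs{s_i}<1$. Hence $\sum_{k\ge 0}\tilde c_k = 1/f_p(1)\neq 0$, which is the remaining hypothesis of Theorem~\ref{thm:int_of_summable_seq}, and the corollary follows at once.

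There is essentially no hard step: the entire content sits in Theorem~\ref{thm:int_of_summable_seq}, and the present argument only has to translate the spectral condition $(a_1,\dots,a_p)\in\Delta_p$ into the two analytic facts about $(\tilde c_n)$ — uniform exponential decay and nonzero sum — both of which are immediate from classical difference-equation theory. The only point requiring a moment's care is guaranteeing the exponential decay \emph{with} polynomial factors killed off in the presence of multiple roots, which is automatic. Finally, if one wanted the conclusion under the weaker assumption $\E{}{\exp(\abs{Y_1}^\alpha)}<\infty$ in place of boundedness, the same reduction together with parts~2 and~3 of Theorem~\ref{thm:int_of_summable_seq} would give $p_N(x)=N^{-1/2+o(1)}$ for every $x\ge 0$, provided one additionally verifies $\sum_{k=0}^{n}\tilde c_k\ge 0$ for all $n$; this last property need not hold for general $(a_1,\dots,a_p)\in\Delta_p$ and would have to be examined case by case.
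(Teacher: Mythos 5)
Your proposal is correct and follows exactly the route the paper intends: the corollary is stated as an immediate application of Theorem~\ref{thm:int_of_summable_seq}, and you supply precisely the two verifications (exponential decay of $(\tilde c_n)$ from $\max_i\abs{s_i}<1$, hence $\sum_k k\abs{\tilde c_k}<\infty$, and $\sum_k \tilde c_k = 1/(1-a_1-\dots-a_p)\neq 0$) that make part~1 of that theorem applicable, together with the implicit standing assumption $\E{}{Y_1}=0$. No discrepancies with the paper's (unwritten but clearly intended) argument.
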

Since we know the region $\Delta_2$ explicitly (cf.\ Figure~\ref{fig:c_n_to_zero}), we obtain the following result for AR(3)-processes:
\begin{cor}\label{cor:AR_3_RW}
Let $X$ be AR(3) with $a_1,a_2,a_3$ satisfying
\[
   a_1 + a_2 + a_3 = 1, \quad a_2 < \min \set{1, 3-2a_1}, \quad a_2 > -a_1.
\]
Assume that $\abs{Y_1} \leq M$ a.s.\ for some $M < \infty$. Then there is $x_0 \geq 0$ such that $p_N(x) \asymp N^{-1/2}$ for all $x \geq x_0$.
\end{cor}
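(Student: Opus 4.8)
The plan is to recognize $X$ as an integrated AR($2$)-process and then invoke Corollary~\ref{cor:AR_p_int_summable} with $p=2$, for which the parameter region $\Delta_2$ is known explicitly (Remark~\ref{rem:c_n_to_zero} and Figure~\ref{fig:c_n_to_zero}).

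First I would run the coefficient map \eqref{eq:coeff_of_int} backwards. If $\tilde X$ is AR($2$) with coefficients $(b_1,b_2)$, then $X_n := \sum_{k=1}^n \tilde X_k$ is AR($3$) with coefficients $T_2(b_1,b_2) = (b_1+1,\,b_2-b_1,\,-b_2)$. Equating this with $(a_1,a_2,a_3)$ forces $b_1 = a_1-1$ and $b_2 = -a_3$, and the middle coordinate imposes the compatibility relation $a_2 = b_2 - b_1 = -a_3-(a_1-1)$, i.e.\ $a_1+a_2+a_3 = 1$. Thus the first hypothesis of the corollary is exactly the statement that $X$ is the partial-sum process of the AR($2$)-process $\tilde X$ with coefficients $(b_1,b_2) = (a_1-1,\,-a_3)$. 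It then remains to check that $(b_1,b_2) \in \Delta_2$. By Remark~\ref{rem:c_n_to_zero}, $(b_1,b_2)\in\Delta_2$ is equivalent to $b_1+b_2<1$, $b_2<1+b_1$ and $b_2>-1$. Substituting $b_1 = a_1-1$ and $b_2 = a_1+a_2-1$ (using $a_3 = 1-a_1-a_2$), these three inequalities become, respectively, $a_2 < 3-2a_1$, $a_2 < 1$ and $a_2 > -a_1$ --- precisely the constraints $a_2 < \min\set{1,\,3-2a_1}$ and $a_2 > -a_1$ in the statement.

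Finally, since $(b_1,b_2)\in\Delta_2$ the roots of $\lambda^2-b_1\lambda-b_2$ lie strictly inside the unit disc, so the coefficient sequence $(\tilde c_n)$ of $\tilde X$ decays geometrically (possibly with a polynomial prefactor in the double-root case), whence $\sum_{k\ge1} k\abs{\tilde c_k}<\infty$; moreover $b_1+b_2<1$ gives $\sum_{k\ge0}\tilde c_k = 1/(1-b_1-b_2)\neq 0$. Hence all hypotheses of Corollary~\ref{cor:AR_p_int_summable} (equivalently, part~1 of Theorem~\ref{thm:int_of_summable_seq}) are in force with the bounded innovations $\abs{Y_1}\le M$, and it yields $x_0\ge0$ with $\pr{}{\sup_{n=1,\dots,N}\sum_{k=1}^n\tilde X_k\le x}\asymp N^{-1/2}$ for all $x\ge x_0$. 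Since $\sum_{k=1}^n\tilde X_k = X_n$, this is $p_N(x)\asymp N^{-1/2}$, as claimed. There is no genuine difficulty here: the argument is a linear change of variables plus the explicit description of $\Delta_2$; the only thing requiring care is the bookkeeping of the three linear inequalities under the substitution $a_3 = 1-a_1-a_2$, and noting that strict membership in $\Delta_2$ (rather than the mere convergence $c_n\to0$) is what supplies the geometric decay and nonvanishing sum needed to apply the integrated-process results.
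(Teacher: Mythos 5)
Your proposal is correct and follows essentially the same route as the paper: invert the integration map $T_2$ to identify $X$ as the integrated AR($2$)-process with coefficients $(a_1-1,\,a_1+a_2-1)$, check via the explicit description of $\Delta_2$ that the stated inequalities are exactly membership in $\Delta_2$, and conclude by Corollary~\ref{cor:AR_p_int_summable}. Your extra verification of the hypotheses of Theorem~\ref{thm:int_of_summable_seq} (geometric decay of $(\tilde c_n)$ and $\sum_k \tilde c_k \neq 0$) is a harmless elaboration of what the paper leaves implicit in that corollary.
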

\begin{proof}
Let us show that $X$ is an integrated AR(2)-process $\tilde{X}$ with parameters in $\Delta_2$. Since $a_1 + a_2 + a_3 = 1$, we have that $T_2(a_1 - 1, a_1 + a_2 - 1) = (a_1,a_2,a_3)$ where $T_2$ was defined in \eqref{eq:coeff_of_int}. Hence, by Corollary~\ref{cor:AR_p_int_summable}, we only need to show that 
\[
   (a_1 - 1, a_1 + a_2 - 1) \in \Delta_2 = \set{(\tilde{a}_1,\tilde{a}_2) : \tilde{a}_1 + \tilde{a}_2 < 1, \tilde{a}_2 < 1 + \tilde{a}_2, \tilde{a}_2 > -1 },
\]
(see Remark~\ref{rem:c_n_to_zero}) whenever $(a_1,a_2,a_3)$ satisfy the constraints stated in the corollary. Let $\tilde{a}_1 = a_1 - 1$ and $\tilde{a}_2 = a_1 + a_2 - 1$. Now $a_2 < 3 - 2a_1$ amounts to $\tilde{a}_1 + \tilde{a}_2 = 2a_1+a_2 - 2 < 1$. Next, $\tilde{a}_2 < 1 + \tilde{a}_1$ is equivalent to $a_2 < 1$, whereas $\tilde{a}_2 > -1$ translates into $a_1 > - a_2$.
\end{proof}
\subsection{The case $a_1 = 0$}
We still have to consider the case $X_n = X_{n-2} + Y_n$ which is a special case of the equation $X_n = \rho X_{n-2} + Y_n$. The solution of the latter equation is given by 
\[
X_n = 
	\begin{cases}
		\sum_{j=1}^k \rho^{k-j} Y_{2j - 1}, &\quad n=2k-1, k \in \mathbb{N},\\
		\sum_{j=1}^k \rho^{k-j} Y_{2j}, &\quad n=2k, k \in \mathbb{N}.\\
	\end{cases}
\]
In particular, $(X_{2n})$ and $(X_{2n-1})$ define two independent sequences with the same law as $(Z_n)_{n \geq 1}$ given by $Z_n = \rho Z_{n-1} + Y_n$. Hence, 
\begin{equation}\label{eq:AR_square}
\begin{array}{rcl}
 \pr{}{\sup_{n=1,\dots,2N} X_n \leq x}   & = & \left(\pr{}{\sup_{n=1,\dots,N} Z_n \leq x}\right)^2, \\
 \pr{}{\sup_{n=1,\dots,2N-1} X_n \leq x} & = & \pr{}{\sup_{n=1,\dots,N} Z_n \leq x}\pr{}{\sup_{n=1,\dots,N-1} Z_n \leq x} .
\end{array}
\end{equation}
In particular, the behaviour of the survival probability can be determined by the survival probabilities of AR($1$)-processes. If $\rho = 1$, $X$ defines two indpendent random walk, so we immediately obtain the following lemma:
\begin{lemma}\label{lem:RW_squared}
   Let $\E{}{Y_1} = 0$, $\E{}{Y_1^2} \in (0,\infty)$. $X_{n} = X_{n-2} + Y_{n}$. Then for any $x \geq 0$, there is a constant $c(x)$ such that
\[
   \pr{}{\sup_{n=1,\dots,N} X_n \leq x} \sim c(x) N^{-1}, \quad N \to \infty.
\]
\end{lemma}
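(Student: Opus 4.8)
The plan is to use the decomposition already recorded in \eqref{eq:AR_square}. Since $a_1 = 0$, $a_2 = 1$, the recursion $X_n = X_{n-2} + Y_n$ splits $X$ into the two subsequences $(X_{2n})_{n \ge 1}$ and $(X_{2n-1})_{n \ge 1}$, which are independent and each distributed as an ordinary centered random walk $S_n = \sum_{k=1}^n Y_k'$ (this is the case $\rho = 1$ of the explicit solution on the preceding page). Consequently the event $\{\sup_{n \le N} X_n \le x\}$ factorizes into a product of two random-walk survival events, each run for roughly $N/2$ steps, exactly as in \eqref{eq:AR_square}.

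First I would invoke the classical one-sided exit asymptotics for a centered random walk with finite, positive variance: $\pr{}{\sup_{n=1,\dots,M} S_n \le x} \sim \gamma(x)\, M^{-1/2}$ as $M \to \infty$, with $\gamma(x) \in (0,\infty)$ for every $x \ge 0$ (see \cite{feller:1971-vol2}; strict positivity holds because $Y_1$ is centered and nondegenerate, hence $\pr{}{Y_1 \le 0} > 0$). Note that $\gamma(x)$ is the same for both subsequences since they have the same law.

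It then remains to combine the two factors. For even $N = 2M$ the first line of \eqref{eq:AR_square} gives
\[
   p_N(x) = \pr{}{\sup_{n=1,\dots,M} S_n \le x}^2 \sim \gamma(x)^2\, M^{-1} = 2\gamma(x)^2\, N^{-1},
\]
while for odd $N = 2M-1$ the second line gives
\[
   p_N(x) = \pr{}{\sup_{n=1,\dots,M} S_n \le x}\,\pr{}{\sup_{n=1,\dots,M-1} S_n \le x} \sim \gamma(x)^2 \bigl( M(M-1) \bigr)^{-1/2} \sim 2\gamma(x)^2\, N^{-1},
\]
using $M \sim N/2$ in both parities. Hence the lemma holds with $c(x) := 2\gamma(x)^2 > 0$.

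Since every nontrivial ingredient is already available, there is no real obstacle here; the only points needing a little care are verifying that the even and odd cases yield the same limiting constant (they do, because $\lfloor N/2\rfloor / N \to 1/2$ regardless of parity) and recording that the random-walk constant $\gamma(x)$ is strictly positive rather than merely nonnegative.
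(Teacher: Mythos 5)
Your proposal is correct and follows the same route as the paper: split via \eqref{eq:AR_square} into two independent copies of a centered random walk, invoke the classical $\sim \gamma(x)M^{-1/2}$ exit asymptotics, and multiply. The only difference is that you spell out the even/odd parity bookkeeping and identify the constant as $c(x)=2\gamma(x)^2$, which the paper leaves implicit.
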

\begin{proof}
   By the preceding discussion, $(X_{2n})$ and $(X_{2n-1})$ define two independent centered random walks with finite variance that have the same law. It is then well known that $\pr{}{\sup_{n=1,\dots,N} \sum_{k=1}^n Y_k \leq x} \sim d(x) N^{-1/2}$. The assertion follows in view of \eqref{eq:AR_square}.
\end{proof}
\begin{remark}
By the same reasoning, if $X_n = X_{n-p} + Y_n$ ($p \geq 1$), we have that $p_N(x) \sim c(x) N^{-p/2}$ for any $x \geq 0$ if $\E{}{Y_1} = 0$ and $\E{}{Y_1^2} \in (0,\infty)$. 
\end{remark}
\section{A positive limit}\label{sec:pos_lim}
We now turn to the case that the survival probability converges to a positive limit, i.e.\ $p_N(x) \to p_\infty(x) > 0$ as $N \to \infty$, implying that the process $(X_n)_{n \geq 1}$ stays below $x$ at all times with positive probability. If $X_n = \sum_{k=1}^n c_{n-k} Y_k$, one would expect that this happens if $0 < c_n \to \infty$ and $c_n - c_{n-1} \to \infty$. Indeed, if $c_n$ is very large compared to $c_k$ for $k \le n-1$, then $Y_1 \leq -\delta$ for some $\delta > 0$ implies that $X_n \leq - \delta c_n + \sum_{k=2}^n c_{n-k} Y_k$, and one expects that the expression on the r.h.s.\ stays below a fixed barrier with high probability. In fact, we can transform this idea directly into a proof.
\begin{prop}\label{prop:AR_1_coeff_greater_one}
   Let $(\alpha_n)_{n \geq 0}$ denote a sequence of positive numbers. Let $\rho > 1$ and assume that $\pr{}{Y_1 < 0} > 0$ and $\pr{}{Y_1 \geq x} \precsim (\log x)^{-\alpha}$ as $x \to \infty$ for some $\alpha > 1$. Let $X_n := \sum_{k=1}^n \rho^{n-k} \alpha_{n-k} Y_k$. 
\begin{enumerate}
   \item If $(\alpha_n)_{n \geq 0}$ is nondecreasing, there is a constant $c > 0$ such that 
\[
   \pr{}{\bigcap_{n=1}^\infty \set{X_n \leq - c \alpha_{n-1} \rho^{n-1}} } > 0.
\]
\item If $0 < l \leq \alpha_n \leq u < \infty$ for all $n \geq 0$, there is a constant $c > 0$ such that
\[
   \pr{}{\bigcap_{n=1}^\infty \set{X_n \leq - c \rho^{n-1}} } > 0.
\]
\end{enumerate}
\end{prop}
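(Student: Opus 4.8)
The plan is to normalise $X_n$ by the coefficient $\alpha_{n-1}\rho^{n-1}$ of $Y_1$ and to observe that the $Y_1$-term dominates, while the contribution of the remaining innovations is bounded, uniformly in $n$, by an almost surely finite random variable built from $Y_2,Y_3,\dots$ alone. Put $T := \sum_{k=2}^\infty \rho^{-(k-1)}(Y_k)^+$. The first step is to show that $T < \infty$ almost surely. For this I would split, for each $k$,
\[
  (Y_k)^+ = (Y_k)^+\indic{(Y_k)^+ \le \rho^{(k-1)/2}} + (Y_k)^+\indic{(Y_k)^+ > \rho^{(k-1)/2}} ;
\]
on the first event $\rho^{-(k-1)}(Y_k)^+ \le \rho^{-(k-1)/2}$, and $\sum_{k\ge 2}\rho^{-(k-1)/2} < \infty$ because $\rho > 1$; for the second, $\sum_k \pr{}{Y_1 \ge \rho^{(k-1)/2}} < \infty$ by the tail hypothesis together with $\alpha > 1$, so Borel--Cantelli leaves only finitely many nonzero summands. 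This is the step requiring the most care, and the only place where the slowly decaying tail with $\alpha > 1$ is genuinely needed.

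The second ingredient is that $\pr{}{T \le M} > 0$ for every $M > 0$. Indeed, $\sum_{k > K}\rho^{-(k-1)}(Y_k)^+ \to 0$ almost surely as $K \to \infty$, being the tail of an almost surely convergent series, so for $K$ large this tail is $\le M$ with probability at least $1/2$; it is independent of the head $\sum_{k=2}^K \rho^{-(k-1)}(Y_k)^+$, which vanishes on $\set{Y_2 \le 0,\dots,Y_K \le 0}$, an event of probability $\pr{}{Y_1 \le 0}^{K-1} > 0$ (if $\pr{}{Y_1 \le 0} = 1$, then $T = 0$ and the claim is trivial).

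Now fix $\delta > 0$ with $\pr{}{Y_1 \le -\delta} > 0$, which is possible since $\pr{}{Y_1 < 0} > 0$. For part~1, monotonicity of $(\alpha_n)$ gives $\alpha_{n-k} \le \alpha_{n-1}$ for $2 \le k \le n$, so on $\set{Y_1 \le -\delta}$, after discarding negative innovations,
\[
  \frac{X_n}{\alpha_{n-1}\rho^{n-1}} = Y_1 + \sum_{k=2}^n \frac{\alpha_{n-k}}{\alpha_{n-1}}\,\rho^{-(k-1)} Y_k \le -\delta + \sum_{k=2}^\infty \rho^{-(k-1)}(Y_k)^+ = -\delta + T
\]
for every $n \ge 1$ (for $n = 1$ this reads $X_1/\alpha_0 = Y_1 \le -\delta$). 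Intersecting $\set{Y_1 \le -\delta}$ with the independent event $\set{T \le \delta/2}$, which has positive probability by the previous paragraph, forces $X_n \le -(\delta/2)\alpha_{n-1}\rho^{n-1}$ for all $n$ simultaneously, so $c := \delta/2$ works. Part~2 is proved in the same way, dividing by $\rho^{n-1}$ only: on $\set{Y_1 \le -\delta}$ the $k=1$ term of $X_n/\rho^{n-1}$ equals $\alpha_{n-1}Y_1 \le -l\delta$ since $\alpha_{n-1} \ge l$, while the remaining terms sum to at most $u\sum_{k\ge 2}\rho^{-(k-1)}(Y_k)^+ = uT$; on the independent event $\set{T \le l\delta/(2u)}$ this gives $X_n \le -(l\delta/2)\rho^{n-1}$ for all $n$, so $c := l\delta/2$ works. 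Beyond establishing $T < \infty$ almost surely, no real obstacle is anticipated.
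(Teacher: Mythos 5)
Your proof is correct. The underlying strategy is the same as the paper's (force $Y_1 \leq -\delta$ and show that the discounted contribution of $Y_2,Y_3,\dots$ stays uniformly small with positive probability), but the implementation of the second step differs. The paper constructs an explicit event $A_N = \set{Y_1 \leq -\delta} \cap \bigcap_{n=2}^N \set{Y_n \leq \beta \rho^{n-1} n^{-2}}$ with deterministic per-coordinate thresholds, checks the inclusion into the target event, and obtains positivity from the convergence of the infinite product $\prod_n \left(1 - \pr{}{Y_1 > \beta \rho^{n-1} n^{-2}}\right)$, which needs $\sum_n \pr{}{Y_1 > \beta\rho^{n-1}n^{-2}} \precsim \sum_n n^{-\alpha} < \infty$. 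You instead introduce the single random variable $T = \sum_{k \geq 2} \rho^{-(k-1)} (Y_k)^+$, prove $T < \infty$ a.s.\ by truncation at level $\rho^{(k-1)/2}$ plus Borel--Cantelli (where the hypothesis $\alpha > 1$ enters through exactly the same summability $\pr{}{Y_1 \geq \rho^{(k-1)/2}} \precsim k^{-\alpha}$), and get $\pr{}{T \leq M} > 0$ from the independence of the head (killed on $\set{Y_2 \leq 0, \dots, Y_K \leq 0}$, of positive probability since $\pr{}{Y_1 < 0} > 0$) and the a.s.\ vanishing tail; intersecting $\set{Y_1 \leq -\delta}$ with the independent event $\set{T \leq \delta/2}$ (resp.\ $\set{T \leq l\delta/(2u)}$ in part~2) then yields the claim with $c = \delta/2$ (resp.\ $c = l\delta/2$). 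Your route is slightly more modular --- the a.s.\ convergence of the discounted series and the positivity of its small-value probability are separated, and the normalization $X_n / (\alpha_{n-1}\rho^{n-1}) \leq Y_1 + T$ makes the uniformity in $n$ transparent --- while the paper's argument is more elementary in that it avoids any appeal to Borel--Cantelli or almost-sure convergence and reads off positivity directly from an infinite product. Both proofs use the assumptions in the same places, and your constants are equally explicit.
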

\begin{proof}
We first prove part 1. Let $\delta > 0$ such that $\pr{}{Y_1 \leq -\delta} > 0$. Let $\beta > 0$ denote a sequence of positive numbers with $\beta \sum_{k=1}^\infty k^{-2} \leq \delta / 2$. Then
\[
   A_N := \set{Y_1 \leq -\delta} \cap \bigcap_{n=2}^N \set{Y_n \leq \rho^{n-1} \beta n^{-2}} \subseteq \bigcap_{n=1}^N \set{X_n \leq - \delta \alpha_{n-1} \rho^{n-1} / 2}
\]
Indeed, since $(\alpha_n)$ is nondecreasing, the event $A_N$ implies that $X_1 = \alpha_0 Y_1 \leq -\alpha_0 \delta$ and for all $n = 2,\dots,N$ that 
\begin{align*}
   X_n & = \rho^{n-1} \alpha_{n-1} Y_1 + \sum_{k=2}^n \rho^{n-k} \alpha_{n-k} Y_k \leq - \delta \alpha_{n-1} \rho^{n-1} +  \rho^{n-1} \sum_{k=2}^n \alpha_{n-k} \beta k^{-2} \\
&\leq - \delta \alpha_{n-1} \rho^{n-1} + \rho^{n-1} \alpha_{n-1} \beta \sum_{k=1}^\infty  k^{-2} =  \alpha_{n-1} \rho^{n-1} \left( \beta \sum_{k=1}^\infty k^{-2} - \delta \right) \leq - \delta  \alpha_{n-1} \rho^{n-1} / 2.
\end{align*}
Finally, in view of the assumption on the tail behaviour of $Y_1$, it is not hard to show that
\begin{align*}
   \lim_{N \to \infty} \pr{}{A_N} = \pr{}{Y_1 \leq -\delta} \lim_{N \to \infty} \prod_{n=2}^N \left(1 - \pr{}{Y_1 > \beta \rho^{n-1} n^{-2}} \right) > 0. 
\end{align*}
The proof of part 2 is very similar. Let $A_N$ be defined as above. Then, using the bounds on $(\alpha_n)$, we get for $n = 2,\dots,N$ that 
\begin{align*}
   X_n &\leq - \delta \alpha_{n-1} \rho^{n-1} +  \rho^{n-1} \sum_{k=2}^n \alpha_{n-k} \beta k^{-2} \leq - \delta l \rho^{n-1} + \rho^{n-1} u \beta \sum_{k=1}^\infty  k^{-2} \\
&=  \rho^{n-1} \left( \beta u \sum_{k=1}^\infty k^{-2} - \delta l \right).
\end{align*}
For $\beta > 0$ sufficiently small, this implies that $X_n \leq - (\delta \, l /2) \, \rho^{n-1}$ for all $n=2,\dots,N$.
\end{proof}
We can now prove Theorem~\ref{thm:pos_lim_summary} showing that the survival probability converges to a positive constant if $X$ is AR(2) with $(a_1,a_2) \in C$ (cf.\ Figure \ref{fig:R_2_decomp}) under mild conditions.  
\begin{proof}(of Theorem~\ref{thm:pos_lim_summary})
Let $(a_1,a_2) \in C$. Assume first that $a_1 > 0$ and $a_2 \in \mathbb{R}$ such that $a_1^2 + 4a_2 > 0$. Moreover, assume that either $a_1 \geq 2$ or $a_1+a_2 > 1$ if $a_1 < 2$. Recall from \eqref{eq:sol_diff_eq} that $c_n = s_1^{n+1}/h  - s_2^{n+1}/h$ where $h > 0$ since $a_1^2 + 4a_2 > 0$. Note that $s_1 = (a_1+h)/2 > 1$ if and only if either $a_1 \geq 2$ or if $a_1+a_2 > 1$ in case $a_1 < 2$. Moreover $\abs{s_2} < s_1$ if and only if $a_1> 0$ and $h > 0$. Hence, in view of our assumptions, we have that $c_n = s_1^n s_1/h \, (1 - (s_2/s_1)^{n+1}) =: s_1^n \alpha_n \geq 0$ for all $n$. Note that $\alpha_n \to s_1/h > 0$. Hence, the assertion follows by part 2 of Proposition~\ref{prop:AR_1_coeff_greater_one}.\\
If $a_1^2+4a_2 = 0$ and $a_1 > 2$, $c_n = (a_1/2)^n (n+1)$ by \eqref{eq:sol_diff_eq}. Hence, the result follows from part 1 of Proposition \ref{prop:AR_1_coeff_greater_one} with $\rho = a_1 / 2 > 1$ and $\alpha_n = n+1$. \\ 
Finally, if $a_1 = 0$ and $a_2 > 1$, the claim follows in view of \eqref{eq:AR_square} and Proposition~\ref{prop:AR_1_coeff_greater_one}.
\end{proof}

\bibliographystyle{abbrvnat}
\bibliography{bib-AR}

\end{document}